\title[Diamonds and Dominoes]
    {Diamonds and Dominoes: Impossibility results for associative modal logics}
\keywords{Tiling, Undecidability, Hyperboolean modal logic, Semilattices, Modal logic, Complex algebras, Bunched logic, Associativity}
\subjclass{Primary: 03B25, 03B45, 06E25; Secondary: 03D10, 03G15, 06F05}
\author{Søren Brinck Knudstorp}
\address{ILLC \& Philosophy\\
University of Amsterdam\\
Amsterdam, the Netherlands}
\email{s.b.knudstorp@uva.nl}
\urladdr{https://knudstorp.github.io/}
\newtheorem{Theorem}{Theorem}[section]
\newtheorem{Proposition}[Theorem]{Proposition}
\newtheorem{Lemma}[Theorem]{Lemma}
\newtheorem{Corollary}[Theorem]{Corollary}
\theoremstyle{definition}
\newtheorem{Definition}[Theorem]{Definition}
\newtheorem{Remark}[Theorem]{Remark}
\newcommand{\Sup}{\langle \sup\rangle }
\newcommand{\Inf}{\langle \inf \rangle }
\DeclareMathOperator{\vvee}{\raisebox{0.3ex}{\scalebox{0.6}{$\backslash$}\hspace{-.035cm}\scalebox{0.6}{$\backslash$}\hspace{-.035cm}\scalebox{0.6}{$\slash$}}} %Inquisitive or Global disjunction
\newcommand\sepimp{\mathrel{-\mkern-6mu*}} %Magic wand
\begin{document}

\maketitle

\begin{center}
    \small{ \footnotesize This is a preprint of a manuscript currently under review.}
\end{center}

\begin{abstract}
    We show that for any class of Boolean algebras with an associative operator, if it contains the complex algebra of $(\mathcal{P}(\mathbb{N}), \cup)$, its equational theory is undecidable. Equivalently, any associative normal modal logic valid over the frame $(\mathcal{P}(\mathbb{N}), \cup)$ is undecidable. This settles a long-open question on the decidability of hyperboolean modal logic~\cite{GorankoVakarelov}, and addresses several related problems.
\end{abstract}

\section{Introduction}\label{introduction}

% \begin{proof} %how to end a proof with a box after equation.
% \begin{endproofeqnarray*}
%     f(x) &=& g(x) &\text{(by )} \\
% &=& h(x) &\text{obviously.}
% \end{endproofeqnarray*}
% \end{proof}

In 1951, Jónsson and Tarski \cite{JonssonTarski1951} introduced the concept of a Boolean algebra with operators (BAO) and developed representation theorems for these structures, marking the first step in the now well-understood connection between relational and algebraic semantics for modal logic (see \cite{bluebook, ChagrovZ97:ml} for detailed treatments).

Fast forward to the 1990s, Kurucz et al. \cite{Kurucz93,KuruczNSS95:jolli} studied the decidability of Boolean algebras with an associative binary operator. Among other results, by a reduction from the Post correspondence problem, they proved that the variety of associative BAOs---corresponding to the least associative normal modal logic $\mathbf{K}_2\oplus (p\circ q)\circ r\leftrightarrow p\circ (q\circ r)$---is undecidable, arguably making it the simplest modal logic known to be undecidable (see \cite{bluebook, ChagrovZ97:ml} for more on decidability and undecidability in modal logic).

The present research lies in continuation with the work of Kurucz and coauthors. By interpreting a domino problem, we prove that any variety of associative BAOs containing the complex algebra of $(\mathcal{P}(\mathbb{N}), \cup)$ is undecidable. In modal logical terms, this shows that any normal modal logic extending $\mathbf{K}_2\oplus (p\circ q)\circ r\leftrightarrow p\circ (q\circ r)$ and valid over the frame $(\mathcal{P}(\mathbb{N}), \cup)$ is undecidable. This result carries substantial implications, resolving several open questions, including the long-standing problem concerning the decidability of hyperboolean modal logic, as posed by Goranko and Vakarelov \cite{GorankoVakarelov}.  

The proof techniques developed here may also be of independent interest, having already found broader application. In \cite{Knudstorp2024}, similar ideas were applied to show that the relevant logic $\mathbf{S}$, introduced in \cite{Urquhart72:jsl, Urquhart73}, is undecidable. Likewise, the present proofs inspired recent unpublished joint work with Nick Galatos, Peter Jipsen, and Revantha Ramanayake \cite{GalatosJipsenKnudstorpRevantha:BIund}, establishing the undecidability of Bunched Implication logic (BI) \cite{OHearnP99:bsl}.

The paper is structured as follows. Section~\ref{sec:defining} defines the relevant logical, algebraic, and relational settings.%, and sets out two central systems: hyperboolean modal logic, algebraically corresponding to the variety generated by complex Boolean algebras, and the modal logic of semilattices, corresponding to the variety generated by complex semilattices. 

Section~\ref{sec:proofmethod} explains the general proof method, which is a reduction from the $\mathbb{N}\times\mathbb{N}$ domino (or tiling) problem. Given a finite set of Wang tiles $\mathcal{W}$---unit squares with a colour on each edge---the problem asks whether it is possible to tile the first quadrant $\mathbb{N}\times \mathbb{N}$ so that adjacent tiles match along their shared edges. 

The reduction involves constructing a formula $\phi^{}_\mathcal{W}$ for any such $\mathcal{W}$, and proving the following circular setup:
\begin{figure}[ht]

\begin{center}
\begin{tikzpicture}[
  node style/.style={
    font=\footnotesize,
    align=center,
    text width=3.6cm,
    minimum width=4cm,
    minimum height=1.6cm,
    draw,
    rectangle
  },
  arrow style/.style={font=\Large, inner sep=0pt}
]

% Triangle nodes with more compact rectangles
\node[node style] (A) at (0,2.5)
  {$\phi^{}_\mathcal{W}$ refuted by\\ an associative frame\\ (or associative BAO)};
\node[node style] (B) at (-3,0)
  {$\phi^{}_\mathcal{W}$ refuted by $(\mathcal{P}(\mathbb{N}), \cup)$\\ (or its complex algebra)};
\node[node style] (C) at (3,0)
  {$\mathcal{W}$ tiles\\ $\mathbb{N} \times \mathbb{N}$};

% Long Rightarrows rotated between nodes
\node at (0,0) {\(\Longleftarrow\)};
\node at (-1.5,1.25) [rotate=39.81] {\(\Longrightarrow\)};
\node at (1.5,1.25) [rotate=-39.81] {\(\Longrightarrow\)};
\end{tikzpicture}
  \label{fig:triangle}
\end{center}
\end{figure}

\noindent From this, and slight modifications, a number of results are derived, including: 
\begin{itemize}
    \item The undecidability of hyperboolean modal logic (Th.~\ref{th:hyp}).
    \item The undecidability of the modal logic of semilattices---or, algebraically, of the equational theory of complex semilattices---(Th.~\ref{th:MIL}), resolving questions raised by Bergman \cite{Bergman}, Jipsen et al. \cite{Jipsen2021}, and the author \cite{Knudstorp2022:thesis,Knudstorp2023:synthese}.
    \item The undecidability of modal logics over lattices studied by Wang and Wang \cite{WangWang2022, WangWang2025} (Th.~\ref{th:lat}).
    \item A new proof of undecidability for the least associative normal modal logic (Th.~\ref{th:associativemodallogic}).
    \item New proofs of undecidability for various versions of propositional separation logic and Boolean Bunched Implication logic (BBI) (Th.~\ref{th:BBI1},~\ref{th:BBI2},~\ref{th:BBI3}).
    \item That there can be no (computable) translation from modal information logic into truthmaker semantics (Th.~\ref{th:notranslation}), answering a question raised by van Benthem \cite{Benthem17:manus, Benthem2024:kripke}.
\end{itemize}

The circular setup follows from two key lemmas, Lemma~\ref{lm:tiling} and~\ref{lm:tilingpowerset}. Their proofs are deferred to Section~\ref{sec:tilinglemmas}, which thereby constitutes the core of the paper.
%Section~\ref{sec:tilinglemmas} forms the core of the paper, containing the main proof ideas through two key lemmas, Lemma~\ref{lm:tiling} and~\ref{lm:tilingpowerset}, which together imply the results of Section~\ref{sec:proofmethod}. 

The main contributions of this paper are its results and proof ideas, yet a more foundational objective of this research is to deepen our understanding of the boundary between the solvable and the unsolvable. To that end, the final section, Section~\ref{sec:features}, compares the achieved undecidability results with known decidability results from related frameworks, including propositional team semantics \cite{YangVäänänen2017}, truthmaker semantics \cite{FineJ19:rsl, Knudstorp2023:synthese}, and modal information logic \cite{Benthem96, Knudstorp2023:jpl}.

%and comparing them with known decidability results, with the broader aim of identifying traits of (un)decidable logics and varieties

\section{Preliminaries}\label{sec:defining}
Algebraically, our setting is varieties of Boolean algebras with an associative operator $\circ$ (associative BAOs), also known as \textit{Boolean semigroups}: Boolean algebras equipped with an associative binary operator $\circ$ that distributes over all finite (including empty) joins. Logically, varieties of associative BAOs correspond to normal extensions of the least associative normal modal logic. 

Formally, we define as follows. 

\begin{Definition}[Logics]
    Let $\mathbf{K}_2$ denote the least normal modal logic over the language $\mathcal{L}$ generated by the grammar:
    \begin{align*}
        \varphi\mathrel{::=} p \mid \neg\varphi \mid \varphi \lor \varphi \mid \varphi \circ \varphi,
    \end{align*}
    where $\circ$ is a binary diamond and $p\in \mathsf{Prop}$ for $\mathsf{Prop}$ a countable set of propositional variables. The connectives $\land$, $\to$, $\leftrightarrow$, and the constants $\bot, \top$ are defined as usual. 
    
    By $\mathbf{AK}_2$ we denote the least normal modal logic extending $\mathbf{K}_2$ that contains the associativity axiom for $\circ$: 
        $$(p \circ q) \circ r \leftrightarrow p \circ (q \circ r).$$ 
    That is,
        $$\mathbf{AK}_2\mathrel{:=}\mathbf{K}_2 \oplus (p \circ q) \circ r \leftrightarrow p \circ (q \circ r).$$
    We say that a normal modal logic $\mathbf{L}$ is \textit{associative} if it contains the associativity axiom, i.e., if $\mathbf{L}\supseteq \mathbf{AK}_2$.
\end{Definition}

\begin{Definition}[Algebras and varieties]
    An algebra $\mathfrak{A}=(A, \lor, \land, \neg, \bot, \top, \circ)$ is an \textit{associative BAO} (also referred to as a \textit{Boolean semigroup}) if $(A, \lor, \land, \neg, \bot, \top)$ is a Boolean algebra and $\circ:A\times A\to A$ is an associative binary operator, i.e., the following holds for all $x,y,z\in A$:
\begin{itemize}
    \item Additivity: $x\circ (y\lor z)=(x\circ y)\lor (x\circ z)$ and $(x\lor y)\circ z=(x\circ z)\lor (y\circ z)$
    \item Normality: $x\circ \bot = \bot \circ x=\bot$
    \item Associativity: $(x\circ y)\circ z= x\circ (y\circ z).$
\end{itemize}
Note that additivity and normality amount to distribution of $\circ$ over all finite joins.\footnote{Be aware that we assume normality (distribution over the empty join). 
In some contexts, operators of BAOs are only assumed to distribute over \textit{non-empty} finite joins, and the modifier `normal' is reserved for when distribution extends to the empty join.} We write $\mathsf{BSg}$ for the class of all associative BAOs (Boolean semigroups).

A \textit{variety} of associative BAOs is a class $\mathsf{K}$ of such algebras closed under homomorphic images, subalgebras, and direct products. By $\mathsf{Var}(\mathsf{K})$ we denote the {variety generated by a class} $\mathsf{K}$ of associative BAOs.
\end{Definition}

\textit{Terms} $\mathsf{t}$, \textit{equations} (or \textit{identities}) $\mathsf{t}\approx \mathsf{s}$, and \textit{validity of an equation} $\mathsf{t}\approx \mathsf{s}$ in a class $\mathsf{K}$ of associative BAOs, written $\mathsf{K}\vDash \mathsf{t}\approx \mathsf{s}$, are defined in the standard way. Recall also that, by Birkhoff's theorem, a class of algebras is a variety if and only if it is equationally definable. It follows that the class of all associative BAOs, $\mathsf{BSg}$, is a variety.
\\\\
By algebraic completeness, every associative normal modal logic $\mathbf{L}$ is sound and complete with respect to the variety $\mathsf{V}_\mathbf{L}$ of associative BAOs defined by:
    $$\mathsf{V}_\mathbf{L}\mathrel{:=}\{\mathfrak{A}\mid \mathfrak{A}\vDash \varphi\approx\top, \text{ for all }\varphi\in\mathbf{L}\}.$$
Moreover, the mapping $\mathbf{L}\mapsto \mathsf{V}_\mathbf{L}$ constitutes a dual isomorphism between the lattice of normal extensions of $\mathbf{AK}_2$ and the lattice of subvarieties of $\mathsf{BSg}$, with inverse given by:
    $$\mathsf{V}\mapsto \mathbf{AK}_2\oplus \{\varphi\mid \mathsf{V}\vDash \varphi\approx\top\}.$$
For our purposes, we highlight the following:

\begin{Proposition}
    Let $\mathbf{L}$ be an associative normal modal logic, $\varphi\in \mathcal{L}$ a formula, and $\mathsf{t}\approx\mathsf{s}$ an equation. Then:
    \begin{itemize}
        \item $\varphi\in\mathbf{L}$ if and only if $\mathsf{V}_\mathbf{L}\vDash \varphi\approx\top$
        \item $\mathsf{V}_\mathbf{L}\vDash \mathsf{t}\approx\mathsf{s}$ if and only if $\mathsf{t}\leftrightarrow \mathsf{s}\in \mathbf{L}$.
    \end{itemize}
    In particular, $\mathbf{L}$ is decidable (i.e., the problem of determining whether $\varphi \in \mathbf{L}$ for $\varphi \in \mathcal{L}$ is decidable) if and only if the equational theory of $\mathsf{V}_\mathbf{L}$ is.
\end{Proposition}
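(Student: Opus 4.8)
The plan is to derive all three claims from the soundness-and-completeness of $\mathbf{L}$ with respect to $\mathsf{V}_\mathbf{L}$ recorded just above, handling the two displayed equivalences first and then reading off the decidability corollary as a pair of computable interreductions. Throughout I would exploit that formulas of $\mathcal{L}$ and terms over the signature of associative BAOs are literally the same syntactic objects (the primitive $\neg,\lor,\circ$ together with the defined $\land,\to,\leftrightarrow,\bot,\top$ correspond to the algebraic operations), so that any assignment of the propositional variables to elements of an algebra $\mathfrak{A}$ interprets a formula $\varphi$ as an element $\varphi^\mathfrak{A}\in A$. This identification is what makes ``$\varphi\in\mathbf{L}$'' and ``$\mathsf{V}_\mathbf{L}\vDash\varphi\approx\top$'' comparable in the first place.

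For the first bullet, the left-to-right direction is soundness and is immediate from the definition of $\mathsf{V}_\mathbf{L}$: if $\varphi\in\mathbf{L}$ then by definition every $\mathfrak{A}\in\mathsf{V}_\mathbf{L}$ satisfies $\varphi\approx\top$. For the converse (completeness) I would invoke the Lindenbaum--Tarski construction: the relation $\varphi\equiv_\mathbf{L}\psi$ defined by $(\varphi\leftrightarrow\psi)\in\mathbf{L}$ is a congruence on the formula/term algebra---reflexivity, symmetry and transitivity, and compatibility with $\neg,\lor$ come from $\mathbf{L}$ being a classical normal modal logic, while compatibility with $\circ$ comes from replacement of equivalents, a derivable rule in any normal modal logic over this signature---so the quotient $\mathfrak{A}_\mathbf{L}$ is a well-defined associative BAO whose theorems are exactly the $\mathbf{L}$-provable formulas, giving $\mathfrak{A}_\mathbf{L}\in\mathsf{V}_\mathbf{L}$. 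If $\varphi\notin\mathbf{L}$ then $[\varphi]\neq[\top]=\top$ in $\mathfrak{A}_\mathbf{L}$, witnessing $\mathsf{V}_\mathbf{L}\not\vDash\varphi\approx\top$. (Since this is precisely the algebraic completeness already cited, one may alternatively just reference it.)

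The second bullet I would reduce to the first via the elementary Boolean fact that, in any Boolean algebra, $a=b$ iff $a\leftrightarrow b=\top$, where $a\leftrightarrow b:=(\neg a\lor b)\land(\neg b\lor a)$. Applying this under every assignment gives $\mathsf{V}_\mathbf{L}\vDash\mathsf{t}\approx\mathsf{s}$ iff $\mathsf{V}_\mathbf{L}\vDash(\mathsf{t}\leftrightarrow\mathsf{s})\approx\top$, which by the first bullet holds iff $\mathsf{t}\leftrightarrow\mathsf{s}\in\mathbf{L}$. Finally, for the decidability corollary I would observe that the two equivalences furnish computable many-one reductions in both directions between the membership problem $\{\varphi:\varphi\in\mathbf{L}\}$ and the equational theory $\{(\mathsf{t},\mathsf{s}):\mathsf{V}_\mathbf{L}\vDash\mathsf{t}\approx\mathsf{s}\}$: the map $\varphi\mapsto(\varphi,\top)$ reduces the former to the latter, and $(\mathsf{t},\mathsf{s})\mapsto\mathsf{t}\leftrightarrow\mathsf{s}$ reduces the latter to the former. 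Both maps are plainly computable, so one problem is decidable precisely when the other is. The only step demanding genuine care---and the closest thing here to an obstacle---is the completeness half of the first bullet, namely verifying that $\equiv_\mathbf{L}$ really is a congruence (in particular that it is compatible with $\circ$) so that $\mathfrak{A}_\mathbf{L}$ is a well-defined associative BAO lying in $\mathsf{V}_\mathbf{L}$; everything else is bookkeeping.
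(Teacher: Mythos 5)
Your proposal is correct and matches the paper's route: the paper states this Proposition without a separate proof, deriving it from the algebraic soundness-and-completeness of $\mathbf{L}$ with respect to $\mathsf{V}_\mathbf{L}$ asserted just beforehand, and your argument is exactly the standard unpacking of that fact (Lindenbaum--Tarski quotient for completeness, the Boolean identity $a=b$ iff $a\leftrightarrow b=\top$ for the second bullet, and the two evident computable reductions for decidability).
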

Thus, we have the promised correspondence between (decidability of) associative normal modal logics and (decidability of) varieties of associative BAOs.
\\\\
To prove our undecidability results, we proceed via the dual Kripke-semantic perspective. We use that, by canonicity of the associativity axiom, the variety of associative BAOs corresponds to the class of all associative Kripke frames. That is, $\mathbf{AK}_2=\mathbf{K}_2\oplus (p\circ q)\circ r\leftrightarrow p\circ (q\circ r)$ is sound and complete with respect to the class of associative Kripke frames, defined as follows:

\begin{Definition}[Frames and models]\label{def:associativeframe}
    An \textit{associative frame} is a pair $\mathbb{F}=(X,R)$ where $R\subseteq X^3$ is an associative ternary relation, i.e., for all $x,a,b,c\in X$,
    \begin{align*}
        Rx(ab)c \text{ iff } Rxa(bc).
    \end{align*}
    Here, $Rx(ab)c$ means that there is $y \in X$ such that $Rxyc$ and $Ryab$, and $Rxa(bc)$ that there is $z \in X$ such that $Rxaz$ and $Rzbc$.\footnote{For a pictorial presentation
    of associativity, and how it relates to tiling and undecidability, see Remark~\ref{rm:AssociativityandTiling}.} In addition to the prefix notation $Rxyz$, we occasionally use the infix $xRyz$ for $(x,y,z)\in R$.
    
    %Here, $Rx(ab)c$ is short for $\exists y\in X (Rxyc \text{ and } Ryab)$, and $Rxa(bc)$ for $\exists z\in X (Rxaz \text{ and } Rzbc)$. In addition to the prefix notation $Rxyz$, we will occasionally write $xRyz$ for $(x,y,z)\in R$.

    A \textit{model} is a triple $\mathbb{M}=(X,R, V)$ where $(X,R)$ is an associative frame, and $V$ is a \textit{valuation} on $X$, i.e., a function $V:\mathsf{Prop}
\to \mathcal{P}(X)$. 
\end{Definition}

\begin{Remark}\label{rm:operationalperspective}
    Some authors define an associative frame as a pair $\mathbb{F}=(X,\cdot)$ where $\cdot:X\times X\to \mathcal{P}(X)$ is an associative \textit{binary operation}, rather than a ternary relation. This formulation makes the use of the term \textit{associativity} more transparent: associativity of $\cdot$ amounts to the equality
    \begin{align*}
        (x\cdot y)\cdot z=x\cdot (y\cdot z),
    \end{align*}
 where $\cdot$ is lifted to sets $Y,Z\subseteq X$ by
    \begin{align*}
        Y\cdot Z\mathrel{:=}\{x\in X\mid \exists y\in Y, z\in Z:x\in y\cdot z\},\quad Y\cdot z\mathrel{:=}Y\cdot\{z\},\quad y\cdot Z\mathrel{:=}\{y\}\cdot Z.
    \end{align*}
    On the other hand, an advantage of the ternary-relation presentation is that it aligns directly with modal logic. For instance, canonicity of the associativity axiom is then immediate by it having the above first-order correspondent.

    In any case, the definitions are equivalent: a ternary relation $R\subseteq X^3$ induces a binary operation $\cdot:X^2\to \mathcal{P}(X)$ by letting $x\in y\cdot z$ iff $Rxyz$. Conversely, a binary operation $\cdot:X^2\to \mathcal{P}(X)$ induces a ternary relation $R\subseteq X^3$ defined by $R\mathrel{:=}\{(x,y,z)\in X^3\mid x\in y\cdot z\}$. The transformations are inverse to each other, and $R\subseteq X^3$ is associative if and only if the corresponding operation $\cdot:X^2\to \mathcal{P}(X)$ is.
\end{Remark}
Note that we have defined associativity from the perspective of the first coordinate; that is, $Rx(ab)c$ holds iff $Rxa(bc)$ does. This choice is also a matter of convention. It contrasts with, e.g., the literature on relevant logic, where associativity is typically defined in terms of the third coordinate: $R(ab)cx$ iff $Ra(bc)x$, but aligns with the usual Kripke semantics for unary diamonds and boxes, which will become clear as we now provide the Kripke-semantic clause for the binary diamond $\circ$.

\begin{Definition}[Frame semantics]\label{def:semantics}
    Given a model $\mathbb{M}=(X,R, V)$ and an $x\in X$, we say that a formula $\varphi\in\mathcal{L}$ is \textit{satisfied} at $x$ (in $\mathbb{M}$), and write $\mathbb{M}, x\Vdash \varphi$ or simply $x\Vdash\varphi$, according to the following recursive clauses:
    \begin{align*}
        &\mathbb{M}, x\Vdash p && \textbf{iff} && x\in V(p),\\
        &\mathbb{M}, x\Vdash \neg\varphi && \textbf{iff} && \mathbb{M}, x\nVdash \varphi,\\
        &\mathbb{M}, x\Vdash \varphi\lor\psi && \textbf{iff} && \mathbb{M}, x\Vdash \varphi \text{ or } \mathbb{M}, x\Vdash \psi,\\
        &\mathbb{M}, x\Vdash \varphi\circ\psi && \textbf{iff} && \text{there exist $y,z\in X$ such that } \mathbb{M}, y\Vdash \varphi; \\
        & && &&\mathbb{M}, z\Vdash \psi; \text{ and $Rxyz$.}
    \end{align*}
    % We write $||\varphi||=\{w\mid \mathbb{M}, w\Vdash \varphi\}$ for the set of $w\in W$ that satisfies $\varphi$. Observe that $||\varphi\circ \psi||=||\varphi||\cdot_R ||\psi||.$    
    We say that a formula $\varphi\in \mathcal{L}$ is \textit{valid in a frame} $\mathbb{F}=(X,R)$, and write $\mathbb{F}\Vdash \varphi$, if for every model $\mathbb{M}=(X, R, V)$ based on $\mathbb{F}$ and every $x\in X$, we have $\mathbb{M}, x\Vdash \varphi$. It is \textit{valid in a class of frames} $\mathsf{C}$, written $\mathsf{C}\Vdash\varphi$, if it is valid in every frame $\mathbb{F}\in \mathsf{C}$; and we define \textit{the logic of a class of frames} $\mathsf{C}$, $\mathrm{Log}(\mathsf{C})$, as the set of formulas valid in $\mathsf{C}$, i.e., $\mathrm{Log}(\mathsf{C})\mathrel{:=}\{\varphi\in\mathcal{L}\mid \mathsf{C}\Vdash\varphi\}$. 
\end{Definition}
For completeness, denoting the class of associative frames by $\mathsf{A}$, let us make explicit that $\mathbf{AK}_2=\mathrm{Log}(\mathsf{A})$, which, as noted earlier, is a consequence of canonicity of the associativity axiom. 
\begin{Remark}
    If we present models $\mathbb{M}$ from the operational perspective, so $\mathbb{M}=(X,\cdot, V)$, the $\circ$-clause takes the form:
    \begin{align*}
        &\mathbb{M}, x\Vdash \varphi\circ\psi && \textbf{iff} && \text{there exist $y,z\in X$ such that } \mathbb{M}, y\Vdash \varphi; \\
        & && &&\mathbb{M}, z\Vdash \psi; \text{ and $x\in y\cdot z$.}
    \end{align*}
    When $\cdot:X^2\to \mathcal{P}(X)$ is functional (in that $y\cdot z$ always is a singleton), we may treat it as a map $\cdot:X^2\to X$ and write $x= y\cdot z$ instead of $x\in y\cdot z$. The induced lifting to sets $\cdot : \mathcal{P}(X) \times \mathcal{P}(X) \to \mathcal{P}(X)$, as defined in Remark~\ref{rm:operationalperspective} and called a \textit{complex operation}, then becomes
    \[
        Y \cdot Z = \{y \cdot z \mid y \in Y, z \in Z\}.
    \]
\end{Remark}
Given an associative frame $\mathbb{F}=(X,R)$, the \textit{complex algebra} of $\mathbb{F}$ is defined as 
\[
    \mathbb{F}^+ = (\mathcal{P}(X), \cup, \cap, ^c, \varnothing, X, \cdot),
\]
where $\cdot:\mathcal{P}(X)\times\mathcal{P}(X) \to\mathcal{P}(X)$ is the induced lifting, the complex operation, given by
% \[
%     Y \cdot Z := \{x \in X \mid \exists y \in Y, z \in Z: Rxyz\}.
% \]
\[
    Y \cdot Z := \{x \in X \mid \text{there are } y \in Y, z \in Z: Rxyz\}.
\]
Observe that $\mathbb{F}^+$ is an associative BAO. For a class of frames $\mathsf{C}$, we write $\mathsf{C}^+$ for the class of complex algebras of frames in $\mathsf{C}$. We also note, for completeness, that for any class of associative frames $\mathsf{C}$, 
\[
    \mathsf{V}_{\mathrm{Log}(\mathsf{C})} = \mathsf{Var}(\mathsf{C}^+),
\]
and in particular, $\mathsf{V}_{\mathbf{AK}_2}=\mathsf{Var}(\mathsf{A}^+)=\mathsf{BSg}$.
\\\\
With this, we conclude the preliminaries by setting out two central systems; additional systems will be presented alongside the undecidability results in the next section.

First, using the binary-operation perspective $(X, \cdot)$, a natural subclass of associative frames is the class of semilattices, $\mathsf{SL}$, where $\cdot$ is functional (i.e., $\cdot: X^2 \to X$) and associative, commutative, and idempotent. Note that on semilattices, a point $x$ satisfies $\varphi\circ \psi$ if and only if it is the join (/meet) of points $y$ and $z$ satisfying $\varphi$ and $\psi$, respectively. The variety generated by complex algebras of semilattices, $\mathsf{Var}(\mathsf{SL}^+)$, is a variety of associative BAOs and has been studied by Bergman \cite{Bergman}, who, among more, raises the question of its decidability (Problem 4.3)---a question also noted by Jipsen et al. \cite{Jipsen2021}. From the logical perspective, the corresponding logic, $\mathrm{Log}(\mathsf{SL})$, aptly described as the modal logic of semilattices, has been studied by the author in connection with modal information logic and truthmaker semantics \cite{Knudstorp2022:thesis, Knudstorp2023:synthese} (more on this in the next section and again in the final section). There, the same decidability problem was raised, posed in logical terms as whether modal information logic over semilattices is decidable.

Second, the complex algebra of a Boolean algebra $(B, \lor, \land, \neg, \bot, \top)$ yields a structure  
    $$(\mathcal{P}(B), \cup, \cap, ^c, \varnothing, B, \lor, \land, \neg, \bot, \top)$$ 
with two layers of operators: the (\textit{outer} or \textit{external}) Boolean operators $(\cup, \cap, ^c, \varnothing, B)$, as well as the (\textit{inner} or \textit{internal}) complex operators $(\lor, \land, \neg, \bot, \top)$, where, for $X,Y\in \mathcal{P}(B)$, we have 
    $$X\lor Y\mathrel{:=}\{x\lor y\mid x\in X, y\in Y\},$$
and similarly for the other operators. Observe that the reduct $(\mathcal{P}(B), \cup, \cap, ^c, \varnothing, B, \lor)$ of a complex Boolean algebra is, in particular, an associative BAO; and similarly that, given a Boolean algebra $(B, \lor, \land, \neg, \bot, \top)$, the join-reduct $(B, \lor)$ (or meet-reduct $(B, \land)$) is an associative \textit{frame}, where, again, relative to a valuation, a point $x\in B$ satisfies $\varphi\circ \psi$ if and only if it is the join (meet) of points $y$ and $z$ satisfying $\varphi$ and $\psi$, respectively. Also, note that $(B, \lor)^+=(\mathcal{P}(B), \cup, \cap, ^c, \varnothing, B, \lor)$.

Complex Boolean algebras, referred to as \textit{hyperboolean algebras} by Goranko and Vakarelov \cite{GorankoVakarelov}, have a modal analogue in the logic termed \textit{hyperboolean modal logic}, which treats Boolean algebras as Kripke frames and includes modalities corresponding to each Boolean operation, not only the join (/meet). The decidability of this logic was already posed as an open problem in \cite{GorankoVakarelov} and has remained unresolved, with the question raised again by van Benthem \cite{Benthem2024:kripke}, who brought it to the author's attention.

\begin{Remark}
    More recently, Engström and Olsson \cite{EngstromOlsson2023} have examined hyperboolean modal logic from an entirely different perspective, introducing it as a unifying framework for propositional team logics under the name \textit{the Logic of Teams} (LT). This connection to team semantics is particularly relevant for our purposes, as it sheds light on the boundary between the decidable and undecidable---a topic we turn to in the final section. 
\end{Remark}
These are the systems and questions that initially motivated this line of undecidability research, the impact of which, however, reaches further. 

%Bergman also asks whether $\mathsf{Var}(\mathsf{SL}^+)$ is generated by its finite members (Problem 4.4); as it is not decidable, it almost follows that it isn't (because it is recursively enumerable, and if it is generated by an r.e. set of finite members then it would be co-r.e.) [it may be that this would follow from the axioms and FO-conditions I consider in my extended work on $MIL_{Sem}$]. 
%Should I end with a duality dictionary? Cf. my handout, I can maybe do some copy-paste. I could also end with an updated one in the conclusion, where I also tick off whether they are undecidable or not.
%Maybe I should also mention that the fusion clause of the modality may be familiar to many, as it is the same as the one employed in relevant logic, truthmaker semantics, team semantics, etc. Yes, probably I should, and maybe already earlier, right after I have introduced the semantic clauses. Then I can also continue by hinting at how these connections will help illuminate the border between the solvable and unsolvable. And mention that we will get back to this; for instance, we will also get back to team logics and their decidability.

\section{Method and scope}\label{sec:proofmethod}
To establish our undecidability results, we employ a reduction from the tiling problem, introduced by Wang \cite{Wang1963} and shown to be undecidable by Berger \cite{Berger}. The problem is formulated in terms of Wang tiles---unit squares with a colour on each edge---and asks whether, given a finite set of tiles $\mathcal{W}$, it is possible to cover the quadrant $\mathbb{N}\times \mathbb{N}$ so that adjacent tiles match along their shared edges. The formal definition is as follows.

\begin{Definition}[Wang tiling]\label{def:WangTiles}
    A \textit{(Wang) tile} is a 4-tuple $t = (t_u, t_d, t_l, t_r) \in \omega^4$, representing a unit square with edge colours: \( t_u \) (up), \( t_d \) (down), \( t_l \) (left), and \( t_r \) (right).
    
    Given a finite set of tiles $\mathcal{W}$, a \textit{tiling} of the quadrant $\mathbb{N}^2$ is a function $\tau:\mathbb{N}^2\to \mathcal{W}$ such that for all $(m,n)\in \mathbb{N}^2$:
    \begin{itemize}
        \item \( \tau(m,n)_r = \tau(m+1,n)_l \) (adjacent tiles match on horizontal edges),
        \item \( \tau(m,n)_u = \tau(m,n+1)_d \) (adjacent tiles match on vertical edges).
    \end{itemize}
    The \textit{tiling problem} asks whether such a tiling exists for a given finite set $\mathcal{W}$.
\end{Definition}
It was shown to be undecidable by Berger \cite{Berger}.

\begin{Theorem}[\cite{Berger}]\label{th:tilingproblem}
    There is no Turing machine that, given the specifications of an arbitrary finite set of Wang tiles $\mathcal{W}$, decides whether $\mathcal{W}$ tiles $\mathbb{N}^2$.
\end{Theorem}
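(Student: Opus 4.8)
The plan is to reduce from the halting problem for Turing machines by reading a tiling as the space-time diagram of a computation. Two preliminary observations organise the argument. First, by a standard compactness (K\"onig's lemma) argument, a finite tile set $\mathcal{W}$ tiles the quadrant $\mathbb{N}^2$ if and only if it tiles arbitrarily large squares, and hence if and only if it tiles the whole plane $\mathbb{Z}^2$; so it suffices to establish undecidability for $\mathbb{Z}^2$. Second, undecidability is comparatively easy once one is allowed to \emph{constrain the origin} (require a designated tile at a fixed cell) and much harder for the unconstrained problem---the removal of this constraint being exactly Berger's \cite{Berger} key contribution.

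For the encoding, fix a deterministic machine $M$ and build a tile set $\mathcal{W}_M$ whose tilings are space-time diagrams of $M$: each horizontal row records a full configuration, with vertical edge colours carrying either a bare tape symbol $a$ or a pair $(q,a)$ marking the head, in state $q$, on a cell holding $a$, while horizontal edge colours carry the signals that pass the head between neighbouring cells. For every tape symbol one includes a \emph{copying} tile reproducing $a$ from bottom to top in the absence of a head signal, and for every transition $(q,a)\mapsto(q',b,D)$ one includes \emph{transition} tiles that overwrite the vertical colour and emit the head marker one cell to the left or right according to $D$. The matching conditions then force each row to be precisely the successor configuration of the row below, so that an infinite upward stack of rows is a non-halting run; since halting states are given no outgoing transition tile, a completed tiling exists exactly when the simulated computation never halts.

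In the origin-constrained setting this already yields the theorem: the designated tile seeds the initial (blank-tape, state-$q_0$) configuration, and $\mathcal{W}_M$ tiles the quadrant iff $M$ does not halt on blank input, which is undecidable. The genuine obstacle---and the step I expect to be hardest---is the \emph{unconstrained} problem, where boundary and corner edges impose no colour constraint and so no bottom row can be forced directly. Following Berger, one overlays a second, \emph{aperiodic} (hierarchical, Robinson-style) layer of tiles whose every tiling is forced into a hierarchy of nested $2^k\times 2^k$ squares; these squares supply anchors that seed independent computations of unbounded length, so that $M$ halting within any bounded number of steps would obstruct all sufficiently large squares and hence every tiling, whereas $M$ running forever permits a global tiling. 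Constructing this aperiodic set and correctly synchronising it with the computation layer is the ingenious and technically demanding core; granting it, undecidability of the $\mathbb{Z}^2$ problem follows, and the compactness equivalence transfers it to $\mathbb{N}^2$.
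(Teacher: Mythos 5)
The paper does not prove this statement at all: it is imported wholesale as Berger's theorem, with \cite{Berger} as the proof. So there is no internal argument to compare yours against; the relevant comparison is with the standard literature proof, and your outline does follow it faithfully. Your preliminary reductions are correct: the K\"onig's-lemma equivalence between tiling $\mathbb{N}^2$, tiling arbitrarily large squares, and tiling $\mathbb{Z}^2$ is standard and sound, and your space-time-diagram encoding (vertical colours carrying $a$ or $(q,a)$, horizontal colours carrying head-passing signals, copying tiles plus transition tiles, halting states with no successor tile) is exactly the classical simulation, with the direction of the reduction right: $\mathcal{W}_M$ tiles iff $M$ does not halt, which suffices for undecidability. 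You also correctly identify the crux --- that without an origin constraint nothing forces a computation to appear anywhere (the all-blank copying rows tile the plane for every $M$), and that Berger's contribution is precisely the aperiodic hierarchical layer that removes this constraint.

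That said, as a proof rather than a roadmap, there is a genuine gap, and you flag it yourself with ``granting it'': the construction of the aperiodic tile set and its synchronisation with the computation layer is not an implementation detail but the mathematical content of the theorem (it occupies essentially all of Berger's memoir, and Robinson's later simplification is still substantial). Two points in your sketch would need real work. First, you need that \emph{every} tiling by the combined set contains arbitrarily large anchored squares each seeding a long-enough computation --- this is what makes a halting $M$ obstruct all tilings, and it requires proving aperiodicity-with-structure, not just asserting nested $2^k\times 2^k$ squares. Second, the converse direction is not automatic: when $M$ runs forever you must exhibit one consistent global tiling in which the infinitely many computations demanded by the hierarchy coexist; in Robinson's construction this works only because the computation regions attached to squares of different levels are carefully arranged on disjoint subgrids, and your sketch is silent on this interleaving. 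Within the economy of the present paper none of this matters --- the result is correctly cited as known --- but a self-contained proof would stand or fall on exactly those two constructions.
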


We reduce the tiling problem to our setting by computably constructing, for each $\mathcal{W}$, a formula $\phi^{}_{\mathcal{W}}\in \mathcal{L}$ such that $\phi^{}_{\mathcal{W}}$ is valid if and only if $\mathcal{W}$ fails to tile the quadrant.

To extend this result across a range of systems, we divide this biimplication into two lemmas, each proving one direction. In the remainder of this section, we state these lemmas, show that they suffice for undecidability, and highlight their consequences. Proofs of these tiling lemmas, along with the definition of the tiling formulas $\phi^{}_{\mathcal{W}}$, are deferred to the next section.

\begin{Lemma}\label{lm:tiling}
    If $\mathcal{W}$ does not tile $\mathbb{N}^2$, then $\phi^{}_\mathcal{W}$ is valid in the least associative normal modal logic (i.e., $\mathsf{BSg}\vDash \phi^{}_\mathcal{W}\approx \top$).
    %If $\phi^{}_\mathcal{W}$ is refuted by an associative frame, then $\mathcal{W}$ tiles $\mathbb{N}^2$.
\end{Lemma}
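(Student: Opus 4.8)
The plan is to prove the contrapositive. Suppose $\phi_{\mathcal{W}}$ is refuted in $\mathsf{BSg}$, i.e.\ $\mathsf{BSg}\not\vDash \phi_{\mathcal{W}}\approx\top$. Since $\mathsf{BSg}=\mathsf{Var}(\mathsf{A}^+)$ and equations transfer between a class and the variety it generates, $\phi_{\mathcal{W}}$ is then refuted in the complex algebra of some associative frame, equivalently falsified in some associative model $\mathbb{M}=(X,R,V)$ at some point $r$. I will extract from $\mathbb{M}$ an honest tiling $\tau\colon\mathbb{N}^2\to\mathcal{W}$. To this end I first fix the design of $\phi_{\mathcal{W}}$: its negation $\neg\phi_{\mathcal{W}}$ is a conjunction of local tiling constraints, built from a propositional variable $p_t$ for each tile $t\in\mathcal{W}$ together with the binary diamond $\circ$, asserting that $r$ sits at the origin of a grid in which every grid point carries exactly one tile and possesses both a right-neighbour and an up-neighbour whose tiles match it in the appropriate colour. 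Thus $r\Vdash\neg\phi_{\mathcal{W}}$ furnishes precisely the raw data of a grid with locally correct matching.

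First I would read off, from the satisfaction clause for $\circ$ applied to the conjuncts of $\neg\phi_{\mathcal{W}}$, witnessing points and partial successor maps: each point $x$ realising a grid position yields witnesses for its horizontal and vertical neighbours together with the tiles they carry. Iterating these along a fixed canonical path — say $m$ right-steps followed by $n$ up-steps from $r$ — produces a point $x_{m,n}$ for every $(m,n)\in\mathbb{N}^2$, and I set $\tau(m,n)$ to be the unique tile $t$ with $x_{m,n}\Vdash p_t$. Totality of $\tau$ and uniqueness of the tile are immediate from the existence-of-neighbour and uniqueness conjuncts, and the matching condition in the direction of the final leg of the canonical path (here the vertical one) is immediate as well, since there $x_{m,n+1}$ is literally the up-successor of $x_{m,n}$.

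The crux is the transverse matching condition $\tau(m,n)_r=\tau(m+1,n)_l$, which does not lie along the canonical path: here $x_{m+1,n}$ is reached by inserting a right-step \emph{before} the up-steps, whereas the right-neighbour of $x_{m,n}$ inserts it \emph{after}. Establishing their compatibility amounts to showing that horizontal and vertical steps commute at the level of tiles — informally, that ``go right then up'' and ``go up then right'' agree. This is exactly where associativity of $R$ is indispensable: the law $Rx(ab)c\leftrightarrow Rxa(bc)$ lets me re-bracket the nested $\circ$-decomposition that witnesses a unit square and thereby slide a vertical step past a horizontal one, forcing the two corners of the square to carry compatible tiles. I expect the main obstacle to be precisely this step: converting the single re-association identity into the full two-dimensional confluence of the successor maps and propagating it by induction on $m+n$ across the entire quadrant. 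Once that confluence is in hand, both matching conditions hold, $\tau$ is a genuine tiling of $\mathbb{N}^2$, and the contrapositive — hence the lemma — follows.
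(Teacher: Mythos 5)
Your skeleton agrees with the paper's: argue contrapositively, pass from $\mathsf{BSg}\nvDash\phi_{\mathcal{W}}\approx\top$ to an associative Kripke model refuting $\phi_{\mathcal{W}}$ at a point, and extract a tiling, with associativity supplying the missing corners of unit squares exactly as pictured in Remark~\ref{rm:AssociativityandTiling}. But there are two genuine gaps, both sitting at the step you yourself flag as the crux. First, your hypothesized formula imposes only \emph{existential} constraints (``every grid point possesses a right- and an up-neighbour with matching tiles''), and you never explain how these constraints reach points at unbounded distance from the refuting point: a fixed $\mathcal{L}$-formula has bounded modal depth, so in $\mathbf{K}_2$ alone nothing asserted at $r$ can govern the $(m+n)$-fold iterated witnesses. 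The paper needs a concrete device here: the defined operator $\Box$ of Remark~\ref{rm:pastlooking}, whose underlying relation $S$ is transitive \emph{precisely because of associativity}, acts as a master modality, so a single $\Box$-prefix at $z$ constrains the whole grid. Second, and more seriously, the confluence step fails as you describe it. Associativity applied to a right-step $p\,R\,x'\,q$ and an up-step $q\,R\,q''\,y'$ does yield a fourth corner $d$ with $p\,R\,d\,y'$ and $d\,R\,x'\,q''$ --- but $d$ is just \emph{some} point of the model; if the neighbour constraints are existential, $d$ need not be among your chosen witnesses, and nothing forces it to carry any tile at all, let alone a matching one. This is why in the paper the grid steps are \emph{decompositions} guarded by the marker letters $\mathtt{x'},\mathtt{y'}$, and why the matching constraints are \emph{universal} over decompositions, via $\varphi\hookrightarrow\psi=\neg(\varphi\circ\neg\psi)$ and $\psi\hookleftarrow\varphi=\neg(\neg\psi\circ\varphi)$: any corner that associativity produces is then automatically subject to the constraints.

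A further missing mechanism: even with universal constraints one cannot force a decomposition step to genuinely \emph{move} (in the intended frame $(\mathcal{P}(\mathbb{N}),\cup)$ one can have $Z=\{n\}\cup Z'$ with $Z'=Z$). This is why $\phi_{\mathcal{W}}$ carries the parity letters $\mathtt{x_e},\mathtt{x_o},\mathtt{y_e},\mathtt{y_o}$, why each $\gamma$-formula contains a ``parity unchanged'' disjunct, and why the paper's proof runs a staircase-then-fill construction followed by a diagonal induction (from $p_{m',n'}$ \emph{and} $p_{m'+1,n'+1}$ down to $p_{m',n'+1}$ and $p_{m'+1,n'}$) to exclude the degenerate disjunct using $\beta_2$. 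Note also that a formula of the naive shape you sketch, with strict existential successors and no degeneracy allowance, would break the companion Lemma~\ref{lm:tilingpowerset}: it could not be refuted over $(\mathcal{P}(\mathbb{N}),\cup)$, where degenerate splittings are unavoidable. So the omission is not one of routine detail: the apparatus that makes your ``main obstacle'' surmountable --- markers with universal decomposition constraints, parity counters, a $\Box$ made transitive by associativity, and the staircase plus diagonal induction --- is the actual content of the paper's proof and is absent from the proposal.
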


\begin{Lemma}\label{lm:tilingpowerset}
    If $\phi^{}_\mathcal{W}$ is valid in $(\mathcal{P}(\mathbb{N}), \cup)$ (i.e., $(\mathcal{P}(\mathbb{N}), \cup)^+\vDash \phi^{}_\mathcal{W}\approx \top$), then $\mathcal{W}$ does not tile $\mathbb{N}^2$.
    %If $\mathcal{W}$ tiles $\mathbb{N}^2$, then $\phi^{}_\mathcal{W}$ is refuted by $(\mathcal{P}(\mathbb{N}), \cup)$.
\end{Lemma}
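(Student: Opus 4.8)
The plan is to prove the contrapositive: assuming $\mathcal{W}$ admits a tiling $\tau:\mathbb{N}^2\to\mathcal{W}$, I construct a model on the associative frame $(\mathcal{P}(\mathbb{N}),\cup)$ that refutes $\phi^{}_\mathcal{W}$, that is, a valuation $V$ witnessing $(\mathcal{P}(\mathbb{N}),\cup)^+\not\vDash\phi^{}_\mathcal{W}\approx\top$. Recall that $(\mathcal{P}(\mathbb{N}),\cup)$ is a genuine associative frame in the sense of Definition~\ref{def:associativeframe}, being the join-reduct of the Boolean algebra $\mathcal{P}(\mathbb{N})$: its points are the subsets of $\mathbb{N}$ and its ternary relation is $R\,X\,Y\,Z$ iff $X=Y\cup Z$. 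The whole difficulty is to realize the grid $\mathbb{N}^2$, together with its horizontal and vertical successor structure and the tile assignment $\tau$, \emph{inside} this union-semilattice, and to do so in a way robust to the many extraneous points and union-decompositions that $(\mathcal{P}(\mathbb{N}),\cup)$ contains.

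The core structural device is a semilattice embedding of the grid. Split $\mathbb{N}$ into two disjoint infinite sets with increasing enumerations $a_0<a_1<\cdots$ and $b_0<b_1<\cdots$, and encode the position $(m,n)$ by the staircase set $e(m,n):=\{a_0,\dots,a_m\}\cup\{b_0,\dots,b_n\}$. Then $e$ is injective and is a homomorphism from the grid semilattice $(\mathbb{N}^2,\max)$ into $(\mathcal{P}(\mathbb{N}),\cup)$, since $e(m,n)\cup e(m',n')=e(\max(m,m'),\max(n,n'))$; in particular the right and up neighbours are recovered as $e(m+1,n)=e(m,n)\cup e(m+1,0)$ and $e(m,n+1)=e(m,n)\cup e(0,n+1)$, so the frame relation $R$ restricted to the image $e[\mathbb{N}^2]$ mirrors joins in the grid (if the polarity with which $\phi^{}_\mathcal{W}$ uses $\circ$ demands it, one instead takes the dual cofinite encoding $\mathbb{N}\setminus e(m,n)$, which is a homomorphism to $(\mathcal{P}(\mathbb{N}),\cup)$ realizing the opposite navigation direction). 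I would then define $V$ to match, along $e$, the intended grid model underlying the construction of $\phi^{}_\mathcal{W}$: the tile-type atoms hold at $e(m,n)$ exactly according to $\tau(m,n)$, and the auxiliary ``grid-point'' and ``successor'' atoms are supported precisely on $e[\mathbb{N}^2]$ (together with whatever minimal axis or direction markers the formula requires). With this valuation the colour-matching clauses hold because $\tau$ is a genuine tiling (Definition~\ref{def:WangTiles}), and the existential $\circ$-clauses of Definition~\ref{def:semantics} are witnessed directly by the homomorphism property of $e$.

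The step I expect to be the main obstacle is verifying that this tightly specified valuation evaluates $\phi^{}_\mathcal{W}$ at $e(0,0)$ exactly as in the abstract grid model, \emph{despite} the richness of $(\mathcal{P}(\mathbb{N}),\cup)$. Although $e[\mathbb{N}^2]$ is closed under $\cup$, it is far from relationally closed: a staircase set $e(m,n)$ decomposes as $Y\cup Z$ in many ways in which $Y$ or $Z$ is not a staircase set, and since the $\circ$-clause quantifies over \emph{all} such $Y,Z$, these spurious decompositions threaten to create unwanted witnesses for positive $\circ$-subformulas or, more dangerously, to falsify the universal (negated-$\circ$, box-like) constraint subformulas encoding tile matching. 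The way to control this is to keep the ``grid-point'' atom false off $e[\mathbb{N}^2]$, so that every constraint is guarded by the conjuncts asserting that the relevant points are genuine grid points and genuine successors; then only the grid-internal decompositions are relevant, and these are governed entirely by the homomorphism property of $e$. Here the idempotence and commutativity of $\cup$ are helpful rather than harmful, as they are precisely the semilattice laws around which the encoding is built. Granting this verification, $\phi^{}_\mathcal{W}$ fails at $e(0,0)$, which is the desired refutation.
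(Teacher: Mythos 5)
Your high-level plan matches the paper's: prove the contrapositive by building a valuation on the frame $(\mathcal{P}(\mathbb{N}),\cup)$ refuting $\phi^{}_\mathcal{W}$, and your hedge about polarity is well taken --- the increasing encoding $e(m,n)$ cannot work as primary, since the box $\Box$ in $\phi^{}_\mathcal{W}$ is past-looking (on this frame, $ZSY$ iff $Y\subseteq Z$) and the successor clauses act by \emph{decomposition} $Z=\{n\}\cup Z'$, so the grid must descend from $\mathbb{N}$ through cofinite sets, exactly your dual encoding. The genuine gap is in your central device for taming spurious decompositions: supporting the grid-point and tile atoms \emph{precisely on one embedded copy} $e[\mathbb{N}^2]$ and guarding the constraints with grid-point atoms. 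This fails against the actual shape of $\phi^{}_\mathcal{W}$. Its successor constraints are universally quantified hook-arrows, e.g.\ $\gamma_1^h$ contains $\mathtt{x'}\hookrightarrow\bigl(\mathtt{x_e}\circ\mathtt{y_e}\lor[\mathtt{x_o}\circ\mathtt{y_e}\land\mathtt{R}(t)]\bigr)$, i.e.\ $\neg(\mathtt{x'}\circ\neg\psi)$: \emph{every} decomposition $U=W\cup U'$ with $W\Vdash\mathtt{x'}$ must put $\psi$ at $U'$. For the successor chain to run at all, $\mathtt{x'}$ must hold of all marked singletons (shrinking $V(\mathtt{x'})$ to the single ``intended'' singleton kills the $\alpha$-axioms one step down, since intendedness depends on the current position and atoms cannot see context). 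But then, at a canonical grid point, removing any \emph{non-minimal} marked singleton yields a set of the correct cofinite shape that lies off your copy; under single-copy support it satisfies no grid atom, the consequent of the hook-arrow has no witness, and the $\gamma$-axiom is falsified at your distinguished point. Your guard sits in the antecedent (at the grid point $U$) and does not restrict which decompositions the hook-arrow ranges over, so it cannot block this.

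The paper's valuation is built precisely to be invariant under this ambiguity: atoms are defined by \emph{cardinality and parity of complements}, not by membership in a distinguished copy --- $V(\mathtt{x_e})=\{X\subseteq 2\mathbb{N}\mid |2\mathbb{N}\setminus X|\text{ even}\}$, dually for $\mathtt{x_o},\mathtt{y_e},\mathtt{y_o}$, with $V(\mathtt{x'})$ all even singletons and the tile atom $\mathtt{t}$ holding at $X\cup Y$ whenever $\tau(|2\mathbb{N}\setminus X|,|(2\mathbb{N}+1)\setminus Y|)=t$. Then removing \emph{any} even element of $X$ lands on another legitimate representative of the successor column, the uniqueness of the even/odd split (which powers $\beta_1,\beta_2$) replaces your grid-point guard, and the $\gamma$-clauses hold because $\tau$ is a tiling. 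So the fix for your proposal is not better guarding but redefining the atoms coordinate-wise so that \emph{all} sets of the right shape count as grid points; with that change your argument becomes the paper's proof.
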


Using these two lemmas, we can derive the following theorem, stated both in algebraic and logical terms.
    \begin{Theorem}\label{th:undecidable}
    \textit{ } \vspace{.2cm}
    
        \begin{minipage}{0.45\textwidth}
            Let $\mathsf{V}$ be a variety of associative BAOs. \\ If $\mathsf{V}$ contains $(\mathcal{P}(\mathbb{N}), \cup)^+$, then $\mathsf{V}$ is undecidable.
        \end{minipage}%
        \hfill
        \begin{minipage}{0.45\textwidth}
            Let $\mathbf{L}$ be an associative normal modal logic. \\ If $\mathbf{L}\subseteq \textnormal{Log}(\mathcal{P}(\mathbb{N}), \cup)$, then $\mathbf{L}$ is undecidable.
        \end{minipage}
        \end{Theorem}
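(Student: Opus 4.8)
The plan is to derive Theorem~\ref{th:undecidable} by combining the two tiling lemmas into the promised biimplication and then reducing the (undecidable) tiling problem to the equational theory of $\mathsf{V}$ (equivalently, to membership in $\mathbf{L}$). The two formulations are interchangeable via the dictionary established in the Preliminaries: by the Proposition, $\mathbf{L}$ is decidable iff the equational theory of $\mathsf{V}_\mathbf{L}$ is, and the conditions ``$\mathsf{V}\supseteq (\mathcal{P}(\mathbb{N}),\cup)^+$'' and ``$\mathbf{L}\subseteq\mathrm{Log}(\mathcal{P}(\mathbb{N}),\cup)$'' are dual to each other. So I would prove the algebraic statement and note that the logical one follows by duality.

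First I would establish the key biimplication: for every finite tile set $\mathcal{W}$,
\begin{align*}
    \mathcal{W}\text{ does not tile }\mathbb{N}^2 \quad\text{iff}\quad \mathsf{V}\vDash \phi^{}_\mathcal{W}\approx\top.
\end{align*}
For the forward direction, suppose $\mathcal{W}$ does not tile $\mathbb{N}^2$. By Lemma~\ref{lm:tiling}, $\mathsf{BSg}\vDash\phi^{}_\mathcal{W}\approx\top$; since $\mathsf{V}\subseteq\mathsf{BSg}$ (every variety of associative BAOs is a subvariety of the largest one), we get $\mathsf{V}\vDash\phi^{}_\mathcal{W}\approx\top$. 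For the converse, suppose $\mathsf{V}\vDash\phi^{}_\mathcal{W}\approx\top$. Because $(\mathcal{P}(\mathbb{N}),\cup)^+\in\mathsf{V}$ by hypothesis, this single algebra also satisfies $\phi^{}_\mathcal{W}\approx\top$, i.e.\ $\phi^{}_\mathcal{W}$ is valid in $(\mathcal{P}(\mathbb{N}),\cup)$; Lemma~\ref{lm:tilingpowerset} then yields that $\mathcal{W}$ does not tile $\mathbb{N}^2$. This is exactly the circular chain of implications depicted earlier: Lemma~\ref{lm:tiling} gives the ``tiling fails $\Rightarrow$ refuted by associative BAO'' edge, validity in $\mathsf{V}$ transfers to validity in the particular algebra $(\mathcal{P}(\mathbb{N}),\cup)^+$, and Lemma~\ref{lm:tilingpowerset} closes the loop.

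With the biimplication in hand, the reduction is routine. The map $\mathcal{W}\mapsto\phi^{}_\mathcal{W}$ is computable (the formulas are constructed effectively from the finite specification of $\mathcal{W}$, as promised in the next section), and $\phi^{}_\mathcal{W}$ is, up to the trivial translation $\mathsf{t}\approx\mathsf{s}\mapsto(\mathsf{t}\leftrightarrow\mathsf{s})\in\mathbf{L}$ / $\phi\mapsto\phi\approx\top$, an instance of the equational theory of $\mathsf{V}$. Hence any decision procedure for the equational theory of $\mathsf{V}$ would, composed with this computable map, decide whether an arbitrary $\mathcal{W}$ tiles $\mathbb{N}^2$, contradicting Theorem~\ref{th:tilingproblem}. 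Therefore the equational theory of $\mathsf{V}$ is undecidable, and dually $\mathbf{L}$ is undecidable.

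I do not expect any genuine obstacle at this level: the theorem is essentially a bookkeeping combination of the two lemmas with the undecidability of tiling, and all the mathematical content has been quarantined into Lemma~\ref{lm:tiling} and Lemma~\ref{lm:tilingpowerset}. The only points requiring minor care are (i) checking that validity of $\phi^{}_\mathcal{W}$ in the whole variety $\mathsf{V}$ legitimately descends to validity in the single member $(\mathcal{P}(\mathbb{N}),\cup)^+$---immediate, since an identity holding in a class holds in each of its members---and (ii) making the ``computable reduction'' rigorous, which amounts to noting that $\mathcal{W}\mapsto\phi^{}_\mathcal{W}$ is a total computable function, deferred to the construction in Section~\ref{sec:tilinglemmas}. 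The real work, and hence the true locus of difficulty, lies in the two tiling lemmas themselves---particularly Lemma~\ref{lm:tiling}, where one must verify that the failure of a tiling forces $\phi^{}_\mathcal{W}$ to hold over \emph{all} associative BAOs, not merely the intended powerset frame.
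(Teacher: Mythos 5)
Your proposal is correct and matches the paper's own proof essentially step for step: both establish that $\mathsf{V}\vDash\phi^{}_{\mathcal{W}}\approx\top$ (resp.\ $\phi^{}_{\mathcal{W}}\in\mathbf{L}$) iff $\mathcal{W}$ does not tile $\mathbb{N}^2$, using Lemma~\ref{lm:tiling} for one direction and, via the sandwich $(\mathcal{P}(\mathbb{N}),\cup)^+\in\mathsf{V}\subseteq\mathsf{BSg}$, Lemma~\ref{lm:tilingpowerset} for the other, then conclude by the computable reduction from Berger's tiling problem. The only cosmetic difference is that you prove the algebraic half and transfer to the logical half by the duality from the Preliminaries, whereas the paper runs both formulations in parallel.
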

\begin{proof}
    Let $\mathsf{V}$ be a variety [or logic $\mathbf{L}$, respectively], and suppose that 
    \[
    (\mathcal{P}(\mathbb{N}), \cup)^+\in \mathsf{V}\subseteq \mathsf{BSg} \quad [\text{or } \mathbf{AK}_2\subseteq \mathbf{L}\subseteq \textnormal{Log}(\mathcal{P}(\mathbb{N}), \cup)].
    \]
    We claim that $\phi^{}_\mathcal{W}\approx \top$ is valid in $\mathsf{V}$ [$\phi^{}_\mathcal{W}\in \mathbf{L}$] iff $\mathcal{W}$ does not tile the quadrant. Indeed, the left-to-right direction follows from the latter tiling lemma, and right-to-left from the former.
    
    Consequently, the undecidability of the tiling problem implies the undecidability of the equational theory of $\mathsf{V}$ [of $\mathbf{L}$].
    % By algebraic completeness, it suffices to show that $\mathcal{W}$ tiles the quadrant iff $V$ refutes $\varphi_\mathcal{W}$. This follows by the preceding lemmas: the left-to-right direction is immediate, while the right-to-left requires a brief argument. So suppose $V$ refutes $\varphi_\mathcal{W}$. Then there is some algebra $A\in V$ that refutes $\varphi_\mathcal{W}$. Although the canonical extension of $A$ need not belong to $V$ (as $V$ need not be canonical), it remains an associative BAO by the canonicity of associativity. Thus, the ultrafilter frame of $A$ is an associative frame refuting $\varphi_\mathcal{W}$, thereby establishing the claim.
\end{proof}
We continue by highlighting the main consequences of this theorem. First, the promised solution to the problem posed in \cite{GorankoVakarelov}.

\begin{Theorem}\label{th:hyp}
    \textit{ } \vspace{.2cm}
    
\begin{minipage}{0.45\textwidth}
            The variety generated by complex Boolean algebras, $\mathsf{Var(\mathsf{BA}^+)},$ is undecidable.
        \end{minipage}%
        \hfill
        \begin{minipage}{0.45\textwidth}
            Hyperboolean modal logic is undecidable.
        \end{minipage}
\end{Theorem}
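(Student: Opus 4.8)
The plan is to deduce Theorem~\ref{th:hyp} from Theorem~\ref{th:undecidable} by exhibiting the complex algebra $(\mathcal{P}(\mathbb{N}),\cup)^+$ as a member of the variety $\mathsf{Var}(\mathsf{BA}^+)$ generated by complex (hyperboolean) algebras. The key observation is that the join-reduct $(\mathbb{N}\text{ under union})$ is literally the join-semilattice reduct of a Boolean algebra: taking the powerset Boolean algebra $\mathfrak{B}=(\mathcal{P}(\mathbb{N}),\cup,\cap,{}^c,\varnothing,\mathbb{N})$, its join-operation is exactly the operation $\cup$ on $\mathcal{P}(\mathbb{N})$. Hence, as already noted in the preliminaries, $(\mathfrak{B}_\vee)^+=(\mathcal{P}(\mathbb{N}),\cup)^+$ arises as a reduct of the full hyperboolean algebra $\mathfrak{B}^+=(\mathcal{P}(\mathcal{P}(\mathbb{N})),\cup,\cap,{}^c,\varnothing,\mathcal{P}(\mathbb{N}),\lor,\land,\neg,\bot,\top)$, where $\lor$ is the complex lifting of the Boolean join. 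I would make this identification precise and then observe that the single inner operator $\circ$ of our signature is interpreted by this complex join $\lor$.

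First I would fix the powerset Boolean algebra $\mathfrak{B}=\mathcal{P}(\mathbb{N})$ and form its hyperboolean algebra $\mathfrak{B}^+\in\mathsf{BA}^+$. Next I would pass to the $\{\cup,\cap,{}^c,\varnothing,\mathcal{P}(\mathbb{N}),\lor\}$-reduct, which is an associative BAO and which, by the identity $(B,\vee)^+=(\mathcal{P}(B),\cup,\cap,{}^c,\varnothing,B,\vee)$ recorded just before the statement of the theorem, is exactly $(\mathcal{P}(\mathbb{N}),\cup)^+$ with $B=\mathcal{P}(\mathbb{N})$. Since a reduct of a member of $\mathsf{BA}^+$ to the associative-BAO signature lies in $\mathsf{Var}(\mathsf{BA}^+)$ (the variety is defined in the signature with the single inner operator $\circ$, and we interpret $\circ$ as $\lor$), we obtain $(\mathcal{P}(\mathbb{N}),\cup)^+\in\mathsf{Var}(\mathsf{BA}^+)$. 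Finally, because $\mathsf{Var}(\mathsf{BA}^+)$ is a variety of associative BAOs containing $(\mathcal{P}(\mathbb{N}),\cup)^+$, Theorem~\ref{th:undecidable} applies directly and yields undecidability of its equational theory; the logical formulation follows by the correspondence between associative normal modal logics and varieties of associative BAOs established in the Proposition.

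The one point demanding care — and the main potential obstacle — is making the reduct argument airtight with respect to signatures. Our associative-BAO signature has a single inner operator $\circ$, whereas hyperboolean algebras carry one modality for each of the several Boolean operations. I must therefore verify that the class $\mathsf{BA}^+$, when regarded in the $\circ$-signature via the interpretation $\circ:=\lor$ (the complex join), still generates a variety containing $(\mathcal{P}(\mathbb{N}),\cup)^+$, and that undecidability of this $\circ$-fragment transfers to the full hyperboolean logic. For the algebraic statement this is immediate: the equational theory of $\mathsf{Var}(\mathsf{BA}^+)$ in the richer signature restricts to that of its $\circ$-reduct, so undecidability of the reduct's equational theory (which is what Theorem~\ref{th:undecidable} delivers) entails undecidability of the richer theory, as any decision procedure for the latter would decide the former. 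For the logical formulation, the same reasoning shows that the formulas $\phi_\mathcal{W}$ — which use only $\circ$ among the modalities — belong to hyperboolean modal logic iff they are valid over $(\mathcal{P}(\mathbb{N}),\cup)$, so the reduction embeds verbatim and decidability of hyperboolean modal logic would decide the tiling problem.
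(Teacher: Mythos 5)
Your proposal is correct and follows essentially the same route as the paper: both identify $(\mathcal{P}(\mathbb{N}),\cup)^+$ as (the complex algebra of) the join-reduct of a Boolean algebra, apply Theorem~\ref{th:undecidable} to the resulting variety in the $\circ$-signature, and transfer undecidability from this fragment to the full hyperboolean signature on the grounds that a decision procedure for the richer equational theory (or logic) would decide the $\circ$-fragment. The paper phrases this via the class $\mathsf{BA}_\lor$ of all join-reducts and the inclusions $(\mathcal{P}(\mathbb{N}),\cup)^+\in\mathsf{Var}(\mathsf{BA}_\lor^+)\subseteq\mathsf{BSg}$, but your signature-careful reduct argument is the same proof.
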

\begin{proof}
    %We include both an algebraic and a dual proof here; later results are proved using only one approach, if made explicit at all.
    %We begin with the latter. By $\mathrm{Log}(\mathsf{BA})$ we denote the modal logic in language $\mathcal{L}$, where $\mathsf{BA}$ is the class of Boolean algebras and $\circ$ is interpreted via the join of a Boolean algebra. Since the join of a Boolean algebra is associative, and powersets are Boolean algebras, it follows that $\mathbf{AK}_2\subseteq \mathrm{Log}(\mathsf{BA})\subseteq \textnormal{Log}(\mathcal{P}(\mathbb{N}), \cup)$, hence $\mathrm{Log}(\mathsf{BA})$ is undecidable by Theorem \ref{th:undecidable}. It follows that hyperboolean modal logic is undecidable, as (a) undecidability in a subsignature clearly implies undecidability in the full signature and (b) $\mathrm{Log}(\mathsf{BA})$ is, by definition, the $\mathcal{L}$-fragment of hyperboolean modal logic.
    %For the algebraic argument, for analogous reasons, it suffices to (i) recall that given a Boolean algebra $(A, \lor, \land, \neg, \bot, \top)$, the reduct $(\mathcal{P}(A), \cup, \cap, ^c, \varnothing, A, \lor)$ of its complex algebra is an associative BAO; and (ii) observe that $(\mathcal{P}(\mathbb{N}), \cup)^+$ is a reduct of a complex Boolean algebra.
    Denote by $\mathsf{BA}_\lor$, the class of join-reducts $(B,\lor)$ of Boolean algebras $(B, \lor, \land, \neg, \bot, \top)$. Since the join of a Boolean algebra is associative, and powersets are Boolean algebras, we have $\mathsf{A}\supseteq \mathsf{BA}_\lor\ni \mathcal{P}(\mathbb{N}, \cup)$, where, we recall, $\mathsf{A}$ is the class of associative frames. Consequently,
    \[
    (\mathcal{P}(\mathbb{N}), \cup)^+\in \mathsf{Var}(\mathsf{BA}_\lor^+)\subseteq \mathsf{BSg} \quad [\text{or } \mathbf{AK}_2\subseteq \mathrm{Log}(\mathsf{BA}_\lor)\subseteq \textnormal{Log}(\mathcal{P}(\mathbb{N}), \cup)],
    \]
    hence $\mathsf{Var}(\mathsf{BA}_\lor^+)$ [$\mathrm{Log}(\mathsf{BA}_\lor)$] is undecidable by Theorem \ref{th:undecidable}. It follows that $\mathsf{Var}(\mathsf{BA}^+)$ [hyperboolean modal logic] is undecidable, as undecidability in a subsignature clearly implies undecidability in the full signature [$\mathrm{Log}(\mathsf{BA}_\lor)$ is, by definition, the $\mathcal{L}$-fragment of hyperboolean modal logic, where our `$\circ$' corresponds to \cite{GorankoVakarelov}'s `$\langle \lor \rangle$'-modality (or, alternatively, their `$\langle \land \rangle$'-modality)]. %(a) undecidability in a subsignature clearly implies undecidability in the full signature and (b) the operations of taking reducts and constructing complex algebras commute [$\mathrm{Log}(\mathsf{BA}_\lor)$ is, by definition, the $\mathcal{L}$-fragment of hyperboolean modal logic, where our `$\circ$' corresponds to \cite{GorankoVakarelov}'s `$\langle \lor \rangle$'-modality (or, alternatively, their `$\langle \land \rangle$'-modality)].
\end{proof}

Second, Theorem~\ref{th:undecidable} also settles the mentioned problem posed in algebraic terms by Bergman \cite{Bergman} and Jipsen et al. \cite{Jipsen2021}, and in logical terminology by the author in \cite{Knudstorp2022:thesis,Knudstorp2023:synthese}.\footnote{A related problem also posed by Bergman \cite{Bergman} and Jipsen et al. \cite{Jipsen2021}, is to find an axiomatisation of $\mathsf{Var}(\mathsf{SL}^+)$. Unbeknownst to the author at the time, coming from the relational perspective of modal information logic, this was solved in the author's Master's thesis \cite{Knudstorp2022:thesis}, in still unpublished work. In further, also unpublished, work \cite{Knudstorp:semilattices}, the author has also shown that it isn't finitely axiomatisable (which solves Problem 4.3 from \cite{Bergman} and a problem left open in \cite{Knudstorp2022:thesis}).}
\begin{Theorem}\label{th:MIL}
    \textit{ } \vspace{.2cm}
    
\begin{minipage}{0.45\textwidth}
            The variety generated by complex algebras of semilattices, $\mathsf{Var}(\mathsf{SL}^+)$, is undecidable.
        \end{minipage}%
        \hfill
        \begin{minipage}{0.45\textwidth}
            The modal (information) logic of semilattices, here denoted $\mathrm{Log}(\mathsf{SL})$, is undecidable.
        \end{minipage}
\end{Theorem}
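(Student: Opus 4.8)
The plan is to reduce Theorem~\ref{th:MIL} to the already-established Theorem~\ref{th:undecidable} by exhibiting a single semilattice whose complex algebra embeds enough of the structure of $(\mathcal{P}(\mathbb{N}), \cup)^+$ to carry the tiling argument. The natural candidate is the semilattice $(\mathcal{P}(\mathbb{N}), \cup)$ itself: it is associative, commutative, and idempotent, hence lies in $\mathsf{SL}$, and its complex algebra $(\mathcal{P}(\mathbb{N}), \cup)^+$ is therefore a member of $\mathsf{SL}^+ \subseteq \mathsf{Var}(\mathsf{SL}^+)$. First I would observe that the two tiling lemmas already do all the work at the level of a single frame: Lemma~\ref{lm:tiling} shows $\mathsf{BSg}\vDash \phi_\mathcal{W}\approx\top$ whenever $\mathcal{W}$ does not tile, and Lemma~\ref{lm:tilingpowerset} shows that validity in $(\mathcal{P}(\mathbb{N}),\cup)$ forces non-tileability. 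So the only genuinely new point is that the frame $(\mathcal{P}(\mathbb{N}),\cup)$ used in Lemma~\ref{lm:tilingpowerset} is in fact a semilattice frame.

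Concretely, I would argue as follows. Since $(\mathcal{P}(\mathbb{N}),\cup)\in\mathsf{SL}$, its complex algebra satisfies
\[
    (\mathcal{P}(\mathbb{N}), \cup)^+\in \mathsf{Var}(\mathsf{SL}^+)\subseteq \mathsf{BSg}
    \quad [\text{or } \mathbf{AK}_2\subseteq \mathrm{Log}(\mathsf{SL})\subseteq \textnormal{Log}(\mathcal{P}(\mathbb{N}), \cup)].
\]
The left-hand inclusion holds because $(\mathcal{P}(\mathbb{N}),\cup)^+$ is literally one of the generating complex algebras of $\mathsf{Var}(\mathsf{SL}^+)$, and every variety of associative BAOs is contained in $\mathsf{BSg}$; dually, $\mathrm{Log}(\mathsf{SL})\subseteq\mathrm{Log}(\mathcal{P}(\mathbb{N}),\cup)$ because the logic of a class of frames is contained in the logic of any single frame from that class, and $\mathbf{AK}_2\subseteq\mathrm{Log}(\mathsf{SL})$ since semilattice frames are in particular associative. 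This places $\mathsf{Var}(\mathsf{SL}^+)$ (respectively $\mathrm{Log}(\mathsf{SL})$) exactly in the hypothesis of Theorem~\ref{th:undecidable}, and undecidability follows immediately by applying that theorem.

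There is, however, one subtlety I expect to be the main obstacle, and it concerns the \emph{direction} of associativity. Recall that the paper adopts the first-coordinate convention for associative frames, whereas in a semilattice the relation is $Rxyz$ iff $x = y\cup z$, which is manifestly commutative and hence insensitive to coordinate conventions; so for $(\mathcal{P}(\mathbb{N}),\cup)$ this is a non-issue. The point to verify carefully is therefore just that the tiling formula $\phi_\mathcal{W}$ and the proofs of the two lemmas only ever invoke frame conditions that hold in semilattices—in particular that Lemma~\ref{lm:tilingpowerset} was proved against the semilattice frame $(\mathcal{P}(\mathbb{N}),\cup)$ and not merely against its complex algebra qua abstract BAO. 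Since Lemma~\ref{lm:tilingpowerset} is stated exactly in terms of this frame, no extra properties beyond membership in $\mathsf{SL}$ are needed, and the theorem reduces to a one-line invocation of Theorem~\ref{th:undecidable}. The passage from $\mathrm{Log}(\mathsf{SL})$ being undecidable to the full modal information logic signature, if the latter carries additional modalities, would then follow by the same subsignature remark used in the proof of Theorem~\ref{th:hyp}.
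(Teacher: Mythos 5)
Your proposal is correct and matches the paper's own argument, which is exactly the one-line observation that semilattices are associative and $(\mathcal{P}(\mathbb{N}),\cup)$ is a semilattice, so Theorem~\ref{th:undecidable} applies directly. Your extra worry about the coordinate convention for associativity is, as you note yourself, moot here since $Rxyz$ iff $x=y\cup z$ is commutative in $(y,z)$, and the paper does not dwell on it either.
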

\begin{proof}
    Semilattices are associative and $(\mathcal{P}(\mathbb{N}), \cup)$ is, in particular, a semilattice.
\end{proof}
The other main protagonist of \cite{Bergman} and \cite{Jipsen2021}, apart from $\mathsf{Var}(\mathsf{SL}^+)$,  is the variety $\mathsf{BSl}$ of \textit{Boolean semilattices}---Boolean semigroups that are commutative ($x\circ y\approx y\circ x$) and square-increasing ($x\leq x\circ x$). From Theorem~\ref{th:undecidable}, we derive undecidability of its equational theory.
\begin{Theorem}
    The variety of Boolean semilattices $\mathsf{BSl}$ is undecidable.\footnote{I.e., the logic $\mathbf{K}_2\oplus (p\circ q)\circ r\leftrightarrow p\circ (q\circ r)\oplus (p\circ q)\to(q\circ p)\oplus p\to(p\circ p)$ is undecidable.}
\end{Theorem}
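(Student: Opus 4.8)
The plan is to reduce this to Theorem~\ref{th:undecidable} by showing that $(\mathcal{P}(\mathbb{N}),\cup)^+$ belongs to the variety $\mathsf{BSl}$, so that $\mathsf{BSl}$ is a variety of associative BAOs containing the powerset algebra and hence inherits undecidability. Since $\mathsf{BSl}$ is by definition the variety of Boolean semigroups satisfying commutativity $x\circ y\approx y\circ x$ and the square-increasing law $x\leq x\circ x$, it suffices to verify that the complex algebra $(\mathcal{P}(\mathbb{N}),\cup)^+$ satisfies these two extra identities, since we already know from the excerpt that every complex algebra of an associative frame is an associative BAO.

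First I would recall that $(\mathcal{P}(\mathbb{N}),\cup)$ is a semilattice: the operation $\cup$ on $\mathbb{N}$-subsets (viewing the frame points as the elements of $\mathcal{P}(\mathbb{N})$) is associative, commutative, and idempotent. Passing to the complex algebra, the inner operation $\circ$ on $X,Y\in\mathcal{P}(\mathcal{P}(\mathbb{N}))$ is the complex lifting $X\circ Y=\{x\cup y\mid x\in X,\ y\in Y\}$. Commutativity of $\circ$ is then immediate from commutativity of $\cup$ on the underlying points: $X\circ Y=\{x\cup y\mid x\in X,y\in Y\}=\{y\cup x\mid x\in X,y\in Y\}=Y\circ X$. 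For the square-increasing law $X\leq X\circ X$, which in the external Boolean order means $X\subseteq X\circ X$, I would use idempotence of the underlying semilattice: for any $x\in X$ we have $x=x\cup x\in\{x'\cup x''\mid x',x''\in X\}=X\circ X$, so $X\subseteq X\circ X$ as required.

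Having checked both identities, I conclude that $(\mathcal{P}(\mathbb{N}),\cup)^+\in\mathsf{BSl}$, and since $\mathsf{BSl}$ is a variety of associative BAOs (Boolean semigroups satisfying two further equational laws, hence equationally defined and thus a variety), Theorem~\ref{th:undecidable} applies directly to give undecidability of its equational theory. This is essentially the same one-line argument used for Theorem~\ref{th:MIL}, where the key observation was that $(\mathcal{P}(\mathbb{N}),\cup)$ is a semilattice; here one simply notes in addition that the commutativity and idempotence of the semilattice operation transfer, respectively, to commutativity of $\circ$ and the square-increasing law in the complex algebra.

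The argument is routine and I do not expect a genuine obstacle; the only point demanding care is keeping straight the two layers of structure, namely the external Boolean order $\leq$ (which is $\subseteq$ on $\mathcal{P}(\mathcal{P}(\mathbb{N}))$) against which the inequality $x\leq x\circ x$ is interpreted, versus the inner complex operation $\circ$ lifted from the semilattice join. One must verify that $x\leq x\circ x$ really is the algebraic rendering of the modal axiom $p\to(p\circ p)$ in the footnote, i.e.\ that the square-increasing inequality corresponds to validity of that implication—but this is immediate from the standard translation between $\leq$ and $\to$, together with the soundness and completeness machinery already set up in the preliminaries.
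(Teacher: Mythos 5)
Your proposal is correct and matches the paper's proof, which likewise observes that $\mathsf{BSl}\subseteq\mathsf{BSg}$ and that the complex union operation of $(\mathcal{P}(\mathbb{N}),\cup)^+$ is commutative and square-increasing (in addition to associative), then invokes Theorem~\ref{th:undecidable}; you have merely spelled out the routine verifications the paper leaves implicit.
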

\begin{proof}
    Clearly, $\mathsf{BSl}\subseteq \mathsf{BSg}$, and the complex union operation of $(\mathcal{P}(\mathbb{N}), \cup)^+$ is both commutative and square-increasing (in addition to associative).
\end{proof}
Problem 4.4 of \cite{Bergman} asks whether $\mathsf{BSl}$ is generated by its finite members. Since it is finitely axiomatisable, if it were generated by its finite members, it would be decidable.
\begin{Corollary}
    $\mathsf{BSl}$ is not generated by its finite members.
\end{Corollary}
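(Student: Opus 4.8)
The plan is to prove the contrapositive by combining two facts: that $\mathsf{BSl}$ is finitely axiomatisable, and that a finitely axiomatisable variety which is generated by its finite members has a decidable equational theory. First I would recall why $\mathsf{BSl}$ is finitely axiomatisable: it is defined by the finitely many Boolean algebra axioms together with the three equational schemes for $\circ$ (additivity, normality, associativity) and the two extra identities $x\circ y\approx x\circ y\vee y\circ x$ (commutativity, phrased equationally as $x\circ y\leq y\circ x$) and $x\leq x\circ x$ (square-increasingness), each of which can be written as a single equation. Hence $\mathsf{BSl}$ is a finitely based variety.

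Next I would invoke the standard recursion-theoretic observation, which the footnote in the excerpt already foreshadows: if a variety $\mathsf{V}$ is axiomatised by a finite (or even recursively enumerable) set of equations $\Sigma$, then the set of equations valid in $\mathsf{V}$ is recursively enumerable, since one can systematically enumerate all equational consequences of $\Sigma$ via Birkhoff's completeness theorem for equational logic. If, in addition, $\mathsf{V}=\mathsf{Var}(\mathsf{V}_{\mathrm{fin}})$ is generated by its class of finite members, then the complement is also recursively enumerable: an equation $\mathsf{t}\approx\mathsf{s}$ fails in $\mathsf{V}$ if and only if it fails in some finite member of $\mathsf{V}$, and one can enumerate all finite $\mathsf{BSl}$-algebras (checking the finitely many axioms on each finite candidate Boolean algebra with operator is decidable) and test the equation on each. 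A set whose membership problem and whose complement are both recursively enumerable is decidable; so $\mathsf{V}$ would have a decidable equational theory.

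The two enumerations together would therefore yield a decision procedure for the equational theory of $\mathsf{BSl}$. But the preceding theorem establishes that $\mathsf{BSl}$ is undecidable. This contradiction forces the conclusion that $\mathsf{BSl}$ is not generated by its finite members, establishing the corollary.

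The main obstacle, such as it is, is not mathematical depth but making precise the claim that finite $\mathsf{BSl}$-algebras can be effectively enumerated and that the axioms can be effectively checked on them; this is routine, since $\mathsf{BSl}$ is given by finitely many first-order (indeed equational) conditions, each of which is decidable on a given finite structure. I would also be careful to state the underlying lemma in the sharp form actually needed---that a finitely based variety generated by its finite members is decidable---rather than any stronger or vaguer claim. Everything else follows immediately from Theorem~\ref{th:undecidable} as applied to $\mathsf{BSl}$ in the previous result, so no genuinely new combinatorial work is required here; the corollary is a clean consequence of undecidability plus finite axiomatisability.
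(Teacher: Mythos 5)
Your proposal is correct and matches the paper's own argument, which likewise observes that $\mathsf{BSl}$ is finitely axiomatisable and hence, if generated by its finite members, would have a decidable equational theory---contradicting the preceding undecidability theorem. The paper states this in a single sentence; your fleshing out of the two recursive enumerations (Birkhoff-style enumeration of consequences, and enumeration of finite counterexamples) is exactly the standard content implicit in that remark.
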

Next, though not explicitly raised, we note that it follows from our result that logics recently explored by Wang and Wang \cite{WangWang2022, WangWang2025} are undecidable.
\begin{Theorem}\label{th:lat}
    The modal logics over (modular/distributive) lattices, denoted $\mathbb{HLSI}_L, \mathbb{HLSI}_{ML},$ and $\mathbb{HLSI}_{DL}$ in \cite{WangWang2025}, are all undecidable.
\end{Theorem}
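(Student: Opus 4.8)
The plan is to reduce each of the three logics to Theorem~\ref{th:undecidable} by isolating the fragment in which only the supremum modality $\Sup$ occurs. Recall that over a lattice the logics of \cite{WangWang2025} carry two binary modalities, $\Sup$ and $\Inf$, where $\Sup$ is interpreted exactly as our binary diamond $\circ$ over the join-reduct: a point $x$ satisfies $\Sup(\varphi,\psi)$ iff $x=y\lor z$ for some $y\Vdash\varphi$ and $z\Vdash\psi$. Writing $\mathsf{L},\mathsf{ML},\mathsf{DL}$ for the classes of all lattices, modular lattices, and distributive lattices---so that $\mathbb{HLSI}_L=\mathrm{Log}(\mathsf{L})$, $\mathbb{HLSI}_{ML}=\mathrm{Log}(\mathsf{ML})$, and $\mathbb{HLSI}_{DL}=\mathrm{Log}(\mathsf{DL})$ in the two-modality signature---I would consider, for each $\mathsf{C}\in\{\mathsf{L},\mathsf{ML},\mathsf{DL}\}$, the $\mathcal{L}$-fragment $\mathbf{L}_{\mathsf{C}}$ obtained by reading $\circ$ as $\Sup$. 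This is precisely the set of $\mathcal{L}$-formulas valid over the join-semilattice reducts of members of $\mathsf{C}$, and is therefore an associative normal modal logic.

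Next I would sandwich each $\mathbf{L}_{\mathsf{C}}$ between $\mathbf{AK}_2$ and $\mathrm{Log}(\mathcal{P}(\mathbb{N}),\cup)$. The lower inclusion $\mathbf{AK}_2\subseteq\mathbf{L}_{\mathsf{C}}$ holds because the join of any lattice is associative, so the associativity axiom is valid under the $\Sup$-reading. For the upper inclusion, the key observation is that $(\mathcal{P}(\mathbb{N}),\cup,\cap)$ is a distributive lattice, hence lies in $\mathsf{DL}\subseteq\mathsf{ML}\subseteq\mathsf{L}$; thus $(\mathcal{P}(\mathbb{N}),\cup,\cap)\in\mathsf{C}$ for all three classes, and any $\mathcal{L}$-formula valid over the join-reducts of $\mathsf{C}$ is in particular valid over its join-reduct $(\mathcal{P}(\mathbb{N}),\cup)$. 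This yields $\mathbf{AK}_2\subseteq\mathbf{L}_{\mathsf{C}}\subseteq\mathrm{Log}(\mathcal{P}(\mathbb{N}),\cup)$, so Theorem~\ref{th:undecidable} applies and each $\mathbf{L}_{\mathsf{C}}$ is undecidable.

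Finally, I would lift undecidability from the fragment to the full logic exactly as in the proof of Theorem~\ref{th:hyp}: since $\mathbf{L}_{\mathsf{C}}$ is the $\Sup$-only fragment of $\mathbb{HLSI}_{\mathsf{C}}$, a decision procedure for $\mathbb{HLSI}_{\mathsf{C}}$ would decide $\mathbf{L}_{\mathsf{C}}$, so undecidability in the subsignature forces undecidability of $\mathbb{HLSI}_{\mathsf{C}}$; as the argument is uniform in $\mathsf{C}$, it settles all three logics simultaneously. The only genuine point to verify---and the step I would treat most carefully---is the semantic alignment: that the $\Sup$-modality of \cite{WangWang2025} is interpreted over the join-reduct exactly as our $\circ$, and that no structural features of their frames (for instance a designated top/bottom or the simultaneous presence of $\Inf$) obstruct the embedding of the single-modality semantics into theirs. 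Once this alignment is confirmed against their definitions, the remainder is routine; and by the self-duality of $(\mathcal{P}(\mathbb{N}),\cup,\cap)$ under complementation one could equally run the argument through the $\Inf$-modality.
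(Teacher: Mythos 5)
Your proposal is correct and matches the paper's proof of Theorem~\ref{th:lat}, which likewise observes that (modular/distributive) lattices have associative joins, that powersets are (modular/distributive) lattices, and then applies Theorem~\ref{th:undecidable} to conclude that even the $\mathcal{L}$-fragments (with $\circ$ read as $\Sup$, or dually $\Inf$) are undecidable. Your write-up merely spells out the sandwiching $\mathbf{AK}_2\subseteq\mathbf{L}_{\mathsf{C}}\subseteq\mathrm{Log}(\mathcal{P}(\mathbb{N}),\cup)$ and the fragment-to-full-logic lifting that the paper leaves implicit with ``as before.''
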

\begin{proof}
    As before, it suffices to note that (i) (modular/distributive) lattices are associative and (ii) powersets are (modular/distributive) lattices. Hence, even the $\mathcal{L}$-fragments of the logics $\mathbb{HLSI}_L, \mathbb{HLSI}_{ML},$ and $\mathbb{HLSI}_{DL}$ are undecidable, where our `$\circ$' corresponds to their `$\Sup$'-modality (or `$\Inf$'-modality, for that matter).
\end{proof}

We should also point out that our proof yields a new, and to the author's knowledge the first tiling-based, proof of several known results. To begin, we have:

\begin{Theorem}[\cite{Kurucz93,KuruczNSS95:jolli}]\label{th:associativemodallogic}
    The least associative normal modal logic, 
        $$\mathbf{AK}_2=\mathbf{K}_2\oplus (p\circ q)\circ r\leftrightarrow p\circ (q\circ r),$$ 
    is undecidable.
\end{Theorem}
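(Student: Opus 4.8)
The plan is to derive Theorem~\ref{th:associativemodallogic} directly from the main result, Theorem~\ref{th:undecidable}, rather than re-running the tiling reduction. The key observation is that undecidability of the \emph{least} associative logic requires only that $(\mathcal{P}(\mathbb{N}), \cup)^+$ lie inside the variety $\mathsf{BSg}$ of \emph{all} associative BAOs, which is automatic. Concretely, I would verify that $\mathsf{BSg} = \mathsf{V}_{\mathbf{AK}_2}$ is itself a variety of associative BAOs (already recorded in the preliminaries, since $\mathsf{BSg} = \mathsf{Var}(\mathsf{A}^+)$) and that it contains $(\mathcal{P}(\mathbb{N}), \cup)^+$.

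First I would note that $(\mathcal{P}(\mathbb{N}), \cup)$ is an associative frame: the operation $\cup$ on $\mathcal{P}(\mathbb{N})$ is associative, so by Remark~\ref{rm:operationalperspective} the induced ternary relation is associative, placing $(\mathcal{P}(\mathbb{N}), \cup) \in \mathsf{A}$. Hence its complex algebra satisfies $(\mathcal{P}(\mathbb{N}), \cup)^+ \in \mathsf{A}^+ \subseteq \mathsf{Var}(\mathsf{A}^+) = \mathsf{BSg}$. Second, I would invoke Theorem~\ref{th:undecidable} with $\mathsf{V} = \mathsf{BSg}$: since $\mathsf{BSg}$ is a variety of associative BAOs containing $(\mathcal{P}(\mathbb{N}), \cup)^+$, its equational theory is undecidable. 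Equivalently, on the logical side, taking $\mathbf{L} = \mathbf{AK}_2$, we have $\mathbf{AK}_2 \subseteq \mathrm{Log}(\mathcal{P}(\mathbb{N}), \cup)$ because $(\mathcal{P}(\mathbb{N}), \cup)$ is an associative frame and $\mathbf{AK}_2 = \mathrm{Log}(\mathsf{A})$ is sound over all associative frames; so Theorem~\ref{th:undecidable} yields undecidability of $\mathbf{AK}_2$.

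There is essentially no obstacle here beyond bookkeeping, since all the work is absorbed into the two tiling lemmas feeding Theorem~\ref{th:undecidable}; the point I would emphasise is that the present argument gives a \emph{tiling-based} proof, in contrast to the original reduction from the Post correspondence problem in \cite{Kurucz93, KuruczNSS95:jolli}. The one item worth stating explicitly is the soundness half: that $\phi^{}_\mathcal{W}$ being valid in $\mathsf{BSg}$ whenever $\mathcal{W}$ fails to tile (Lemma~\ref{lm:tiling}) combines with its refutability over $(\mathcal{P}(\mathbb{N}), \cup)$ whenever $\mathcal{W}$ does tile (the contrapositive of Lemma~\ref{lm:tilingpowerset}) to show the map $\mathcal{W} \mapsto \phi^{}_\mathcal{W}$ reduces the complement of the tiling problem to membership in $\mathbf{AK}_2$. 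Since $\mathcal{W} \mapsto \phi^{}_\mathcal{W}$ is computable, Theorem~\ref{th:tilingproblem} transfers undecidability, completing the proof.
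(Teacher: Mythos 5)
Your proposal is correct and matches the paper's intended argument exactly: the paper states Theorem~\ref{th:associativemodallogic} without a separate proof precisely because it is the instance of Theorem~\ref{th:undecidable} with $\mathbf{L}=\mathbf{AK}_2$ (equivalently $\mathsf{V}=\mathsf{BSg}$), using only that $(\mathcal{P}(\mathbb{N}),\cup)$ is an associative frame so that $\mathbf{AK}_2\subseteq \mathrm{Log}(\mathcal{P}(\mathbb{N}),\cup)$ and $(\mathcal{P}(\mathbb{N}),\cup)^+\in\mathsf{BSg}$. Your additional remarks on the reduction and computability of $\mathcal{W}\mapsto\phi^{}_\mathcal{W}$ are accurate bookkeeping already contained in the proof of Theorem~\ref{th:undecidable}.
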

Next, let $\mathsf{Rel}$ and $\mathsf{CRel}$ denote the classes of algebras isomorphic to (commutative) algebras of binary relations closed under composition and the Boolean operations of intersection ($\cap$), union ($\cup$), and complementation ($^c$). By a result of Jipsen \cite[Theorem 1]{Jipsen04}, we have $\mathsf{Rel}=\mathsf{Var}(\mathsf{Sg}^+)$ and $\mathsf{CRel}=\mathsf{Var}(\mathsf{CSg}^+)$, where $\mathsf{(C)Sg}$ denotes the variety of (commutative) semigroups. We thus obtain:
\begin{Theorem}[\text{see \cite[Cor. 11.3]{Hirsch2021}\footnotemark}]\footnotetext{Thanks to Peter Jipsen for pointing me to \cite{Hirsch2021} for a proof of the undecidability of $\mathsf{Rel}$. I have not found an earlier source, though I suspect it is not the first: Andr{\'{e}}ka already showed that $\mathsf{Rel}$ is not finitely axiomatisable in \citeyear{Andreka1991} \cite{Andreka1991}. Among related results, the earliest is due to Tarski, who proved the undecidability of the equational theory of representable relation algebras---algebras isomorphic to algebras of binary relations closed under the Boolean operations, composition, and converse, and including the identity relation. For a historical overview and many results on the (un)decidability of varieties of relation algebras, see \cite{AndrekaGivantNemeti1997}.}
    $\mathsf{Rel}=\mathsf{Var}(\mathsf{Sg}^+)$ and $\mathsf{CRel}=\mathsf{Var}(\mathsf{CSg}^+)$ are undecidable.%\footnote{According to Bergman~\cite[p. 10]{Bergman}, ``it is unknown whether the variety generated by all complex algebras of semigroups even has a decidable equational base.'' However, we don't claim this as a novel result, as it can be derived from [?].}
\end{Theorem}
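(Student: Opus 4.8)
The plan is to reduce this to Theorem~\ref{th:undecidable} (or equivalently Theorem~\ref{th:MIL}), exactly as the author does with the other corollaries. The crucial input is Jipsen's cited equalities $\mathsf{Rel}=\mathsf{Var}(\mathsf{Sg}^+)$ and $\mathsf{CRel}=\mathsf{Var}(\mathsf{CSg}^+)$, which we are permitted to assume. So the only thing I need to verify is that each of these varieties contains the distinguished algebra $(\mathcal{P}(\mathbb{N}),\cup)^+$, and then Theorem~\ref{th:undecidable} immediately yields undecidability. Concretely, I would argue: $(\mathcal{P}(\mathbb{N}),\cup)$ is the join-semigroup of the semilattice $(\mathcal{P}(\mathbb{N}),\cup)$; since every semilattice is in particular a commutative semigroup, we have $(\mathcal{P}(\mathbb{N}),\cup)\in\mathsf{CSg}\subseteq\mathsf{Sg}$. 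Hence its complex algebra lies in both $\mathsf{CSg}^+$ and $\mathsf{Sg}^+$, and therefore in both generated varieties $\mathsf{Var}(\mathsf{CSg}^+)=\mathsf{CRel}$ and $\mathsf{Var}(\mathsf{Sg}^+)=\mathsf{Rel}$.

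With that membership in hand, the proof is a two-line invocation of Theorem~\ref{th:undecidable}. For the commutative case, since
$$(\mathcal{P}(\mathbb{N}),\cup)^+\in\mathsf{CRel}=\mathsf{Var}(\mathsf{CSg}^+)\subseteq\mathsf{BSg},$$
Theorem~\ref{th:undecidable} gives undecidability of the equational theory of $\mathsf{CRel}$. For the general case, I would note $\mathsf{Var}(\mathsf{CSg}^+)\subseteq\mathsf{Var}(\mathsf{Sg}^+)$ (as $\mathsf{CSg}\subseteq\mathsf{Sg}$), so $(\mathcal{P}(\mathbb{N}),\cup)^+\in\mathsf{Rel}=\mathsf{Var}(\mathsf{Sg}^+)\subseteq\mathsf{BSg}$ as well, and again apply Theorem~\ref{th:undecidable}. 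One should double-check that the signature matches: the classes $\mathsf{Rel},\mathsf{CRel}$ carry the Boolean operations $(\cup,\cap,{}^c)$ together with composition, so their reduct to $(\cup,\cap,{}^c,\varnothing,\text{top},\circ)$ is precisely an associative BAO, and the complex operation on $(\mathcal{P}(\mathbb{N}),\cup)^+$ is exactly the relational composition under Jipsen's representation, so the distinguished algebra genuinely sits inside these varieties as associative BAOs.

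The step I expect to be the main obstacle is not any of the above routine checking but rather confirming that Jipsen's representation theorem actually identifies the \emph{complex-algebra} operation of $(\mathcal{P}(\mathbb{N}),\cup)$ with relational composition in the way the reduction needs, i.e.\ that the undecidability transferred through $\mathsf{Var}(\mathsf{Sg}^+)$ is undecidability of the equational theory in the intended signature (where ``$\circ$'' is composition) and not some reinterpreted operation. Since Theorem~\ref{th:undecidable} is stated for arbitrary varieties of associative BAOs containing $(\mathcal{P}(\mathbb{N}),\cup)^+$, and Jipsen's theorem certifies $\mathsf{Rel}$ and $\mathsf{CRel}$ as exactly such varieties, this obstacle dissolves once we take the cited result at face value; no genuinely new tiling argument is required, and the whole statement is a corollary of Theorem~\ref{th:undecidable} combined with Jipsen's equalities.
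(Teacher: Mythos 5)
Your proposal is correct and matches the paper's own (implicit) argument exactly: the paper derives this theorem from Jipsen's equalities $\mathsf{Rel}=\mathsf{Var}(\mathsf{Sg}^+)$, $\mathsf{CRel}=\mathsf{Var}(\mathsf{CSg}^+)$ together with the observation that $(\mathcal{P}(\mathbb{N}),\cup)$ is a commutative semigroup, so $(\mathcal{P}(\mathbb{N}),\cup)^+$ lies in both varieties and Theorem~\ref{th:undecidable} applies. One small caution: your parenthetical ``or equivalently Theorem~\ref{th:MIL}'' would not suffice on its own, since undecidability of the subvariety $\mathsf{Var}(\mathsf{SL}^+)$ does not automatically transfer to the larger varieties $\mathsf{Var}(\mathsf{CSg}^+)$ and $\mathsf{Var}(\mathsf{Sg}^+)$ --- but your actual argument correctly invokes Theorem~\ref{th:undecidable}, so nothing is affected.
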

A final known result that merits emphasis and follows from ours, concerns the undecidability of Boolean Bunched Implication logic (BBI). This was first proven in \cite[Corollary 8.1, stating that $\mathsf{CRM}$ is undecidable]{KuruczNSS95:jolli}, thereby predating the formulation of BBI itself, with (intuitionistic) Bunched Implication logic (BI) only introduced by O'Hearn \& Pym in 1999 \cite{OHearnP99:bsl}. The result remained unknown to the bunched implication community, and BBI's decision problem was long considered open. It was eventually resolved independently by Brotherston \& Kanovich \cite{BrotherstonK10:lics, BrotherstonK14:jacm}, using Minsky machines, and by Larchey-Wendling \& Galmiche \cite{Larchey-WendlingG10:lics, Larchey-WendlingG13:tocl}, embedding an undecidable fragment of intuitionistic linear logic. All of these proofs use distinct approaches, and we supplement these with yet another: tiling.

\begin{Theorem}[\cite{KuruczNSS95:jolli, BrotherstonK10:lics, Larchey-WendlingG10:lics}]\label{th:BBI1}
    Boolean Bunched Implication logic \textnormal{(BBI)} is undecidable.
\end{Theorem}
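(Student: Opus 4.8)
The plan is to exhibit a computable reduction from the tiling problem to BBI-theoremhood by embedding the $\circ$-fragment of $\mathcal{L}$ into BBI and reusing the two tiling lemmas. Recall that BBI is sound and complete with respect to the class of relational (non-deterministic) commutative monoids $\mathbb{M}=(M,\cdot,E)$, where $\cdot:M\times M\to\mathcal{P}(M)$ is commutative and associative, $E\subseteq M$ is the set of units, and the multiplicative conjunction $*$ is interpreted by exactly the clause we gave for $\circ$ in Definition~\ref{def:semantics} in its operational form, namely $x\Vdash\varphi*\psi$ iff there are $y,z$ with $x\in y\cdot z$, $y\Vdash\varphi$, and $z\Vdash\psi$, while the unit $I$ and the magic wand $\sepimp$ receive their standard clauses \cite{Larchey-WendlingG13:tocl, BrotherstonK14:jacm}. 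The key observation is that the frame $(\mathcal{P}(\mathbb{N}),\cup)$ underlying Lemma~\ref{lm:tilingpowerset}, equipped with the unit set $E=\{\varnothing\}$, is a total deterministic commutative monoid, hence in particular a relational commutative monoid, and in it $*$ is interpreted precisely as $\circ$ is over $(\mathcal{P}(\mathbb{N}),\cup)$.

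First I would fix the translation sending each $\mathcal{L}$-formula to the BBI-formula obtained by reading $\circ$ as $*$ (the Boolean connectives are shared); this is clearly computable, and since $\phi^{}_\mathcal{W}$ uses neither $I$ nor $\sepimp$, its satisfaction in any model is insensitive to the choice of unit set. I then claim that $\phi^{}_\mathcal{W}$ is a BBI-theorem if and only if $\mathcal{W}$ does not tile $\mathbb{N}^2$. For one direction, suppose $\mathcal{W}$ does not tile; by Lemma~\ref{lm:tiling}, $\phi^{}_\mathcal{W}$ is valid in every associative frame. The ternary relation of any relational commutative monoid is associative, so its frame-reduct (ignoring $E$) is an associative frame, whence $\phi^{}_\mathcal{W}$ is valid in every such BBI frame, and completeness of BBI yields $\phi^{}_\mathcal{W}\in\mathrm{BBI}$. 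Conversely, suppose $\mathcal{W}$ tiles; the contrapositive of Lemma~\ref{lm:tilingpowerset} gives a refutation of $\phi^{}_\mathcal{W}$ over $(\mathcal{P}(\mathbb{N}),\cup)$, which is a BBI frame, so soundness of BBI forces $\phi^{}_\mathcal{W}\notin\mathrm{BBI}$.

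Chaining these equivalences gives a computable map $\mathcal{W}\mapsto\phi^{}_\mathcal{W}$ with $\phi^{}_\mathcal{W}\in\mathrm{BBI}$ iff $\mathcal{W}$ does not tile, so any decision procedure for BBI would decide the complement of the tiling problem, contradicting Theorem~\ref{th:tilingproblem}. Equivalently, one can run the argument algebraically: $(\mathcal{P}(\mathbb{N}),\cup)^+$ is a complete, completely additive complex algebra, so its operation is residuated and it expands to a Boolean BI algebra with unit $\{\varnothing\}$; the $*$-reduct of any Boolean BI algebra is an associative BAO, and the two lemmas transfer verbatim through completeness of BBI with respect to Boolean BI algebras.

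The main thing to get right is the interface between the two semantics. One must check that the associativity convention of Definition~\ref{def:associativeframe} (first coordinate) and the relevant-logic convention standard in the bunched-implication literature (third coordinate) describe the same ``$y,z$ combine to $x$'' relation up to a relabelling of arguments, and that adjoining the unit set $E=\{\varnothing\}$ and the residual $\sepimp$ does not affect the truth of the unit- and wand-free formula $\phi^{}_\mathcal{W}$. Once this bookkeeping is in place, the only external input is the standard soundness-and-completeness theorem for BBI over relational commutative monoids, and the result is then immediate from Lemmas~\ref{lm:tiling} and~\ref{lm:tilingpowerset}.
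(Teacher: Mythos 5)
Your proof is correct and follows essentially the same route as the paper: the paper derives Theorem~\ref{th:BBI1} by noting that the $\{\land,\lor,\neg,\ast\}$-fragment of BBI extends $\mathbf{AK}_2$ (since the frame of any non-deterministic commutative monoid is associative) and is contained in $\mathrm{Log}(\mathcal{P}(\mathbb{N}),\cup)$ (since $(\mathcal{P}(\mathbb{N}),\cup,\varnothing)$ is a total monoid), then applies Theorem~\ref{th:undecidable}, which is exactly the sandwich argument you unfold via Lemmas~\ref{lm:tiling} and~\ref{lm:tilingpowerset}. Your extra bookkeeping on the coordinate convention and on the irrelevance of $I$ and $\sepimp$ to the unit- and wand-free formula $\phi^{}_\mathcal{W}$ is sound and matches the paper's remark that the result already holds for the $\mathcal{L}$-fragment.
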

We also mention that our proof implies the undecidability of several variants of BBI. \cite{Larchey-WendlingG10:lics} considers three versions of BBI: $\mathbf{BBI}_{\textnormal{ND}}$, $\mathbf{BBI}_{\textnormal{PD}}$, and $\mathbf{BBI}_{\textnormal{TD}}$, with the first being BBI and the latter two stronger systems, and shows them all to be undecidable. These logics are defined as the sets of theorems, or the consequence relations, over classes of structures: non-deterministic (or relational) monoids, partial monoids, and total monoids, respectively.

A \textit{non-deterministic (or relational) monoid} is a triple $(M, \cdot, \epsilon)$, where $\cdot: M \times M \to \mathcal{P}(M)$ is associative and commutative ($x \cdot y = y \cdot x$),\footnote{Note that commutativity is also assumed, even if not reflected in the naming convention. Not that it matters for undecidability, as our result shows.} and $\epsilon\in M$ is a neutral element ($\epsilon\cdot x=\{x\}$). A \textit{partial monoid} is a non-deterministic monoid $(M, \cdot, \epsilon)$ in which $\cdot$ is partial functional, i.e., $x \cdot y$ is either a singleton or empty. A \textit{total monoid} is a partial monoid in which $\cdot$ is (total) functional. Theorem~\ref{th:undecidable} shows that these three logics are undecidable.

\begin{Theorem}[\cite{Larchey-WendlingG10:lics}]\label{th:BBI2}
    $\mathbf{BBI}_{\textnormal{ND}}$, $\mathbf{BBI}_{\textnormal{PD}}$, and $\mathbf{BBI}_{\textnormal{TD}}$ are all undecidable
\end{Theorem}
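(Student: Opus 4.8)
The plan is to derive Theorem~\ref{th:BBI2} as a direct application of Theorem~\ref{th:undecidable}, exactly as the preceding corollaries were derived. The three logics $\mathbf{BBI}_{\textnormal{ND}}$, $\mathbf{BBI}_{\textnormal{PD}}$, and $\mathbf{BBI}_{\textnormal{TD}}$ are the theories of increasingly restrictive classes of commutative structures: non-deterministic monoids $(M,\cdot,\epsilon)$, partial monoids, and total monoids. Since each is a special case of the next ($\text{total} \Rightarrow \text{partial} \Rightarrow \text{non-deterministic}$), undecidability should follow if we exhibit a single structure that (i) lies in the most restrictive class, the total monoids, yet (ii) has $(\mathcal{P}(\mathbb{N}),\cup)^+$ in the variety it generates, so that Theorem~\ref{th:undecidable} applies uniformly.

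First I would observe that $(\mathcal{P}(\mathbb{N}),\cup,\varnothing)$ is itself a total monoid: union is associative and commutative, it is (total) functional, and $\varnothing$ is a neutral element since $\varnothing\cup X=X$. Hence the frame $(\mathcal{P}(\mathbb{N}),\cup)$ underlying Lemmas~\ref{lm:tiling} and~\ref{lm:tilingpowerset} is a total monoid once equipped with its unit, and a fortiori a partial and a non-deterministic monoid. The tiling formulas $\phi^{}_\mathcal{W}$ are formulas of the language $\mathcal{L}$, which has no constant for the monoidal unit $\epsilon$, so the presence of $\epsilon$ does not interfere with their evaluation; the $\circ$-clause reads off exactly the complex union operation on this structure.

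Next I would spell out the biimplication in the style of Theorem~\ref{th:undecidable}. For each finite tile set $\mathcal{W}$, the formula $\phi^{}_\mathcal{W}$ is valid over the class of total (resp. partial, non-deterministic) monoids iff $\mathcal{W}$ does not tile $\mathbb{N}^2$. For the forward direction (validity implies no tiling): if $\phi^{}_\mathcal{W}$ is valid over the whole class, then in particular it is valid on $(\mathcal{P}(\mathbb{N}),\cup)$, which sits in each class, so Lemma~\ref{lm:tilingpowerset} yields that $\mathcal{W}$ does not tile. For the backward direction (no tiling implies validity): every non-deterministic monoid is in particular an associative frame, so its complex algebra is an associative BAO (a member of $\mathsf{BSg}$); by Lemma~\ref{lm:tiling}, if $\mathcal{W}$ does not tile then $\mathsf{BSg}\vDash\phi^{}_\mathcal{W}\approx\top$, whence $\phi^{}_\mathcal{W}$ is valid on every such monoid, including the partial and total ones. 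Since the construction of $\phi^{}_\mathcal{W}$ from $\mathcal{W}$ is computable, the undecidability of the tiling problem (Theorem~\ref{th:tilingproblem}) transfers to each of the three logics.

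I do not anticipate a genuine obstacle here, as the argument is a packaging of Theorem~\ref{th:undecidable} together with the observation that $(\mathcal{P}(\mathbb{N}),\cup,\varnothing)$ is a total monoid and that the three monoid classes are nested. The one point requiring a little care is the role of the neutral element $\epsilon$: I would note explicitly that commutativity is built into all three monoid definitions (matching the commutativity of $\cup$) and that since $\mathcal{L}$ contains no unit constant, the validity of $\phi^{}_\mathcal{W}$ is unaffected by whether one reads these structures as monoids (with $\epsilon$) or merely as associative frames. If one prefers to argue at the level of theorem-sets versus consequence relations, the same structure works for both, since validity of $\phi^{}_\mathcal{W}\leftrightarrow\top$ is what is at stake. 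Thus the proof reduces to: observe $(\mathcal{P}(\mathbb{N}),\cup,\varnothing)$ is a total monoid, invoke the nesting of the three classes, and apply Theorem~\ref{th:undecidable}.
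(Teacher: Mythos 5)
Your proposal is correct and matches the paper's own proof, which rests on exactly the two observations you make: $(\mathcal{P}(\mathbb{N}),\cup,\varnothing)$ is a total monoid (hence lies in all three nested classes), and every non-deterministic monoid $(M,\cdot,\epsilon)$ yields an associative frame $(M,\cdot)$, so Theorem~\ref{th:undecidable} applies to the $\mathcal{L}$-fragment (with `$\circ$' read as `$\ast$') of each logic. Your extra care about the unit $\epsilon$ and the theorem-set versus consequence-relation distinction is fine but not needed beyond what the paper already states.
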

\begin{proof}
    Observe that (i) given a non-deterministic monoid $(M, \cdot, \epsilon)$, the pair $(M, \cdot)$ is an associative frame, and (ii) $(\mathcal{P}(\mathbb{N}), \cup, \varnothing)$ is a total monoid. As a result, even the $\mathcal{L}$-fragments of the logics $\mathbf{BBI}_{\textnormal{ND}}$, $\mathbf{BBI}_{\textnormal{PD}}$, and $\mathbf{BBI}_{\textnormal{TD}}$ are undecidable, where our `$\circ$' corresponds to their multiplicative separating conjunction `$\ast$'.
\end{proof}
Like \cite{Larchey-WendlingG10:lics}, in addition to BBI, \cite{BrotherstonK10:lics} also addresses the decision problem for multiple of its variants. Among more, they consider:
\begin{itemize}
    \item The class of \textit{separation models}, which are triples $(H, \cdot, E)$, where $\cdot :H\times H\to \mathcal{P}(H)$ is partial functional, associative, commutative and cancellative (if $z\cdot x=z\cdot y\neq \varnothing$, then $x=y$), and $E\subseteq H$ is a set of units in that $E\cdot x=\{x\}$.\footnote{In this paper's terminology, a separation model would perhaps be more appropriately referred to as a separation frame.}    
    \item The class of \textit{separation models with indivisible units}, where a separation model $(H, \cdot, E)$ has indivisible units if $x\cdot y\in E$ implies $x\in E$ and $y\in E$.
    \item The class of \textit{CBI-models}, which are quadruples $(H, \cdot, \{e\}, ^{-1})$ such that $(H, \cdot, \{e\})$ is a separation model and $^{-1}:H\to H$ satisfies $x\cdot x^{-1}=e\cdot e^{-1}=e^{-1}$.
    \item The class of \textit{CBI-models with indivisible unit}.
    \item The logic BBI$+$eW, which extends BBI and is included in the logic of all separation models with indivisible units.
    \item The logic CBI, which extends BBI.
    \item The logic CBI$+$eW, which extends CBI and is included in the logic of all CBI-models with indivisible unit.
\end{itemize}
As it stands, the undecidability of these is not an immediate consequence of our main result, since union is not cancellative. However, a simple adaptation of tiling lemma \ref{lm:tilingpowerset} resolves this. Let $(\mathcal{P}(\mathbb{N}), \uplus)$ denote the associative frame where $\uplus$ is the partial disjoint union operation, defined by $x=y\uplus z$ iff $x=y\cup z$ and $y\cap z=\varnothing$.
\begin{Lemma}
    If $\phi^{}_\mathcal{W}$ is valid in $(\mathcal{P}(\mathbb{N}), \uplus)$ (i.e., $(\mathcal{P}(\mathbb{N}), \uplus)^+\vDash \phi^{}_\mathcal{W}\approx \top$), then $\mathcal{W}$ does not tile $\mathbb{N}^2$.
\end{Lemma}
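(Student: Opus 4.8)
The plan is to prove the contrapositive: assuming $\mathcal{W}$ admits a tiling $\tau:\mathbb{N}^2\to\mathcal{W}$, I would produce a valuation on $(\mathcal{P}(\mathbb{N}), \uplus)$ under which $\phi^{}_\mathcal{W}$ fails at some point. The whole strategy is to reuse, essentially verbatim, the refuting model built in the proof of Lemma~\ref{lm:tilingpowerset} for $(\mathcal{P}(\mathbb{N}), \cup)$. The only structural change is that the ternary accessibility relation shrinks: under $\uplus$ we have $Rxyz$ iff $x=y\cup z$ \emph{and} $y\cap z=\varnothing$, so the $\uplus$-relation is exactly the restriction of the $\cup$-relation to disjoint pairs. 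Passing from $\cup$ to $\uplus$ therefore makes the binary diamond $\circ$ harder to satisfy (fewer witnesses) while making its negation easier, and the task is to show the refutation survives this trade-off.

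The preparatory step is to isolate, from the construction behind Lemma~\ref{lm:tilingpowerset}, the coding of the grid into $\mathcal{P}(\mathbb{N})$ and the finitely many $\circ$-decompositions on which the refutation actually rests. The point to secure is that this coding can be taken so that every existential $\circ$-witness used in the refutation is realised by a \emph{disjoint} union. Concretely, coding a cell $(m,n)$ by a set in which the horizontal and vertical successor moves each adjoin a \emph{fresh} element of $\mathbb{N}$ (for instance even numbers tracking the column and odd numbers tracking the row) guarantees that each step $(m,n)\mapsto(m+1,n)$ and $(m,n)\mapsto(m,n+1)$, and hence each decomposition the formula forces to exist, is a disjoint union. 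For such witnesses $x=y\cup z$ already entails $x=y\uplus z$, so the positively-occurring $\circ$-subformulas retain their witnesses.

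The verification then proceeds as a simultaneous induction mirroring the one in Lemma~\ref{lm:tilingpowerset}, now carrying the extra disjointness bookkeeping: at each positively-occurring $\circ$, I check that the demanded witness is a disjoint decomposition (so it still exists under $\uplus$), while at each universally/negatively-occurring $\circ$ — the clauses asserting that certain coded points admit \emph{no} decomposition of a prescribed shape — preservation is automatic and in fact a fortiori, since $\uplus$ offers only a subset of the decompositions available under $\cup$. Concluding the induction, the very point that refuted $\phi^{}_\mathcal{W}$ over $(\mathcal{P}(\mathbb{N}), \cup)$ still refutes it over $(\mathcal{P}(\mathbb{N}), \uplus)$.

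I expect the main obstacle to be precisely this interplay of polarities: shrinking $\cup$ to $\uplus$ simultaneously threatens the existential witnesses (which must survive) and aids the universal constraints (which must be preserved), so a naive monotonicity appeal is not available and the two directions must be controlled together. The real content lies in certifying that the grid coding can be arranged so that \emph{all} the witnesses demanded by $\phi^{}_\mathcal{W}$ are genuinely disjoint unions; once successor markers are chosen fresh this becomes routine bookkeeping, but it is where the argument is non-trivial. As the payoff, $\uplus$ is partial functional and cancellative with $\varnothing$ as an indivisible unit, so together with Lemma~\ref{lm:tiling} (which applies to the Boolean semigroup $(\mathcal{P}(\mathbb{N}), \uplus)^+$ as to any associative BAO) this yields the full biconditional needed to reach the cancellative separation-model and CBI variants.
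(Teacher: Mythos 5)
Your proposal is correct and takes essentially the same route as the paper, whose proof of this lemma consists precisely of the observation that the argument for Lemma~\ref{lm:tilingpowerset} carries over to $(\mathcal{P}(\mathbb{N}), \uplus)$: your disjointness bookkeeping spells out why, since every existential witness in that proof (the split $\mathbb{N}=2\mathbb{N}\cup(2\mathbb{N}+1)$, the singleton successor markers $\{n\}$, and the decompositions $Z'=(X\setminus\{n\})\cup Y$ with $X$ even and $Y$ odd) is already a disjoint union, while the negatively occurring $\circ$-clauses only gain from the smaller relation. If anything you are more cautious than necessary: under $\uplus$ the case $n\in Z'$ in the gamma-verification simply cannot arise, so that branch of the original proof drops out rather than needing to be controlled.
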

\begin{proof}
    Reviewing the proof of Lemma \ref{lm:tilingpowerset} in Section~\ref{sec:tilinglemmas}, one sees that the same reasoning applies to $(\mathcal{P}(\mathbb{N}), \uplus)$.
\end{proof}
\begin{Theorem}\label{th:undecidable2}
    \textit{ } \vspace{.2cm}
    
        \begin{minipage}{0.45\textwidth}
            Let $\mathsf{V}$ be a variety of associative BAOs. \\ If $\mathsf{V}$ contains $(\mathcal{P}(\mathbb{N}), \uplus)^+$, then $\mathsf{V}$ is undecidable.
        \end{minipage}%
        \hfill
        \begin{minipage}{0.45\textwidth}
            Let $\mathbf{L}$ be an associative normal modal logic. \\ If $\mathbf{L}\subseteq \textnormal{Log}(\mathcal{P}(\mathbb{N}), \uplus)$, then $\mathbf{L}$ is undecidable.
        \end{minipage}
\end{Theorem}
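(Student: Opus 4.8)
The plan is to repeat, essentially verbatim, the argument establishing Theorem~\ref{th:undecidable}, with $\uplus$ in place of $\cup$ and the $\uplus$-analogue of Lemma~\ref{lm:tilingpowerset} (stated immediately above) in place of Lemma~\ref{lm:tilingpowerset} itself. The only genuinely new point to check is that the hypotheses are coherent, i.e.\ that $(\mathcal{P}(\mathbb{N}),\uplus)^+$ is indeed an associative BAO. This reduces to verifying that $\uplus$, viewed as the ternary relation $R$ given by $Rxyz$ iff $x=y\cup z$ and $y\cap z=\varnothing$, is associative. Unpacking the definitions, both $Rx(ab)c$ and $Rxa(bc)$ are seen to be equivalent to the single condition that $a,b,c$ are pairwise disjoint with $x=a\cup b\cup c$, so associativity holds; hence $(\mathcal{P}(\mathbb{N}),\uplus)\in\mathsf{A}$, and therefore $(\mathcal{P}(\mathbb{N}),\uplus)^+\in\mathsf{BSg}$.

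With this in hand, suppose $(\mathcal{P}(\mathbb{N}),\uplus)^+\in\mathsf{V}\subseteq\mathsf{BSg}$ [or $\mathbf{AK}_2\subseteq\mathbf{L}\subseteq\textnormal{Log}(\mathcal{P}(\mathbb{N}),\uplus)$]. I would then prove the biconditional that $\phi_\mathcal{W}\approx\top$ is valid in $\mathsf{V}$ [$\phi_\mathcal{W}\in\mathbf{L}$] if and only if $\mathcal{W}$ does not tile $\mathbb{N}^2$. For the right-to-left direction, if $\mathcal{W}$ fails to tile then Lemma~\ref{lm:tiling} gives $\mathsf{BSg}\vDash\phi_\mathcal{W}\approx\top$, and since $\mathsf{V}\subseteq\mathsf{BSg}$ the equation holds throughout $\mathsf{V}$ [equivalently $\phi_\mathcal{W}\in\mathbf{AK}_2\subseteq\mathbf{L}$]. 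For left-to-right I would argue contrapositively: if $\mathcal{W}$ does tile, then the $\uplus$-version of Lemma~\ref{lm:tilingpowerset} (contrapositively) yields $(\mathcal{P}(\mathbb{N}),\uplus)^+\nvDash\phi_\mathcal{W}\approx\top$; as $(\mathcal{P}(\mathbb{N}),\uplus)^+\in\mathsf{V}$, the equation fails in $\mathsf{V}$ [and $\phi_\mathcal{W}\notin\textnormal{Log}(\mathcal{P}(\mathbb{N}),\uplus)$, so since $\mathbf{L}\subseteq\textnormal{Log}(\mathcal{P}(\mathbb{N}),\uplus)$ we get $\phi_\mathcal{W}\notin\mathbf{L}$].

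Finally, since $\mathcal{W}\mapsto\phi_\mathcal{W}$ is computable and the tiling problem is undecidable (Theorem~\ref{th:tilingproblem}), the established biconditional transports undecidability to the equational theory of $\mathsf{V}$ [to membership in $\mathbf{L}$], completing the proof.

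I do not expect a real obstacle here: the substance of the result lives entirely in the two tiling lemmas, and in particular in the claim---already recorded above---that the proof of Lemma~\ref{lm:tilingpowerset} transfers unchanged to $(\mathcal{P}(\mathbb{N}),\uplus)$. The only thing warranting a second glance is precisely that transfer: one should confirm that the refuting valuation and point constructed from a tiling in the proof of Lemma~\ref{lm:tilingpowerset} invoke the ambient operation only through the relation ``$x$ is a union of $y$ and $z$'' in a way that remains available when unions are required to be disjoint, so that replacing arbitrary unions by disjoint unions neither removes a needed witness nor introduces a spurious one. Granting that, the present theorem is a formal consequence of Lemma~\ref{lm:tiling} and the $\uplus$-lemma, exactly as Theorem~\ref{th:undecidable} was a consequence of Lemmas~\ref{lm:tiling} and~\ref{lm:tilingpowerset}.
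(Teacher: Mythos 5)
Your proposal is correct and matches the paper's (implicit) proof exactly: Theorem~\ref{th:undecidable2} is obtained in the paper by running the proof of Theorem~\ref{th:undecidable} verbatim with the $\uplus$-version of Lemma~\ref{lm:tilingpowerset} in place of Lemma~\ref{lm:tilingpowerset}, together with Lemma~\ref{lm:tiling}. Your explicit check that $\uplus$ is associative---both $Rx(ab)c$ and $Rxa(bc)$ unpacking to pairwise disjointness of $a,b,c$ with $x=a\cup b\cup c$---is a correct detail the paper leaves tacit.
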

\begin{Theorem}[\cite{BrotherstonK10:lics}]\label{th:BBI3}
    The logics \textnormal{BBI$+$eW}, \textnormal{CBI}, and \textnormal{CBI$+$eW}, and the logics of (i.e., the validity problem for) all of the above-listed classes are undecidable.
\end{Theorem}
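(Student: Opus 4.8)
The plan is to reduce the entire theorem to a single structural observation together with Theorem~\ref{th:undecidable2}, exactly as the proof of Theorem~\ref{th:BBI2} reduced its claims to Theorem~\ref{th:undecidable}. The whole point of passing from $\cup$ to the disjoint union $\uplus$ is that $\uplus$ is \emph{cancellative}, which is precisely the property $\cup$ lacks and which separation- and CBI-models demand: if $z \uplus x = z \uplus y$ then both sides are defined, so $z \cap x = z \cap y = \varnothing$ and $z \cup x = z \cup y$, whence deleting $z$ gives $x = y$. Since the variant of Lemma~\ref{lm:tilingpowerset} for $\uplus$ and Theorem~\ref{th:undecidable2} are already in place, it suffices to exhibit one powerset structure that lands inside every listed class at once and whose $\mathcal{L}$-reduct is $(\mathcal{P}(\mathbb{N}), \uplus)$.

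Concretely, I would first check that $(\mathcal{P}(\mathbb{N}), \uplus, \{\varnothing\})$ is a separation model with indivisible units. Partial functionality is immediate ($x \uplus y = x \cup y$ when $x \cap y = \varnothing$, and is undefined otherwise); associativity and commutativity are clear; cancellativity is the computation above; and $\varnothing$ is a unit since $\varnothing \uplus x = x$. Indivisibility of the unit is the observation that $x \uplus y = \varnothing$ forces $x = y = \varnothing$. I would then upgrade this to a CBI-model with indivisible unit $(\mathcal{P}(\mathbb{N}), \uplus, \{\varnothing\}, {}^c)$ by taking complementation as the involution: with $e = \varnothing$ and $e^{-1} = \varnothing^c = \mathbb{N}$, we have $x \uplus x^c = x \cup x^c = \mathbb{N} = e^{-1}$ for every $x$, and likewise $\varnothing \uplus \mathbb{N} = \mathbb{N} = e^{-1}$, so the CBI-axiom $x \cdot x^{-1} = e \cdot e^{-1} = e^{-1}$ holds. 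Thus the single structure $(\mathcal{P}(\mathbb{N}), \uplus, \{\varnothing\}, {}^c)$ simultaneously witnesses membership in all four model classes.

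With these memberships in hand, the argument is uniform. For each listed class $\mathsf{C}$, our structure lies in $\mathsf{C}$, so restricting to the $\mathcal{L}$-fragment (reading $\circ$ as the separating conjunction $\ast$) yields $\mathrm{Log}(\mathsf{C})|_\mathcal{L} \subseteq \mathrm{Log}(\mathcal{P}(\mathbb{N}), \uplus)$; and since every frame reduct $(H,\cdot)$ of a model in $\mathsf{C}$ is an associative frame, we have $\mathbf{AK}_2 = \mathrm{Log}(\mathsf{A})|_\mathcal{L} \subseteq \mathrm{Log}(\mathsf{C})|_\mathcal{L}$. Theorem~\ref{th:undecidable2} then makes $\mathrm{Log}(\mathsf{C})|_\mathcal{L}$ undecidable, hence the full logic $\mathrm{Log}(\mathsf{C})$ is undecidable as well. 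For the three syntactic systems I would use the same sandwich: BBI$+$eW is included in the logic of all separation models with indivisible units, CBI is sound over CBI-models, and CBI$+$eW is included in the logic of all CBI-models with indivisible unit---each time a class containing our structure---so each has its $\mathcal{L}$-fragment contained in $\mathrm{Log}(\mathcal{P}(\mathbb{N}), \uplus)$; and each extends BBI, in which associativity and additivity of $\ast$ are derivable, giving the lower bound $\mathbf{AK}_2$. Theorem~\ref{th:undecidable2} finishes each case.

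The main obstacle is conceptual rather than computational: recognizing that disjoint union repairs the sole defect of $\cup$ (failure of cancellativity) and that the very same powerset structure, augmented with complementation as the involution, belongs to all of the separation- and CBI-model classes simultaneously. Once that observation is isolated, the adaptation of Lemma~\ref{lm:tilingpowerset} to $\uplus$ and Theorem~\ref{th:undecidable2} do the real work, and the remaining steps are routine verifications of the defining closure conditions and of the soundness/inclusion facts quoted from the cited literature.
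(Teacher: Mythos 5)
Your proposal is correct and follows essentially the same route as the paper: the paper's proof likewise observes that the $\mathcal{L}$-fragments extend $\mathbf{AK}_2$, that $(\mathcal{P}(\mathbb{N}), \uplus, \{\varnothing\}, {}^c)$ is a CBI-model with indivisible unit (hence lies in all the listed classes), and then applies Theorem~\ref{th:undecidable2}. You merely spell out the routine verifications (cancellativity, unit indivisibility, the involution axiom) that the paper leaves implicit.
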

\begin{proof}
    Clearly, the $\mathcal{L}$-fragment (where `$\circ$' is `$\ast$') of each of these logics extends $\mathbf{AK}_2$. Moreover, $(\mathcal{P}(\mathbb{N}), \uplus, \{\varnothing\}, ^c)$ is a CBI-model with indivisible unit. Hence, the $\mathcal{L}$-fragment of each of the logics is included in $\textnormal{Log}(\mathcal{P}(\mathbb{N}), \uplus)$. Thus, Theorem \ref{th:undecidable2} applies and shows that the logics are undecidable already in their $\mathcal{L}$-fragments.
\end{proof}
As a concluding comment on propositional separation logics and BBI, observe that this not only offers a new proof of the above undecidability results, but also establishes a new result: the $\{\land, \lor, \neg, \ast\}$-fragments of these logics are undecidable. In particular, the logics remain undecidable in the absence of the separating implication (or magic wand) $\sepimp$.
\\\\
On another note, our setting and result connect with Skvortsov's logic of infinite problems \cite{Skvortsov1979}, an analogue of Medvedev's logic of finite problems \cite{Medvedev1962, Medvedev1966}. In their modal versions, these two logics are defined as the sets of theorems, or the consequence relations, in the basic modal language $\{\land, \lor, \neg, \Diamond\}$, interpreted over the classes of Skvortsov and Medvedev frames, respectively. 

A \textit{Skvortsov frame} [resp. \textit{Medvedev frame}] is a pair $(\mathcal{P}(X)\setminus\{\varnothing\}, \supseteq )$, where $X$ is a non-empty [finite] set and `$\supseteq$' denotes the superset relation on $\mathcal{P}(X)\setminus\{\varnothing\}$. The decidability of either logic (whether in their modal or intuitionistic propositional version) remains long-standing open questions.

By considering the union operation on $\mathcal{P}(X)\setminus\{\varnothing\}$, we obtain associative frames $(\mathcal{P}(X)\setminus\{\varnothing\}, \cup)$, where, given a model $\mathbb{M}=(\mathcal{P}(X)\setminus\{\varnothing\}, \cup, V)$, the semantics of $\circ$ becomes:
\begin{align*}
        &\mathbb{M}, x\Vdash \varphi\circ\psi && \text{iff} && \text{there exist $y,z\in \mathcal{P}(X)\setminus\{\varnothing\}$ such that } \mathbb{M}, y\Vdash \varphi; \\
        & && &&\mathbb{M}, z\Vdash \psi; \text{ and $x=y\cup z$.}
\end{align*}
In particular, it is readily verified that 
\begin{align*}
        &\mathbb{M}, x\Vdash \varphi\circ\top && \text{iff} && \text{there exist $y\in \mathcal{P}(X)\setminus\{\varnothing\}$ such that $x\supseteq y$ and } \mathbb{M}, y\Vdash \varphi.
\end{align*}
That is, $\Diamond$ is definable in our language $\mathcal{L}$. Hence, the logic 
    $$\textnormal{Log}\big(\{(\mathcal{P}(X)\setminus\{\varnothing\}, \cup)\mid X\neq \varnothing\}\big)$$ 
in the language $\mathcal{L}$ is a conservative extension of Skvortsov's modal logic.\footnote{Let us also note that a partial converse translation can be defined using that, on Skvortsov models,
    \[
        x\Vdash \Box\Diamond\Box\varphi\circ \Box\Diamond\Box\psi   \quad \text{iff}  \quad x\Vdash \Diamond \Box \varphi\land \Diamond\Box\psi\land\Box\Diamond\Box(\varphi\lor\psi).
    \]
Cf. Theorem \ref{th:undecidable2}, it may also be of interest that if $\circ$ is interpreted w.r.t. the disjoint union operation $\uplus$ instead, then for $\varphi, \psi$ incompatible (say, $\varphi=\varphi'\land p, \psi=\psi'\land\neg p$), 
    \[
        x\Vdash \Box\varphi\circ \Box\psi  \quad \text{iff}  \quad x\Vdash \Diamond \Box \varphi\land \Diamond\Box\psi\land\Box(\Diamond\Box(\varphi\lor\psi)\land [\Box\Diamond\Box\varphi\to \varphi]\land  [\Box\Diamond\Box\psi\to \psi]).
    \]}

A slight adjustment of tiling lemma \ref{lm:tilingpowerset} yields the undecidability of this extension:
\begin{Lemma}
    If $\phi^{}_\mathcal{W}$ is valid in $(\mathcal{P}(\mathbb{N})\setminus\{\varnothing\}, \cup)$ (i.e., $(\mathcal{P}(\mathbb{N})\setminus\{\varnothing\}, \cup)^+\vDash \phi^{}_\mathcal{W}\approx \top$), then $\mathcal{W}$ does not tile $\mathbb{N}^2$.
\end{Lemma}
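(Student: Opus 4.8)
The plan is to adapt the proof of Lemma~\ref{lm:tilingpowerset} to the frame $(\mathcal{P}(\mathbb{N})\setminus\{\varnothing\}, \cup)$, and the key observation is that the two frames $(\mathcal{P}(\mathbb{N}), \cup)$ and $(\mathcal{P}(\mathbb{N})\setminus\{\varnothing\}, \cup)$ differ only in whether $\varnothing$ is retained as a point. First I would recall the overall strategy of Lemma~\ref{lm:tilingpowerset}: one assumes $\mathcal{W}$ \emph{does} tile $\mathbb{N}^2$ and uses the tiling to construct a valuation on the relevant powerset frame that refutes $\phi^{}_\mathcal{W}$ at some point; contrapositively, validity of $\phi^{}_\mathcal{W}$ in the frame forces non-tileability. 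So the task reduces to checking that the refuting model built over $(\mathcal{P}(\mathbb{N}), \cup)$ can be transplanted to $(\mathcal{P}(\mathbb{N})\setminus\{\varnothing\}, \cup)$ with the satisfaction of $\phi^{}_\mathcal{W}$ at the witnessing point preserved.

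The main step is to verify that removing the empty set does not disturb the construction. The union operation restricted to $\mathcal{P}(\mathbb{N})\setminus\{\varnothing\}$ is still well-defined and associative, since the union of two nonempty sets is nonempty, so $(\mathcal{P}(\mathbb{N})\setminus\{\varnothing\}, \cup)$ is a genuine associative frame and the subframe inclusion respects $\cup$ except at triples involving $\varnothing$. I would then examine where $\varnothing$ could matter in the refuting model from Lemma~\ref{lm:tilingpowerset}: it can only matter if the witnesses $y,z$ appearing in the $\circ$-clause for satisfied subformulas of $\phi^{}_\mathcal{W}$, or the base point at which $\phi^{}_\mathcal{W}$ is refuted, are ever taken to be $\varnothing$. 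The expectation is that the coding of tiles and grid positions uses only nonempty sets (the quadrant $\mathbb{N}^2$ being coded by genuine elements), so the entire refuting model already lives inside $\mathcal{P}(\mathbb{N})\setminus\{\varnothing\}$; if any use of $\varnothing$ does appear, one replaces it by a fixed nonempty ``dummy'' set coloured so as not to satisfy any relevant tile-predicate, and checks the $\circ$-witnesses still match under $\cup$.

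The hard part will be confirming that the $\circ$-clause witnesses used in the original refutation never genuinely require $\varnothing$: in the full frame a point $x$ can satisfy $\varphi\circ\psi$ via the decomposition $x = x\cup\varnothing$, and such a decomposition is unavailable in the restricted frame. I would therefore trace, in the construction underlying Lemma~\ref{lm:tilingpowerset}, each occurrence of the $\circ$-modality in $\phi^{}_\mathcal{W}$ and confirm that the intended witnesses are a genuine pair of nonempty sets whose union is $x$, rather than the trivial $x\cup\varnothing$ split. Provided the tiling formula is built so that its $\circ$-subformulas are always witnessed by nontrivial disjoint-support decompositions (which is natural, since the formula encodes adjacency of distinct grid cells), this causes no difficulty, and the same reasoning that proves Lemma~\ref{lm:tilingpowerset} carries over verbatim. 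Hence, as the author indicates, reviewing the proof of Lemma~\ref{lm:tilingpowerset} suffices: the construction restricts to $(\mathcal{P}(\mathbb{N})\setminus\{\varnothing\}, \cup)$, yielding that validity of $\phi^{}_\mathcal{W}$ in this frame implies $\mathcal{W}$ does not tile $\mathbb{N}^2$.
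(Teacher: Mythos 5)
Your proposal is correct and takes essentially the same approach as the paper, whose proof of this lemma is precisely the remark that the argument for Lemma~\ref{lm:tilingpowerset} applies verbatim to $(\mathcal{P}(\mathbb{N})\setminus\{\varnothing\}, \cup)$: all sets occurring in that construction (the cofinite-in-$2\mathbb{N}$ or $2\mathbb{N}+1$ valuation sets, the singletons $\{n\}$, the refuting point $\mathbb{N}$, and every $\circ$-witness such as $\{2n\}\cup(X\setminus\{2n\})$ or $X\cup Y$ with $X\subseteq 2\mathbb{N}$, $Y\subseteq 2\mathbb{N}+1$) are nonempty, so the refuting model already lives in the restricted frame. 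Your ``dummy set'' fallback is never needed, since the witnessing decompositions are exactly the nontrivial disjoint splits you anticipate.
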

\begin{proof}
    The proof of Lemma \ref{lm:tilingpowerset}, see Section~\ref{sec:tilinglemmas}, applies verbatim to $(\mathcal{P}(\mathbb{N})\setminus\{\varnothing\}, \cup)$.
\end{proof}
\begin{Theorem}\label{th:undecidable3}
    \textit{ } \vspace{.2cm}
    
        \begin{minipage}{0.45\textwidth}
            Let $\mathsf{V}$ be a variety of associative BAOs. \\ If $\mathsf{V}$ contains $(\mathcal{P}(\mathbb{N})\setminus\{\varnothing\}, \cup)^+$, then $\mathsf{V}$ is undecidable.
        \end{minipage}%
        \hfill
        \begin{minipage}{0.45\textwidth}
            Let $\mathbf{L}$ be an associative normal modal logic. \\ If $\mathbf{L}\subseteq \textnormal{Log}(\mathcal{P}(\mathbb{N})\setminus\{\varnothing\}, \cup)$, then $\mathbf{L}$ is undecidable.
        \end{minipage}
\end{Theorem}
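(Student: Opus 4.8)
The plan is to mirror the proof of Theorem~\ref{th:undecidable} essentially verbatim, substituting $(\mathcal{P}(\mathbb{N})\setminus\{\varnothing\}, \cup)$ for $(\mathcal{P}(\mathbb{N}), \cup)$ throughout and invoking the variant of Lemma~\ref{lm:tilingpowerset} just established (valid in $(\mathcal{P}(\mathbb{N})\setminus\{\varnothing\}, \cup)$) in place of Lemma~\ref{lm:tilingpowerset} itself. First I would record that $(\mathcal{P}(\mathbb{N})\setminus\{\varnothing\}, \cup)$ is genuinely an associative frame: the union of two non-empty subsets of $\mathbb{N}$ is again non-empty, so $\cup$ restricts to a well-defined (functional) binary operation on $\mathcal{P}(\mathbb{N})\setminus\{\varnothing\}$, and it remains associative. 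Consequently its complex algebra $(\mathcal{P}(\mathbb{N})\setminus\{\varnothing\}, \cup)^+$ is an associative BAO, i.e., lies in $\mathsf{BSg}$.

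Next, in the algebraic case I would assume
\[
    (\mathcal{P}(\mathbb{N})\setminus\{\varnothing\}, \cup)^+\in \mathsf{V}\subseteq \mathsf{BSg} \quad [\text{or } \mathbf{AK}_2\subseteq \mathbf{L}\subseteq \textnormal{Log}(\mathcal{P}(\mathbb{N})\setminus\{\varnothing\}, \cup) \text{ in the logical case}],
\]
and establish the key biconditional: $\phi^{}_\mathcal{W}\approx \top$ is valid in $\mathsf{V}$ [$\phi^{}_\mathcal{W}\in \mathbf{L}$] if and only if $\mathcal{W}$ does not tile $\mathbb{N}^2$. For the forward direction, validity in $\mathsf{V}$ forces validity in its member $(\mathcal{P}(\mathbb{N})\setminus\{\varnothing\}, \cup)^+$, whence the established variant of the tiling lemma yields that $\mathcal{W}$ does not tile. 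For the backward direction, if $\mathcal{W}$ does not tile, then Lemma~\ref{lm:tiling} gives $\mathsf{BSg}\vDash \phi^{}_\mathcal{W}\approx \top$, and since $\mathsf{V}\subseteq \mathsf{BSg}$ this validity is inherited by $\mathsf{V}$.

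Finally, since the map $\mathcal{W}\mapsto \phi^{}_\mathcal{W}$ is computable, this biconditional exhibits a reduction of the (undecidable, by Theorem~\ref{th:tilingproblem}) tiling problem to the equational theory of $\mathsf{V}$ [to membership in $\mathbf{L}$], so undecidability transfers. I expect no genuine obstacle here: all the substantive work has already been carried out in the two tiling lemmas and their stated variant, and the only point needing care is the bookkeeping check that deleting $\varnothing$ from the carrier destroys neither associativity nor membership in $\mathsf{BSg}$—which it does not, precisely because $\cup$ never produces $\varnothing$ from non-empty arguments.
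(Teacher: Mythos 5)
Your proposal is correct and takes essentially the same route as the paper, which proves Theorem~\ref{th:undecidable3} by rerunning the proof of Theorem~\ref{th:undecidable} with the adjusted tiling lemma for $(\mathcal{P}(\mathbb{N})\setminus\{\varnothing\}, \cup)$ in place of Lemma~\ref{lm:tilingpowerset}. Your explicit check that $\cup$ restricts to an associative (indeed functional) operation on $\mathcal{P}(\mathbb{N})\setminus\{\varnothing\}$, so that the frame and its complex algebra are well-defined members of the relevant classes, is exactly the bookkeeping the paper leaves tacit.
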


\begin{Theorem}
    The conservative extension of Skvortsov's modal logic, 
    \[
        \textnormal{Log}\big(\big\{(\mathcal{P}(X)\setminus\{\varnothing\}, \cup)\mid X\neq \varnothing\big\}\big),
    \] is undecidable.
\end{Theorem}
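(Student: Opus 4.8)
The plan is to reduce the tiling problem to this Skvortsov-extension logic, mirroring the structure of Theorem~\ref{th:undecidable3}. The key observation is that the class of frames $\{(\mathcal{P}(X)\setminus\{\varnothing\}, \cup)\mid X\neq \varnothing\}$ contains the single frame $(\mathcal{P}(\mathbb{N})\setminus\{\varnothing\}, \cup)$ as a member, obtained by taking $X=\mathbb{N}$. Consequently, $\textnormal{Log}\big(\{(\mathcal{P}(X)\setminus\{\varnothing\}, \cup)\mid X\neq \varnothing\}\big)\subseteq \textnormal{Log}(\mathcal{P}(\mathbb{N})\setminus\{\varnothing\}, \cup)$, since validity over a class of frames entails validity over any particular frame in that class. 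Moreover, every frame $(\mathcal{P}(X)\setminus\{\varnothing\}, \cup)$ is an associative frame (union is associative), so the logic contains $\mathbf{AK}_2=\textnormal{Log}(\mathsf{A})$; that is, $\mathbf{AK}_2\subseteq \textnormal{Log}\big(\{(\mathcal{P}(X)\setminus\{\varnothing\}, \cup)\mid X\neq \varnothing\}\big)$.

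Writing $\mathbf{L}$ for this conservative extension, I would then verify that $\mathbf{L}$ satisfies the hypotheses of the logical half of Theorem~\ref{th:undecidable3}. Combining the two containments above yields
\[
    \mathbf{AK}_2\subseteq \mathbf{L}\subseteq \textnormal{Log}(\mathcal{P}(\mathbb{N})\setminus\{\varnothing\}, \cup),
\]
so $\mathbf{L}$ is an associative normal modal logic included in $\textnormal{Log}(\mathcal{P}(\mathbb{N})\setminus\{\varnothing\}, \cup)$. Theorem~\ref{th:undecidable3} applies directly and delivers the undecidability of $\mathbf{L}$. Concretely, the reduction is again via the tiling formulas: by the adapted tiling lemma immediately preceding Theorem~\ref{th:undecidable3}, if $\phi^{}_\mathcal{W}$ is valid in $(\mathcal{P}(\mathbb{N})\setminus\{\varnothing\}, \cup)$ then $\mathcal{W}$ does not tile $\mathbb{N}^2$; and by Lemma~\ref{lm:tiling}, if $\mathcal{W}$ does not tile $\mathbb{N}^2$ then $\phi^{}_\mathcal{W}\in \mathbf{AK}_2\subseteq \mathbf{L}$. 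Hence $\phi^{}_\mathcal{W}\in \mathbf{L}$ if and only if $\mathcal{W}$ fails to tile the quadrant, and undecidability of the tiling problem (Theorem~\ref{th:tilingproblem}) transfers to $\mathbf{L}$.

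I do not anticipate a genuine obstacle here, as the argument is essentially a specialisation of Theorem~\ref{th:undecidable3} to a particular logic. The only point requiring a moment's care is confirming that $\mathbf{L}$ is genuinely a \emph{normal modal logic} extending $\mathbf{AK}_2$ in our language $\mathcal{L}$---that is, that the logic of the class of Skvortsov-style union frames is closed under the usual rules and axioms. This is immediate from the fact that each such frame is associative, so their common logic contains $\textnormal{Log}(\mathsf{A})=\mathbf{AK}_2$, and the logic of any class of frames is automatically a normal modal logic. With this in place, the result follows by direct appeal to Theorem~\ref{th:undecidable3}.
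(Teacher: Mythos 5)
Your proposal is correct and takes essentially the same route as the paper's own proof: both observe that every frame $(\mathcal{P}(X)\setminus\{\varnothing\},\cup)$ is associative (giving $\mathbf{AK}_2\subseteq \mathbf{L}$) and that $(\mathcal{P}(\mathbb{N})\setminus\{\varnothing\},\cup)$ is a member of the class (giving $\mathbf{L}\subseteq \textnormal{Log}(\mathcal{P}(\mathbb{N})\setminus\{\varnothing\},\cup)$), so Theorem~\ref{th:undecidable3} applies directly. Your additional unfolding of the tiling reduction via Lemma~\ref{lm:tiling} and the adapted tiling lemma is accurate but merely restates what Theorem~\ref{th:undecidable3} already packages.
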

\begin{proof}
    Frames of the form $(\mathcal{P}(X)\setminus\{\varnothing\}, \cup)$ are associative, and $(\mathcal{P}(\mathbb{N})\setminus\{\varnothing\}, \cup)$ is one such frame.
\end{proof}

Lastly, leveraging one impossibility result to prove another,  we answer a somewhat different open question. Van Benthem \cite{Benthem17:manus, Benthem19:jpl} shows that truthmaker semantics can be faithfully translated into modal information logic over semilattices (see also \cite{Knudstorp2023:synthese} for further work), and in \cite{Benthem17:manus}, he poses the question of whether a converse translational embedding exists---a question reiterated in \cite{Benthem2024:kripke}. We answer this in the negative:

\begin{Theorem}\label{th:notranslation}
    There is no computable function $f$ that maps formulas $\varphi$ in the language of modal information logic to a finite set of premises $\Phi_f$ and a conclusion $\varphi_f$ in the language of truthmaker semantics such that, for all $\varphi$,
    \begin{center}
        $\varphi$ is valid in modal information logic over semilattices iff $\Phi_f$ entails $\varphi_f$ in truthmaker semantics.
    \end{center}
\end{Theorem}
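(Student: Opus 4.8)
The plan is to derive this impossibility result from the undecidability of the modal information logic of semilattices, Theorem~\ref{th:MIL}, together with the decidability of the entailment relation of truthmaker semantics. The key observation is that a computable translation in the stated direction, combined with a decision procedure on the truthmaker side, would yield a decision procedure for $\mathrm{Log}(\mathsf{SL})$, contradicting Theorem~\ref{th:MIL}. So the argument is a standard reduction-of-decidability: if $f$ were computable and validity in modal information logic over semilattices reduced via $f$ to entailment $\Phi_f \vDash \varphi_f$ in truthmaker semantics, then to decide whether an arbitrary $\varphi$ is valid it would suffice to compute $f(\varphi) = (\Phi_f, \varphi_f)$ and then decide the truthmaker entailment.

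First I would make explicit the two ingredients the proof needs. The first is Theorem~\ref{th:MIL}: validity in modal information logic over semilattices is undecidable. The second, which is the load-bearing external fact, is that the (finitary) entailment relation of truthmaker semantics is decidable --- that is, there is an algorithm that, given a finite premise set $\Phi$ and a conclusion $\chi$ in the truthmaker language, decides whether $\Phi$ entails $\chi$. I would cite the relevant source (e.g.\ \cite{FineJ19:rsl, Knudstorp2023:synthese}) for this decidability. With both in hand, the contradiction is immediate: suppose toward a contradiction that a computable $f$ with the stated equivalence exists. Given any modal-information-logic formula $\varphi$, compute $f(\varphi)=(\Phi_f,\varphi_f)$ --- possible since $f$ is computable --- and then run the truthmaker entailment decision procedure on the instance $\Phi_f \vDash \varphi_f$. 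By the defining equivalence of $f$, this procedure answers \emph{yes} exactly when $\varphi$ is valid over semilattices. Composing the two algorithms gives a decision procedure for validity in modal information logic over semilattices, contradicting Theorem~\ref{th:MIL}. Hence no such $f$ exists.

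The main obstacle is not the reduction itself, which is routine, but ensuring that the decidability of truthmaker entailment is genuinely available in the relevant (finitary, finite-premise) form, and that it matches the shape of the conclusion $(\Phi_f,\varphi_f)$ produced by any candidate translation. The argument only constrains translations that land in a decidable fragment, so I would be careful to state the decidability result for precisely the notion of entailment targeted in the theorem's formulation --- a finite set of premises entailing a single conclusion --- and to note that this finitariness is exactly what the theorem's wording secures by requiring $\Phi_f$ to be finite. Once the correct decidability statement is pinned down, the rest is the one-line composition of algorithms above.
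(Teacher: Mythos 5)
Your proposal is correct and matches the paper's own proof exactly: the paper derives the theorem in one line from the undecidability of modal information logic over semilattices (Theorem~\ref{th:MIL}) together with the decidability of truthmaker consequence (citing \cite{FineJ19:rsl, Knudstorp2023:synthese}), via precisely the composition-of-algorithms reduction you describe. Your additional care about the finitary, finite-premise form of truthmaker entailment is a sensible elaboration but introduces nothing beyond the paper's argument.
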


\begin{proof}
    We have proven that modal information logic over semilattices is undecidable (Th. \ref{th:MIL}), and by contrast, truthmaker consequence is not (cf. \cite{FineJ19:rsl,Knudstorp2023:synthese}).
\end{proof}

\section{Proofs of the tiling lemmas}\label{sec:tilinglemmas}
We now define the tiling formula $\phi^{}_{\mathcal{W}}$ for a given set of Wang tiles $\mathcal{W}$ and prove the tiling lemmas, Lemma~\ref{lm:tiling} and~\ref{lm:tilingpowerset}. This forms the core of the paper implying the undecidability results discussed above, but is also of independent interest. Indeed, similar proof methods have been applied by the author in two other contexts: in the setting of relevant logic \cite{Knudstorp2024} (with an extended version in preparation \cite{Knudstorp:relevant}), and in recent unpublished joint work with Nick Galatos, Peter Jipsen, and Revantha Ramanayake demonstrating the undecidability of (intuitionistic) Bunched Implication logic (BI) \cite{GalatosJipsenKnudstorpRevantha:BIund}.

We begin with a few preliminary definitions and observations that will come in handy. As the tiling lemmas will be proved working frame-theoretically, the following remarks are formulated in that setting, though they may be translated into essentially equivalent algebraic terminology.

% \begin{Remark}
%     We will make frequent use of the following recursively defined notation:
%     \begin{itemize}
%         \item If $aRa_1a_2$ and $a_1Ra_3a_4$, then we write $aR(a_3a_4)a_2$
%     \end{itemize}
%         \[
%             \text{For }aR
%         \]

%     associativity of the accessibility relation \( R \), which, in infix notation, is that $xR(a_1a_2)a_3$ iff $xRa_1(a_2a_3)$ (recall Definition \ref{def:associativeframe} and that we use infix `$x{R}yz$' as synonymous with the prefix `$Rxyz$').
%     That is, we may equivalently write $xRa_1a_2a_3$ without brackets. More generally, due to associativity, for all $n\geq 3$ we may simply write
% \[
% a \, R \, a_1 \cdots a_n
% \]
% the nesting of brackets, since all such are interderivable under associativity. 
% \end{Remark}

\begin{Remark}[Associativity and tiling]\label{rm:AssociativityandTiling}
    Let $(X,R)$ be an associative frame. For $z\in X$, define two binary relations
    \[
        {_zR}\mathrel{:=}\{(a,b)\in X^2\mid Razb \}\quad \text{and} \quad R_z\mathrel{:=}\{(a,b)\in X^2\mid Rabz\}
    \]    
    for `composition' on the left and right, respectively. If $(a,b)\in {_zR}$, we draw an arrow from $a$ to $b$: \tikz[baseline={(a.base)}]{
  \node (a) at (0,0) {$a$};
  \node (b) at (1.5,0) {$b$};
  \draw[->] (a) -- node[midway, below] {${_zR}$} (b);
}. Likewise if $(a,b)\in {R_z}$.  Associativity becomes:

\begin{center}
\hspace{-.235cm}
\begin{tikzpicture}[>=stealth, node distance=2cm]
  % Nodes
  \node (a1) at (0,0) {$a$};
  \node (d1) at (0,2) {$d$};
  \node (c1) at (2,2) {$c$};
  \node (implies) at (3.5,1) {$\Rightarrow$};
  
  \node (a2) at (5,0) {$a$};
  \node (b2) at (7,0) {$b$};
  \node (c2) at (7,2) {$c$};
  \node (d2) at (5,2) {$d$};

  % Arrows
  \draw[->] (a1) -- node[midway,left]{${R_y}$} (d1);
  \draw[->] (d1) -- node[midway,above]{${_xR}$} (c1);

  \draw[->, dotted] (a2) -- node[midway,below]{${_xR}$} (b2);
  \draw[->, dotted] (b2) -- node[midway,right]{${R_y}$} (c2);

  \draw[->] (a2) -- node[midway,left]{${R_y}$} (d2);
  \draw[->] (d2) -- node[midway,above]{${_xR}$} (c2);
\end{tikzpicture}

\vspace{0.5em}

\small
From $(Rady \text{ and } Rdxc)$ infer $\exists b \in X \, (Raxb \text{ and } Rbcy)$.
\end{center}

and, vice versa,
\begin{center}
\hspace{.235cm}
\begin{tikzpicture}[>=stealth, node distance=2cm]
  % Nodes
  \node (a1) at (0,0) {$a$};
  \node (b1) at (2,0) {$b$};
  \node (c1) at (2,2) {$c$};
  \node (implies) at (3.5,1) {$\Rightarrow$};
  
  \node (a2) at (5,0) {$a$};
  \node (b2) at (7,0) {$b$};
  \node (c2) at (7,2) {$c$};
  \node (d2) at (5,2) {$d$};

  % Arrows
  \draw[->] (a1) -- node[midway,below]{${_xR}$} (b1);
  \draw[->] (b1) -- node[midway,right]{${R_y}$} (c1);

  \draw[->] (a2) -- node[midway,below]{${_xR}$} (b2);
  \draw[->] (b2) -- node[midway,right]{${R_y}$} (c2);

  \draw[->, dotted] (a2) -- node[midway,left]{${R_y}$} (d2);
  \draw[->, dotted] (d2) -- node[midway,above]{${_xR}$} (c2);
\end{tikzpicture}

\vspace{0.5em}

\small
From $(Raxb \text{ and } Rbcy)$ infer $\exists d \in X \, (Rady \text{ and } Rdxc)$.
\end{center}
As the diagrams illustrate, to encode the quadrant $\mathbb{N}^2$, it suffices to construct an infinite staircase by alternately taking steps along the first axis (via ${_xR}$) and the second axis (via ${R_y}$). The remaining points in the quadrant $\mathbb{N}^2$ are then generated by applying associativity. This idea forms a key component in our definition of the tiling formulas $\phi^{}_\mathcal{W}$ as well as in our proof of Lemma~\ref{lm:tiling}.
\end{Remark}

\begin{Remark}\label{rm:pastlooking}
    We abbreviate 
        \[
            \varphi\hookrightarrow\psi\mathrel{:=}\neg(\varphi\circ\neg\psi),
        \]
    and
        \[
        \psi\hookleftarrow\varphi\mathrel{:=}\neg(\neg\psi\circ\varphi).
        \]
    %`$\neg({{\circ}\neg})$' by `$\hookrightarrow$' and `$\neg({\neg {\circ}})$' by `$\hookleftarrow$'. 
    It is then readily verified that, given a model $\mathbb{M}=(X,R,V)$, the semantics of `$\hookrightarrow$' and `$\hookleftarrow$' come down to the following:
    \begin{align*}
        &\mathbb{M}, x\Vdash \varphi\hookrightarrow\psi && \text{iff} && \text{for all $y,z\in X$ such that } Rxyz: &&\text{if } \mathbb{M}, y\Vdash \varphi, \\
        & && && &&\text{then }\mathbb{M}, z\Vdash \psi.
    \end{align*}
    \begin{align*}
        &\mathbb{M}, x\Vdash \psi\hookleftarrow\varphi && \text{iff} && \text{for all $y,z\in X$ such that } Rxyz: &&\text{if } \mathbb{M}, z\Vdash \varphi, \\
        & && && &&\text{then }\mathbb{M}, y \Vdash \psi.
    \end{align*}
    Additionally, we define a unary operator `$\Box$' as
    \begin{align*}
        \Box\varphi\mathrel{:=} (\top\hookrightarrow  \varphi)\land (\varphi\hookleftarrow\top)\land ([\top\hookrightarrow\varphi]\hookleftarrow\top),
    \end{align*}    
    and write $Sxy$, or $xS y$, if there are $a,b$ such that
    \begin{itemize}
        \item $Rxay$; or
        \item $Rxya$; or
        \item $Rx(ay)b$.\footnote{Recall Definition~\ref{def:associativeframe}: $Rx(ay)b$ is short for $\exists z\in X (Rxzb \text{ and } Rzay)$.
        %Notation is well-defined by associativity of $R$. More generally, by associativity, we can simply write $(x_1,\hdots, x_n)Rx$ for a tuple $(x_1,\hdots, x_n)$, as the order and multiplicity matters (contra [multi]sets) but the parentheses don't.
        }
    \end{itemize} 
    Observe that 
    \begin{align*}
        &\mathbb{M}, x\Vdash \Box\varphi && \text{iff} && \text{for all $y\in X$ such that } xS y: \mathbb{M}, y\Vdash \varphi,
    \end{align*}
    and note that, due to associativity, the relation $S$ is \textit{transitive}: if $xS yS z$, then $xS z$.%\footnote{This seems important, because this is what makes our defined modality $\Box$ somewhat `global'.}
    
    For intuition, consider the case where $(X,R)$ is a semilattice; i.e., $Rxyz$ iff $x=y\cdot z$ for $\cdot:X\times X\to X$ a function that is
    \begin{itemize}\itemsep.2cm
        \item commutative: $x\cdot y=y\cdot x$,
        \item associative: $(x\cdot y)\cdot z=x\cdot(y\cdot z)$,
        \item idempotent: $x\cdot x=x$.
    \end{itemize}
    Then $S\subseteq X^2$ is simply the partial order $\geq$ on $X$ defined by the semilattice operation,
    \begin{align*}
        &x\geq y && \text{:iff} && x=x\cdot y,\footnotemark
    \intertext{and $\Box$ the `past-looking' boxed modality}
        &\mathbb{M}, x\Vdash \Box\varphi&&  \text{iff}&& \text{for all $y\leq x:$ $\mathbb{M}, y\Vdash \varphi$.}
    \end{align*}
\end{Remark}\footnotetext{Here, we have construed `$\cdot$' as a join-semilattice, though it could just as well have been treated as a meet-semilattice. We opted for the former to align with our later focus on $(\mathcal{P}(\mathbb{N}), \cup)$, which, perhaps, is more immediately viewed as a join-semilattice.}    
With this, we now define the tiling formulas.
\begin{Definition}[Tiling formulas]   
    Let $\mathcal{W}$ be a finite set of tiles. Given a tile $t\in\mathcal{W}$, denote the sets of tiles that match to the right and upward, respectively, by
    \[
        R(t)=\{t'\in \mathcal{W}\mid t_r=t'_l\}\quad \text{and} \quad U(t)=\{t'\in \mathcal{W}\mid t_u=t'_d\}.\footnotemark
    \]\footnotetext{Recall, cf. Definition~\ref{def:WangTiles}, that for a tile $t$, its edge colours are denoted by \( t_u \) (up), \( t_d \) (down), \( t_l \) (left), and \( t_r \) (right).}
For each tile $t\in \mathcal{W}$, we introduce a propositional letter $\mathtt{t}$ and, by an abuse of notation, identify the tile $t\in \mathcal{W}$ with the conjunction of literals
    \begin{align}
        t\mathrel{:=}\mathtt{t}\land \bigwedge_{t\neq t'\in \mathcal{W}}\neg \mathtt{t'}. \label{eq:tile}
    \end{align}
    By a further abuse, we define formulas corresponding to the sets of tiles that match to the right and upward, respectively, by
    \begin{align}
            \mathtt{R}(t)\mathrel{:=}\bigvee_{t'\in R(t)}t' \quad \text{and} \quad \mathtt{U}(t)\mathrel{:=}\bigvee_{t'\in U(t)}t'.\label{eq:rightandup}
    \end{align}
    Additionally, we introduce propositional letters to encode the quadrant $\mathbb{N}^2$ as well as properties of the coordinate components in pairs $(m,n) \in \mathbb{N}^2$:
    \begin{itemize}
        \item $\mathtt{x_e}$, to encode that the first component $m$ is even, i.e., $m=2m'$.
        \item $\mathtt{x_o}$, to encode that the first component $m$ is odd, i.e., $m=2m'+1$.
        \item $\mathtt{y_e}$, to encode that the second component $n$ is even, i.e., $n=2n'$.
        \item $\mathtt{y_o}$, to encode that the second component $n$ is odd, i.e., $n=2n'+1$.
        \item $\mathtt{x'}$, used in encoding that the first component $m$ is the successor of $m-1$. 
        \item $\mathtt{y'}$, used in encoding that the second component $n$ is the successor of $n-1$. 
    \end{itemize}
Using these propositional letters, we define the formulas:
    \begin{align*}
        %& (\alpha)&&x_e\circ y_e\\
        %& (\alpha)&&\big[x_e\land \neg x_o\land \neg x'\land H(\neg y_e\land\neg y_o\land \neg y')\big]\circ\big[y_e\land \neg y_o\land \neg y'\land H (\neg x_e\land \neg x_o\land \neg x')\big]\\
        &(\alpha_1) && \Box\left(\mathtt{x_e}\to [\mathtt{x'}\circ \mathtt{x_o}]\right)\\
        %&(\beta_1) && H.x_e\to [x_o\land \neg x_e\land \neg x']\circ [x'\land \neg x_e\land \neg x_o]\\
        &(\alpha_2) && \Box\left(\mathtt{x_o}\to [\mathtt{x'}\circ \mathtt{x_e}]\right)\\
        %&(\beta_2) && H.x_o\to  [x_e\land \neg x_o\land \neg x']\circ [x'\land \neg x_e\land \neg x_o]\\
        &(\alpha_3) && \Box\left(\mathtt{y_e}\to [\mathtt{y_o}\circ \mathtt{y'}]\right)\\
        %&(\beta_3) && H.y_e\to [y_o\land \neg y_e\land \neg y']\circ [y'\land \neg y_e\land \neg y_o]\\
        &(\alpha_4) && \Box\left(\mathtt{y_o}\to [\mathtt{y_e}\circ \mathtt{y'}]\right)\\
        %&(\beta_4) && H.y_o\to [y_e\land \neg y_o\land \neg y']\circ [y'\land \neg y_e\land \neg y_o]\\
        &(\beta_1) && \Box\left( \Biggl[\bigvee_{\mathtt{a},\mathtt{b}\in \{\mathtt{e},\mathtt{o}\}} \mathtt{x_a} \circ \mathtt{y_b}\Biggr]\to \bigvee_{t\in \mathcal{W}}t\right)\\
        &(\beta_2) && \Box\bigwedge_{\mathtt{a},\mathtt{b}\in \{\mathtt{e},\mathtt{o}\}}\Biggl( \mathtt{x_a} \circ \mathtt{y_b}\to \bigwedge_{(\mathtt{a'},\mathtt{b'})\neq (\mathtt{a},\mathtt{b})}\neg ( \mathtt{x_{a'}}\circ \mathtt{y_{b'}})\Biggr)\\
        &(\gamma_1^h) && \Box\bigwedge_{t\in \mathcal{W}}\biggl(\Bigl[\mathtt{x_e}\circ \mathtt{y_e}\land t\Bigr] \to\Bigl[ \mathtt{x'} \hookrightarrow \bigl(\mathtt{x_e}\circ \mathtt{y_e}\lor[\mathtt{x_o}\circ \mathtt{y_e}\land \mathtt{R}(t)]\bigr)\Bigr]\biggr)\\
        &(\gamma_1^v) && \Box\bigwedge_{t\in \mathcal{W}}\biggl(\Bigl[\mathtt{x_e}\circ \mathtt{y_e}\land t\Bigr] \to\Bigl[ \bigl(\mathtt{x_e}\circ \mathtt{y_e}\lor[\mathtt{x_e}\circ \mathtt{y_o}\land \mathtt{U}(t)]\bigr) \hookleftarrow \mathtt{y'}  \Bigr]\biggr)\\
        &(\gamma_2^h) && \Box\bigwedge_{t\in \mathcal{W}}\biggl(\Bigl[\mathtt{x_e}\circ \mathtt{y_o}\land t\Bigr] \to\Bigl[ \mathtt{x'} \hookrightarrow \bigl(\mathtt{x_e}\circ \mathtt{y_o}\lor[\mathtt{x_o}\circ \mathtt{y_o}\land \mathtt{R}(t)]\bigr)\Bigr]\biggr)\\
        &(\gamma_2^v) && \Box\bigwedge_{t\in \mathcal{W}}\biggl(\Bigl[\mathtt{x_e}\circ \mathtt{y_o}\land t\Bigr] \to\Bigl[ \bigl(\mathtt{x_e}\circ \mathtt{y_o}\lor[\mathtt{x_e}\circ \mathtt{y_e}\land \mathtt{U}(t)]\bigr) \hookleftarrow \mathtt{y'}  \Bigr]\biggr)\\
        &(\gamma_3^h) && \Box\bigwedge_{t\in \mathcal{W}}\biggl(\Bigl[\mathtt{x_o}\circ \mathtt{y_e}\land t\Bigr] \to\Bigl[ \mathtt{x'} \hookrightarrow \bigl(\mathtt{x_o}\circ \mathtt{y_e}\lor[\mathtt{x_e}\circ \mathtt{y_e}\land \mathtt{R}(t)]\bigr)\Bigr]\biggr)\\
        &(\gamma_3^v) && \Box\bigwedge_{t\in \mathcal{W}}\biggl(\Bigl[\mathtt{x_o}\circ \mathtt{y_e}\land t\Bigr] \to\Bigl[ \bigl(\mathtt{x_o}\circ \mathtt{y_e}\lor[\mathtt{x_o}\circ \mathtt{y_o}\land \mathtt{U}(t)]\bigr) \hookleftarrow \mathtt{y'}  \Bigr]\biggr)\\
        &(\gamma_4^h) && \Box\bigwedge_{t\in \mathcal{W}}\biggl(\Bigl[\mathtt{x_o}\circ \mathtt{y_o}\land t\Bigr] \to\Bigl[ \mathtt{x'} \hookrightarrow \bigl(\mathtt{x_o}\circ \mathtt{y_o}\lor[\mathtt{x_e}\circ \mathtt{y_o}\land \mathtt{R}(t)]\bigr)\Bigr]\biggr)\\
        &(\gamma_4^v) && \Box\bigwedge_{t\in \mathcal{W}}\biggl(\Bigl[\mathtt{x_o}\circ \mathtt{y_o}\land t\Bigr] \to\Bigl[ \bigl(\mathtt{x_o}\circ \mathtt{y_o}\lor[\mathtt{x_o}\circ \mathtt{y_e}\land \mathtt{U}(t)]\bigr) \hookleftarrow \mathtt{y'}  \Bigr]\biggr)
    \end{align*}
    Finally, we let
    \[
        \phi^{}_\mathcal{W} \mathrel{:=} \neg\left(\mathtt{x_e}\circ \mathtt{y_e}\land \bigwedge\alpha_i\land \bigwedge\beta_i\land \bigwedge(\gamma_i^h\land \gamma_i^v)\right).
    \]
\end{Definition}
Clearly, the construction of the tiling formula $\phi^{}_\mathcal{W}$ from a finite set of tiles $\mathcal{W}$ is computable. It therefore only remains to prove the tiling lemmas, Lemma~\ref{lm:tiling} and~\ref{lm:tilingpowerset}, which we now proceed to do. 

\begin{proof}[Proof of Lemma~\ref{lm:tiling}]
    By contraposition, suppose that $\phi^{}_\mathcal{W}$ is not valid in the least associative normal modal logic, i.e., $\phi^{}_\mathcal{W}\notin\mathbf{AK}_2$ (or algebraically, $\mathsf{BSg}\nvDash \phi^{}_\mathcal{W}\approx \top$). We are then to show that $\mathcal{W}$ tiles $\mathbb{N}^2$.

    By canonicity it follows that there is some associative model $\mathbb{M}=(X,R,V)$ and point $z\in X$ such that
        \[
            \mathbb{M}, z\nVdash \phi^{}_\mathcal{W}, 
        \]
    i.e., 
        \[
            \mathbb{M}, z\Vdash \mathtt{x_e}\circ \mathtt{y_e}\land \bigwedge\alpha_i\land \bigwedge\beta_i\land \bigwedge(\gamma_i^h\land \gamma_i^v).
        \]
Using this, we prove that $\mathcal{W}$ tiles $\mathbb{N}^2$. Before proceeding, we outline the structure of the proof. 

\paragraph{\textit{Outline.}} We will begin by identifying elements of $\mathbb{M}$
\[ x_0,x_{1}, x_{2}, \hdots;\quad y_0,y_{1}, y_{2}, \hdots;\quad x_0',x_{1}', x_{2}', \hdots; \quad y_0',y_{1}', y_{2}', \hdots,
\]
which we, in turn, will use to identify what we will call \textit{staircase points}:
\[ p_{1,1}, p_{2,1}, p_{2,2}, p_{3,2}\hdots, p_{k,k}, p_{k+1,k}, \hdots
\]
These are so named because they form a staircase in that they will satisfy:\footnote{Recall that the infix `$x{R}yz$' is synonymous with the prefix `$Rxyz$'.}
\[
    p_{k,k}\,{R}\, x_{k}\, y_{k} \quad \text{and}\quad  p_{k+1,k}\,{R} \,x_{k+1} \,y_{k},
\]
along with
\[
    p_{k,k}\,{R} \,x_{k+1}' \,p_{k+1,k} \quad \text{and} \quad p_{k+1,k}\,{R} \,p_{k+1,k+1} \,y_{k+1}'.
\]
From the staircase, we find a full grid of elements $p_{m,n} \in \mathbb{M}$ for all $(m,n) \in \mathbb{N}\times \mathbb{N}$. Lastly, by associating each point in the quadrant $(m,n) \in \mathbb{N}\times \mathbb{N}$ with the corresponding grid point $p_{m,n} \in \mathbb{M}$, we can define the tiling $\tau:\mathbb{N}\times \mathbb{N}\to \mathcal{W}$ according to which tile formula $t \in \mathcal{W}$ is satisfied at $p_{m,n}$, i.e., $\mathbb{M},  p_{m,n} \Vdash t$. 
\\\\
Returning to the proof, we begin by locating the points $(x_i)_{i\geq 0},(y_i)_{i\geq 0}$ within $\mathbb{M}$---which, loosely speaking, may be thought of as the first and second axis---along with auxiliary points $(x_i')_{i> 0}, (y_i')_{i>0}$ encoding how $x_{i+1}$ and $y_{i+1}$ are successors of $x_{i}$ and $y_{i}$, respectively. Specifically, we use that $z \Vdash  \mathtt{x_e}\circ \mathtt{y_e}\land \bigwedge \alpha_i$ and show by induction that for all $i\geq 0$: 
    \begin{enumerate}
        \item $z\,{R}\,x_0\,y_0$,
        \item $x_i\,{R}\,x_{i+1}'\, x_{i+1}$,
        \item $y_i\,{R} \,y_{i+1} \,y_{i+1}'$, 
        \item\label{itm:primes} $x_{i+1}'\Vdash \mathtt{x'}$ and $y_{i+1}'\Vdash \mathtt{y'}$,
        \item $zSx_i, zSy_i, zSx_{i+1}', zSy_{i+1}'$,
        \item\label{itm:even} If $i$ is even: $x_i\Vdash \mathtt{x_e}$ and $y_i\Vdash \mathtt{y_e}$,
        \item\label{itm:odd} If $i$ is odd: $x_i\Vdash \mathtt{x_o}$ and $y_i\Vdash \mathtt{y_o}$.
    \end{enumerate}
For the base case, since $z\Vdash \mathtt{x_e}\circ \mathtt{y_e}$, there exist points $x_0,y_0\in \mathbb{M}$ such that  $zRx_0y_0$, $x_0\Vdash \mathtt{x_e}$, and $y_0\Vdash \mathtt{y_e}$.

For the induction step, suppose the induction hypothesis holds for some $n\geq 0$. We treat the case of $n$ even; the case where $n$ is odd is analogous. By induction hypothesis, we have $x_n\Vdash \mathtt{x_e}$, $y_n\Vdash \mathtt{y_e}$, $zSx_n,$ and $zSy_n$. Hence, since $z\Vdash \alpha_1\land \alpha_3$, there exist points $x_{n+1}', x_{n+1}, y_{n+1}, y_{n+1}'\in \mathbb{M}$ such that 
\[
    x_n{R} x_{n+1}' x_{n+1}\quad \text{and}\quad y_n{R} y_{n+1} y_{n+1}'
\]
 as well as
\[
    x_{n+1}'\Vdash \mathtt{x'}, \qquad x_{n+1}\Vdash \mathtt{x_o}, \qquad y_{n+1}\Vdash \mathtt{y_o}, \qquad y_{n+1}'\Vdash \mathtt{y'}.
\]
Thus, items 1-4 and 6-7 are met. For item 5, simply recall that $S$ is transitive in the presence of associativity (cf. Remark~\ref{rm:pastlooking}), thereby completing the induction.
\\\\
Next, also by an induction, we show that there are staircase points 
\[
    p_{1,1}, p_{2, 1}, \hdots, p_{k,k}, p_{k+1,k}, \hdots
\]
such that 
\begin{align*}
    &p_{1,1}\,{R} \,x_{2}' \,p_{2,1}, && p_{2,1}\,{R} \, p_{2,2} \,y_{2}', && p_{2,2}\,{R} \,x_{3}' \,p_{3,2}, && p_{3,2}\,{R} \, p_{3,3} \,y_{3}',\hdots \\
    &p_{k,k}\,{R} \,x_{k+1}' \,p_{k+1,k}, && p_{k+1,k}\,{R} \, p_{k+1,k+1} \,y_{k+1}', &&\hdots,
\end{align*}
while also demonstrating that for all $k>1$,
\begin{align}
    &p_{k,k}\,{R} \, x_{k} \, y_{k} && \text{and} && p_{k+1,k}\,{R} \,x_{k+1}\, y_{k}, \label{eq:stair1} \\
    &z\,S\,p_{k,k} && \text{and} && z\,S\,p_{k+1,k}. \label{eq:stair2}
\end{align}
For the base case, using that $zRx_0y_0$, $x_0Rx_{1}'x_{1}$, and $y_0Ry_{1} y_{1}'$, it follows by associativity that there is a point $p_{1,1}$ such that
\[
    z\,{R}\, (x_{1}' \,p_{1,1})\,y_{1}' \quad \text{ so }\quad z\,S\,p_{1,1}, \quad \text{and}\quad p_{1,1}\,{R}\,x_{1}\,y_{1}.
\]
Further, $x_1$ witnesses $p_{1,1}R(x_{2}' x_{2})y_1$ because $x_{1}R x_{2}'x_{2}$, hence, by associativity, 
\[
    p_{1,1}\,R\,x_{2}' \,(x_{2}\,y_1),
\]
which means that there is a point $p_{2,1}$ such that
    \[
        p_{1,1}\,R\, x_2' \, p_{2,1} \quad \text{and}\quad p_{2,1}\,R\, x_2 \,y_1,
    \]
As we have already shown that $z\,S\,p_{1,1}$ and the former implies that $p_{1,1}\,S\,p_{2,1}$, transitivity of $S$ gives us $z\,S\,p_{2,1}$, completing the base case. 

The inductive step has two cases depending on whether we assume the claim up to $p_{k,k}\,{R} \,x_{k+1}' \,p_{k+1,k}$ or up to $p_{k+1,k}\,{R} \, p_{k+1,k+1} \,y_{k+1}'$. We treat the former. That is, we must show that there is some $p_{k+1,k+1}$ such that 
\[
    p_{k+1,k}\,{R} \, p_{k+1,k+1} \,y_{k+1}',\quad p_{k+1,k+1}\,{R} \, x_{k+1} \, y_{k+1}, \quad \text{and}\quad  z\,S\,p_{k+1,k+1}. 
\]
By assumption, we have $p_{k+1,k}\, R\,x_{k+1}\,y_k$, so $y_k$ witnesses $p_{k+1,k}\,R\,x_{k+1} (y_{k+1}y_{k+1}')$ because $y_{k}R y_{k+1}y_{k+1}'$, so by associativity,
\[
    p_{k+1,k}\,R\,(x_{k+1}\, y_{k+1})\,y_{k+1}'.
\]
This means that there is a point $p_{k+1,k+1}$ such that
    \[
        p_{k+1,k}\,R\, p_{k+1,k+1} \, y_{k+1}' \quad \text{and}\quad p_{k+1,k+1}\,R\, x_{k+1}\, y_{k+1},
    \]
which---since $z\,S\,p_{k+1,k}\,S\,p_{k+1,k+1}$ implies $z\,S\,p_{k+1,k+1}$---completes the induction and establishes the staircase. 
\\\\
To generate the full grid, we need points $p_{m,n}\in \mathbb{M}$ for all $m,n\geq 1$ such that 
\begin{align}
    z\, S\, p_{m,n}, \quad p_{m,n}\, R\, x_{m+1}'\, p_{m+1, n}, \quad p_{m,n}\, R\, p_{m, n+1}\, y_{n+1}'.\label{eq:gridproperties}
\end{align}
We already have these points along the staircase, hence, by repeatedly applying associativity as noted in Remark~\ref{rm:AssociativityandTiling}, we get the remaining points as illustrated in Figure~\ref{fig:staircase}.

\begin{figure}[ht]

\begin{center}
\begin{tikzpicture}[>=stealth, node distance=2cm]
  % Nodes
  
  \node (a2) at (6,0) {$p_{1,1}$};
  \node (b2) at (8,0) {$p_{2,1}$};
  \node (c2) at (8,2) {$p_{2,2}$};
  \node (b3) at (10,2) {$p_{3,2}$};     \node at (11,2) {$\cdots $};
  \node (c3) at (10,4) {$p_{3,3}$};     \node at (11,4) {$\cdots $};        \node at (10,5) {$\vdots $};
  
  \node (d2) at (6,2) {$p_{1,2}$};
  \node (d3) at (8,4) {$p_{2,3}$};      \node at (8,5) {$\vdots $};
  \node (d4) at (6,4) {$p_{1,3}$};      \node at (6,5) {$\vdots $};
  \node (e2) at (10,0) {$p_{3,1}$};     \node at (11,0) {$\cdots $};

  % Arrows

  \draw[->] (a2) -- node[midway,below]{\footnotesize ${_{x_2'}R}$} (b2);
  \draw[->] (b2) -- node[midway,right]{\footnotesize ${R_{y_2'}}$} (c2);
  \draw[->] (c2) -- node[midway,below]{\footnotesize ${_{x_3'}R}$} (b3);
  \draw[->] (b3) -- node[midway,right]{\footnotesize ${R_{y_3'}}$} (c3);

  \draw[->, dotted] (a2) -- node[midway,left]{ \footnotesize $R_{y_2'}$ } (d2);
  \draw[->, dotted] (d2) -- node[midway,below]{\footnotesize ${_{x_2'}R}$} (c2);
    \draw[->, dotted] (d3) -- node[midway,above]{\footnotesize ${_{x_3'}R}$} (c3);
    \draw[->, dotted] (d4) -- node[midway,above]{\footnotesize ${_{x_2'}R}$} (d3);
    \draw[->, dotted] (b2) -- node[midway,below]{\footnotesize ${_{x_3'}R}$} (e2);
    \draw[->, dotted] (d2) -- node[midway,left]{ \footnotesize $R_{y_3'}$ } (d4);
    \draw[->, dotted] (c2) -- node[midway,right]{ \footnotesize $R_{y_3'}$ } (d3);
    \draw[->, dotted] (e2) -- node[midway,right]{ \footnotesize $R_{y_2'}$ } (b3);
\end{tikzpicture}
\caption{Generating $\mathbb{N}^2$ from the staircase: $p_{1,1}, p_{2,1}, p_{2,2}, p_{3,2}, p_{3,3},\hdots$.}
  \label{fig:staircase}
\end{center}
\end{figure}
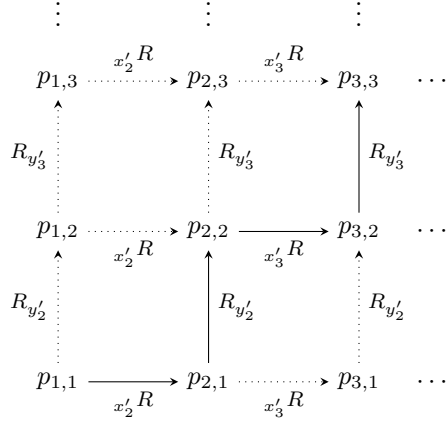

That is, for the points $p_{m,n}$ thus obtained, we have  
\[
     p_{m,n}\, R\, x_{m+1}'\, p_{m+1, n} \quad \text{and}\quad p_{m,n}\, R\, p_{m, n+1}\, y_{n+1}'.
\]
Furthermore, $zSp_{m,n}$ holds by transitivity of $S$, since $zSp_{1,1}$ and, for all $p_{m,n}$, we clearly have 
\[
    p_{m,n}\, S\, p_{m+1,n} \quad \text{and} \quad p_{m,n}\, S\, p_{m,n+1}.
\]

As a consequence of $zSp_{m,n}$ along with $z\Vdash \beta_1$ and the identification in~\eqref{eq:tile} of $t\in\mathcal{W}$ with the conjunction of literals, we have that $\tau:\mathbb{N}^2\to \mathcal{W}$ defined by
    \[
        (m,n)\mapsto  t\in \mathcal{W} \text{ such that }p_{m, n} \Vdash t
    \]
is well-defined as a function, provided that all $p_{m,n}\Vdash \bigvee_{\mathtt{a},\mathtt{b}\in \{\mathtt{e},\mathtt{o}\}} \mathtt{x_a} \circ \mathtt{y_b}$. In fact, to show that $\tau$ defines a tiling, we will need something stronger than this proviso, namely that for all $p_{m,n}$,
 \begin{align*}
     &\text{If $m$ and $n$ are even: } && p_{m,n}\Vdash \mathtt{x_e}\circ \mathtt{y_e}\land \bigwedge_{(\mathtt{a'},\mathtt{b'})\neq (\mathtt{e},\mathtt{e})}\neg ( \mathtt{x_{a'}}\circ \mathtt{y_{b'}}),\\
    &\text{If $m$ is even and $n$ is odd: } && p_{m,n} \Vdash \mathtt{x_e}\circ \mathtt{y_o}\land \bigwedge_{(\mathtt{a'},\mathtt{b'})\neq (\mathtt{e},\mathtt{o})}\neg ( \mathtt{x_{a'}}\circ \mathtt{y_{b'}}),\\
    &\text{If $m$ is odd and $n$ is even: } && p_{m,n} \Vdash \mathtt{x_o}\circ \mathtt{y_e}\land \bigwedge_{(\mathtt{a'},\mathtt{b'})\neq (\mathtt{o},\mathtt{e})}\neg ( \mathtt{x_{a'}}\circ \mathtt{y_{b'}}),\\
    &\text{If $m$ and $n$ are odd: } && p_{m,n} \Vdash \mathtt{x_o}\circ \mathtt{y_o}\land \bigwedge_{(\mathtt{a'},\mathtt{b'})\neq (\mathtt{o},\mathtt{o})}\neg ( \mathtt{x_{a'}}\circ \mathtt{y_{b'}}).
 \end{align*}
We show this by an induction, using the staircase as our base case. For the staircase points, this follows immediately: \eqref{eq:stair1}, together with item~\ref{itm:even} and~\ref{itm:odd}, imply the first conjunct; the second then follows from \eqref{eq:stair2} and that $z\Vdash \beta_2$.

For the inductive step, we show that if the claim holds for both $p_{m',n'}$ and $p_{m'+1,n'+1}$, then it also holds for $p_{m',n'+1}$ and $p_{m'+1,n'}$. As noted, this will, in particular, entail that $\tau$ is well-defined as a function. To complete the proof, as part of the same argument, we also show that $\tau$ satisfies the tiling conditions: 
\[
    \tau(m',n')_r = \tau(m'+1,n')_l \quad \text{and} \quad \tau(m',n')_u = \tau(m',n'+1)_d.
\]
By our definition of $\tau$ and the formulas $t, \mathtt{R}(t),$ and $\mathtt{U}(t)$ (cf.~\eqref{eq:tile} and~\eqref{eq:rightandup}), this means showing that
\[
    \text{if }p_{m',n'}\Vdash t \text{ then } p_{m'+1,n'}\Vdash \mathtt{R}(t) \quad \text{and} \quad 
    \text{if }p_{m',n'}\Vdash t \text{ then } p_{m',n'+1}\Vdash \mathtt{U}(t).
\]
There are four cases to consider, corresponding to the parities of $m'$ and $n'$. Since the cases are analogous, we only cover the case where $m'$ and $n'$ are even and show that the claim follows for $p_{m',n'+1}$. Assume, then, that the claim holds for $m'=2m$ and $n'=2n$; that is,
\[
    p_{2m,2n}\Vdash \mathtt{x_e}\circ \mathtt{y_e}\land \bigwedge_{(\mathtt{a'},\mathtt{b'})\neq (\mathtt{e},\mathtt{e})}\neg ( \mathtt{x_{a'}}\circ \mathtt{y_{b'}})
\]
and
\[ 
    p_{2m+1,2n+1}\Vdash \mathtt{x_o}\circ \mathtt{y_o}\land \bigwedge_{(\mathtt{a'},\mathtt{b'})\neq (\mathtt{o},\mathtt{o})}\neg ( \mathtt{x_{a'}}\circ \mathtt{y_{b'}}).
\]
Then, as mentioned, $zSp_{2m,2n}$, $p_{2m,2n}\Vdash \mathtt{x_e}\circ \mathtt{y_e}$, and $z\Vdash \beta_1$, jointly imply that $p_{2m,2n}\Vdash t$ for some $t\in \mathcal{W}$. So to complete the proof of this tiling lemma, we show that not only
\[
    p_{2m,2n+1} \Vdash \mathtt{x_e}\circ \mathtt{y_o}\land \bigwedge_{(\mathtt{a'},\mathtt{b'})\neq (\mathtt{e},\mathtt{o})}\neg ( \mathtt{x_{a'}}\circ \mathtt{y_{b'}})
\]
but also
\[
    p_{2m,2n+1} \Vdash\mathtt{U}(t).
\]
Since we have (a) $z\Vdash \gamma_1^v$, (b) $zSp_{2m,2n}$ , (c) $p_{2m,2n}\Vdash \mathtt{x_e}\circ \mathtt{y_e}\land t$, (d) $p_{2m,2n}\,R \,p_{2m, 2n+1}\,y_{2n+1}'$ (by \eqref{eq:gridproperties}), and (e) $y_{2n+1}'\Vdash \mathtt{y'}$ (by item~\ref{itm:primes}), we get that
\[
    p_{2m, 2n+1}\Vdash \mathtt{x_e}\circ \mathtt{y_e}\lor[\mathtt{x_e}\circ \mathtt{y_o}\land \mathtt{U}(t)].
\]
It follows that $p_{2m, 2n+1}\Vdash t'$ for some $t'\in \mathcal{W}$. Hence, if we had $p_{2m, 2n+1}\Vdash \mathtt{x_e}\circ \mathtt{y_e}$, then (a') $z\Vdash \gamma_1^h$, (b') $zSp_{2m,2n+1}$, (c') $p_{2m,2n+1}\Vdash \mathtt{x_e}\circ \mathtt{y_e}\land t'$, (d') $p_{2m,2n+1}\,R \,x_{2m+1}'\,p_{2m+1, 2n+1}$, and (e') $x_{2m+1}'\Vdash \mathtt{x'}$, would jointly imply that
\[
    p_{2m+1, 2n+1}\Vdash \mathtt{x_e}\circ \mathtt{y_e}\lor \mathtt{x_o}\circ \mathtt{y_e},
\]
which is a contradiction, as we have by assumption that
\[
    p_{2m+1,2n+1}\Vdash \mathtt{x_o}\circ \mathtt{y_o}\land \bigwedge_{(\mathtt{a'},\mathtt{b'})\neq (\mathtt{o},\mathtt{o})}\neg ( \mathtt{x_{a'}}\circ \mathtt{y_{b'}}).
\]
Thus, $p_{2m, 2n+1}\nVdash \mathtt{x_e}\circ \mathtt{y_e}$, so $p_{2m, 2n+1}\Vdash \mathtt{x_e}\circ \mathtt{y_o}\land \mathtt{U}(t)$, whence the remaining 
\[
    p_{2m, 2n+1} \Vdash \bigwedge_{(\mathtt{a'},\mathtt{b'})\neq (\mathtt{e},\mathtt{o})}\neg ( \mathtt{x_{a'}}\circ \mathtt{y_{b'}})
\]
follows from $z\Vdash \beta_2$ and $zSp_{2m, 2n+1}$, finalizing our proof.
\end{proof}

We continue with the proof of the other  tiling lemma.

\begin{proof}[Proof of Lemma~\ref{lm:tilingpowerset}]
    Suppose, contrapositively, that $\tau:\mathbb{N}^2\to \mathcal{W}$ is a tiling. We have to show that $\phi^{}_\mathcal{W}$ is refuted by $(\mathcal{P}(\mathbb{N}), \cup)$, i.e., $\phi^{}_\mathcal{W}\notin \mathrm{Log}(\mathcal{P}(\mathbb{N}),\cup)$ (or algebraically, $(\mathcal{P}(\mathbb{N}), \cup)^+\nvDash \phi^{}_\mathcal{W}\approx \top$).

    To this end, write $2\mathbb{N}=\{2n\mid n\in \mathbb{N}\}$ for the even numbers and $2\mathbb{N}+1=\{2n+1\mid n\in \mathbb{N}\}$ for the odd numbers, and define a valuation $V$ as follows:    
\begin{align*}
        V(\mathtt{x_e})&=\{X\subseteq 2\mathbb{N}\mid |2\mathbb{N}\setminus X| \text{ is even}\},\\
        V(\mathtt{x_o})&=\{X\subseteq 2\mathbb{N}\mid |2\mathbb{N}\setminus X| \text{ is odd}\},\\
        V(\mathtt{y_e})&=\{X\subseteq 2\mathbb{N}+1\mid |(2\mathbb{N}+1)\setminus X| \text{ is even}\},\\
        V(\mathtt{y_o})&=\{X\subseteq 2\mathbb{N}+1\mid |(2\mathbb{N}+1)\setminus X| \text{ is odd}\},\\
        V(\mathtt{x'})&=\{\{n\}\mid n\in 2\mathbb{N}\},\\
        V(\mathtt{y'})&=\{\{n\}\mid n\in (2\mathbb{N}+1)\},\\
        V(\mathtt{t})&=\{X\cup Y\mid X\in (V(\mathtt{x_e})\cup V(\mathtt{x_o})), Y\in (V(\mathtt{y_e})\cup V(\mathtt{y_o})),\\
        &\qquad\qquad\qquad\tau(|2\mathbb{N}\setminus X|,|(2\mathbb{N}+1)\setminus Y|)=t\}.
\end{align*}
We proceed showing that
    \begin{align*}
        (\mathcal{P}(\mathbb{N}), \cup, V), \mathbb{N}\nVdash \phi^{}_\mathcal{W},
    \end{align*}
i.e.,
    \begin{align*}
        (\mathcal{P}(\mathbb{N}), \cup, V), \mathbb{N}\Vdash \mathtt{x_e}\circ \mathtt{y_e}\land \bigwedge\alpha_i\land \bigwedge\beta_i\land \bigwedge(\gamma_i^h\land \gamma_i^v).
    \end{align*}
First off, observe that $2\mathbb{N}\Vdash \mathtt{x_e}$ and $2\mathbb{N}+1\Vdash \mathtt{y_e}$, hence
\[
    \mathbb{N}=2\mathbb{N}\cup (2\mathbb{N}+1)\Vdash \mathtt{x_e}\circ \mathtt{y_e}.
\]

\textbf{Alphas ($\boldsymbol{\alpha_i}$).} Second, to see that $\mathbb{N}$ also satisfies the $\alpha$'s, note that if some $X\Vdash \mathtt{x_e}$, then by definition $X\subseteq 2\mathbb{N}$ such that $|2\mathbb{N}\setminus X|$ is even. In particular, there are even numbers $2n\in X$, and for these, we have $\{2n\}\Vdash \mathtt{x'}$ and $|2\mathbb{N}\setminus(X\setminus\{2n\})|$ is odd, hence
    \begin{align*}
        X=\{2n\}\cup (X\setminus \{2n\}) \Vdash \mathtt{x'}\circ \mathtt{x_o},
    \end{align*}
as required. 
A fortiori, $\mathbb{N}\Vdash \alpha_1$. $\mathbb{N}\Vdash \alpha_2\land \alpha_3\land \alpha_4$ is proven analogously.
%$X$ is the set of even naturals but for some (potentially empty) initial sequence of even numbers  
%\\\\
%\textbf{Beta ($\boldsymbol{\beta}$):} $\mathbb{N}\Vdash\beta$ is an easy consequence of the following observations: (1) the decomposition of a $Z\subseteq \mathbb{N}$ into $X\cup Y=Z$ for $X\in (V(x_o)\cup V(x_e)), Y\in (V(y_o)\cup V(y_e))$ is unique when it exists; and (2) $V(x_e)\cap V(x_o)=V(y_e)\cap V(y_o)=\varnothing$.
% for all $X\in V(x_e), X'\in V(x_o), Y\in V(y_e), Y'\in V(y_o)$:
%     \begin{align*}
%         X\neq X', \qquad Y\neq Y', \qquad X\cap Y=X\cap Y'=X'\cap Y=X'\cap Y'=\varnothing.
%     \end{align*}

\textbf{Betas ($\boldsymbol{\beta_i}$).} Note that the decomposition of a $Z\subseteq \mathbb{N}$ into $X\cup Y=Z$ for $X\in (V(\mathtt{x_e})\cup V(\mathtt{x_o})), Y\in (V(\mathtt{y_e})\cup V(\mathtt{y_o}))$ is unique when it exists. So since $\tau:\mathbb{N}^2\to \mathcal{W}$ in particular is a function, we get that $\mathbb{N}\Vdash\beta_1$; and since $V(\mathtt{x_e}), V(\mathtt{x_o}), V(\mathtt{y_e}), V(\mathtt{y_o})$ are pairwise disjoint, we also have $\mathbb{N}\Vdash \beta_2$.

\textbf{Gammas ($\boldsymbol{\gamma_i}$).} In proving that $\mathbb{N}\Vdash\gamma_1^h\land \gamma_1^v\land \cdots \land \gamma_4^h\land \gamma_4^v$, we only cover the first conjunct, as the others are proven analogously. Accordingly, suppose $Z\Vdash \mathtt{x_e}\circ \mathtt{y_e}\land t$ for some tile $t$, and let $Z'\subseteq \mathbb{N}$ and $n\in 2\mathbb{N}$ be arbitrary such that $Z= \{n\}\cup Z'$. For the purpose of demonstrating that $\mathbb{N}\Vdash\gamma_1^h$, we must show that
    \[
        Z'\Vdash \mathtt{x_e}\circ \mathtt{y_e}\lor[\mathtt{x_o}\circ \mathtt{y_e}\land \mathtt{R}(t)].
    \]
If $n\in Z'$, then $Z'=Z\Vdash \mathtt{x_e}\circ \mathtt{y_e}$, as required. \\
If $n\notin Z'$, we use that $Z\Vdash \mathtt{x_e}\circ \mathtt{y_e}$ implies that $Z=X\cup Y$ for some $X\in V(\mathtt{x_e})$ and $Y\in V(\mathtt{y_e})$. Since $Z\ni n$ is even, it follows that $n\in X$, whence
    \[
        Z'=(X\setminus \{n\})\cup Y\Vdash \mathtt{x_o}\circ \mathtt{y_e}.
    \]
Further, using that $Z\Vdash t$, so $X\cup Y=Z\Vdash \mathtt{t}$, which means that $ \tau(|2\mathbb{N}\setminus X|, |(2\mathbb{N}+1)\setminus Y|)=t$, we get that
\begin{align*}
    \tau(|2\mathbb{N}\setminus (X\setminus\{n\})|, |(2\mathbb{N}+1)\setminus Y|)=\tau(|2\mathbb{N}\setminus X|+1, |(2\mathbb{N}+1)\setminus Y|\in R(t),
\end{align*}
since $\tau$ is a tiling. Consequently, 
\begin{align*}
    Z'=(X\setminus \{n\})\cup Y\Vdash \mathtt{R}(t),
\end{align*}
as desired, completing the proof of $\mathbb{N}\Vdash\gamma_1^h$, and thereby of the lemma.
\end{proof}

\section{Features of the (un)decidable}\label{sec:features}
With the proofs completed, we conclude by turning to a broader aim of this line of research: to better understand the boundary between the solvable and the unsolvable. To this end, we contrast our undecidability results with known decidability results, accentuating features that appear to separate the decidable from the undecidable.

First, the undecidability of the modal logic of semilattices, $\mathrm{Log}(\mathsf{SL})$, (Th. \ref{th:MIL}) contrasts with two established decidability results. As shown in \cite{Knudstorp2023:jpl}, if we expand the class of structures from semilattices to posets (so we do not require existence of all binary suprema), the resulting logic, typically referred to as \textit{modal information logic}, is both decidable and finitely axiomatisable.\footnote{In the literature on modal information logics, `$\circ$' is often denoted by `$\Sup$' (or `$\Inf$'), and its clause is written in Polish notation: $x\Vdash \Sup\varphi\psi$ iff $\exists y,z:y\Vdash \varphi, z\Vdash \psi \text{ and }x=\sup\{y,z\}$.} At first glance, this may appear surprising: why should restricting to semilattices---that is, posets with all binary suprema (or infima)---a seemingly more benign subclass, lead to undecidability? But, as noted by van Benthem \cite{Benthem2024:kripke}, interpreting `$\circ$' via supremum (or infimum) over semilattices rather than posets yields an important distinction: only over the former is the associativity axiom validated (recall Remark~\ref{rm:AssociativityandTiling} on the role of associativity).

Second, by \cite[Corollary 2]{Knudstorp2023:synthese} (building on \cite{Benthem17:manus, Benthem19:jpl}), truthmaker consequence can be identified with the $\{\lor, \circ\}$-fragment of $\mathrm{Log}(\mathsf{SL})$. This is significant because truthmaker consequence, unlike $\mathrm{Log}(\mathsf{SL})$, is decidable \cite{FineJ19:rsl, Knudstorp2023:synthese}. In fact, as observed in \cite[Remark B.3.8]{Knudstorp2022:thesis}, the decidability proof in \cite{Knudstorp2023:synthese} extends to the language that includes conjunction. Thus, although $\mathrm{Log}(\mathsf{SL})$ is undecidable, its $\{\land,\lor, \circ\}$-fragment, i.e., the negation-free fragment, is decidable.

This inspires the following research question: how much, or how strong a, negation is needed for undecidability? One algebraically formulated precisification of this is whether the variety of \textit{Heyting semigroups}---Heyting algebras with an associative operator distributing over finite joins---is decidable. 
%(So whether the intuitionistic analogue of $\mathbf{AK}_2$---given by extending $\mathbf{AK}_2$'s $\{\land,\lor, \circ\}$-fragment with intuitionistic negation (and implication), rather than classical negation---is decidable). 
In unpublished work with Peter Jipsen \cite{JipsenKnudstorp:manus}, this was answered affirmatively. Thus, we find a sharp dividing line: unlike the variety of Boolean semigroups, $\mathsf{BSg}$, the variety of Heyting semigroups is decidable. That is, for undecidability, not even intuitionistic negation suffices; classical negation seems necessary. %That is, for undecidability, not even intuitionistic negation suffices; something stronger is needed. 

Having considered the impact on decidability of altering the \textit{language} or the \textit{frames}, we end with a note on the effect of restricting the \emph{valuations}. As mentioned, \cite{EngstromOlsson2023} conceives hyperboolean modal logic as a unifying framework for propositional team logics and studies it under the name LT. This connection is intriguing because propositional team logics are decidable, yet LT is not (Th. \ref{th:hyp}). The key difference being that valuations in team semantics are principal ideals, while in LT they need not be.

To sharpen this contrast, let $\mathsf{Pow}_\cup$ denote the class of associative frames of the form $(\mathcal{P}(X), \cup)$. By Theorem \ref{th:undecidable}, $\mathrm{Log}(\mathsf{Pow}_\cup)$ is undecidable. However, if we only consider models $(\mathcal{P}(X), \cup, V)$ where $V$ is a principal ideal---in that $V:\mathsf{Prop}\to \{\mathcal{P}(Y)\mid Y\subseteq X\}$---instead of allowing arbitrary valuations $V:\mathsf{Prop}\to \mathcal{P}\mathcal{P}(X)$, we achieve a sound and complete semantics for propositional team logic, which, contra $\mathrm{Log}(\mathsf{Pow}_\cup)$, is decidable.\footnote{Proof of this is included in Appendix~\ref{appendix:teams}.}

\subsection*{Acknowledgements} This research would not have been carried out if not for a visit to Chapman University in Fall 2023, during which I obtained a first version of the undecidability results presented here. I am deeply grateful to Peter Jipsen for valuable discussions, and for his kindness and encouragement throughout the visit and beyond. My thanks also go to Maria Aloni and Nick Bezhanishvili for comments on the manuscript, and to Johan van Benthem, Cliff Bergman, Valentin Goranko, and Larry Moss for their input on earlier stages of the project.

\subsection*{Funding} The work was partially supported by the MOSAIC project (H2020-MSCA-RISE-2020, 101007627), as well as by the Nothing is Logical (NihiL) project (NWO OC 406.21.CTW.023).

\bibliographystyle{asl}
\bibliography{bibliography}

\appendix
\section{Kripke semantics for propositional team logic}\label{appendix:teams} In this appendix, we prove the concluding claim of Section \ref{sec:features}, namely that a sound and complete Kripke semantics for propositional team logic (PTL) is given by powerset frames $(\mathcal{P}(X), \cup)$ with principal valuations $V:\mathsf{Prop}\to \{\mathcal{P}(Y)\mid Y\subseteq X\}$. Recall that this modal view on team semantics is of interest because PTL is decidable, whereas the logic of powerset frames with arbitrary valuations, $\mathrm{Log}(\mathsf{Pow}_\cup)$, is not (\ref{th:undecidable}).

\subsection{Team semantics recap} The language of PTL is given by:\footnote{There are not one but many propositional team logics, differing in their choice of connectives and atoms (consult, e.g., \cite{YangVäänänen2017}). However, they typically admit analogous decidability proofs and relational semantics. We focus on the connectives that match the language $\mathcal{L}=\{\land, \lor, \neg,\circ\}$ considered here. See also \cite{EngstromOlsson2023} for a related algebraic perspective on propositional team logics.}
\[        \varphi\mathrel{::=} p \mid \varphi\land \varphi \mid \varphi \lor \varphi \mid   \varphi \vvee \varphi \mid {\sim}\varphi,
\]
where `$\lor$' is the \textit{split (or tensor or local) disjunction}, `$\vvee$' is the \textit{inquisitive (or global) disjunction}, and `$\sim$' is the \textit{Boolean negation}. 

In team semantics, formulas are not evaluated over single valuations, but over sets of valuations $t\in \mathcal{P}(\{v\mid v:\mathsf{Prop}\to \{0,1\}\})$ (called \textit{teams}):

\begin{align*}
        &t\vDash p && \textbf{iff} && \text{for all } v\in t: v(p)=1,\\
        &t\vDash {\sim}\varphi && \textbf{iff} && t\nvDash \varphi,\\
        &t\vDash \varphi\land\psi && \textbf{iff} && t\vDash \varphi \text{ and } t\vDash \psi,\\
        &t\vDash \varphi\vvee\psi && \textbf{iff} && t\vDash \varphi \text{ or } t\vDash \psi,\\
        &t\vDash \varphi\lor\psi && \textbf{iff} && \text{there exist $t',t''$ such that } t'\vDash \varphi; \\
        & && &&t''\vDash \psi; \text{ and $t=t'\cup t''$.}
    \end{align*}
Decidability follows since (i) formulas $\varphi$ contain only finitely many propositional letters $\mathsf{Prop}(\varphi)$, and (ii) to check whether $t\vDash \varphi$ for all $t\subseteq \{v\mid v:\mathsf{Prop}\to \{0,1\}\}$, it suffices to check whether $t\vDash \varphi$ for all $t\subseteq \{v\mid v:\mathsf{Prop}(\varphi)\to \{0,1\}\}$.

\subsection{From teams to worlds}
For $X=\{v\mid v:\mathsf{Prop}\to \{0,1\}\}$ a set of classical valuations, define the associative frame $(\mathcal{P}(X), \cup)$ with valuation $V$ given by
    \[
        V(p)\mathrel{:=}\{t\in\mathcal{P}(X)\mid \text{for all } v\in t:v(p)=1\}=\mathcal{P}(\{v\in X\mid v(p)=1\}).
    \]
This ensures that
    \[
        (\mathcal{P}(X), \cup, V), t\Vdash p\quad \text{iff}\quad t\in V(p)\quad \text{iff} \quad \text{for all } v\in t:v(p)=1, 
    \]
mirroring the team-semantic atomic clause. Observing that the team-semantic clauses for split disjunction `$\lor$', inquisitive disjunction `$\vvee$', and Boolean negation `$\sim$' precisely are the standard Kripke-semantic clauses for a binary diamond `$\circ$', disjunction `$\lor$', and negation `$\neg$', respectively, it follows that for all $t\in \mathcal{P}(X)$ and $\varphi$,
    \[
        (\mathcal{P}(X), \cup, V),t\Vdash \varphi \quad \text{iff} \quad t\vDash \varphi.
    \]
Here, `$\vDash$' denotes the team-semantical satisfaction relation and `$\Vdash$' the Kripke-semantical satisfaction relation. This yields completeness of the modal semantics for team logic. 

\subsection{From worlds to teams} Conversely, for soundness, given a powerset frame $(\mathcal{P}(X), \cup)$ with principal valuation $V:\mathsf{Prop}\to \{\mathcal{P}(Y)\mid Y\subseteq X\}$, define a classical valuation $v_x:\mathsf{Prop}(\varphi)\to \{0,1\}$ for each $x\in X$ by
        \[
        v_x(p) = 
        \begin{cases}
            1 & \text{if } \{x\} \in V(p), \\
            0 & \text{otherwise.}
        \end{cases}
        \]

\begin{Proposition}
    For all $\varphi$ and $t\in \mathcal{P}(X)$,
    \[
        (\mathcal{P}(X), \cup, V),t\Vdash \varphi \quad \text{iff} \quad \{v_x\mid x\in t\}\vDash \varphi.
    \]
\end{Proposition}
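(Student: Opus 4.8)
The plan is to prove the biconditional by structural induction on $\varphi$, simultaneously for all teams $t \in \mathcal{P}(X)$. The statement relates Kripke-satisfaction in $(\mathcal{P}(X), \cup, V)$ with principal valuation $V$ to team-satisfaction under the induced classical valuations $\{v_x \mid x \in t\}$. The connectives $\lor$, $\vvee$, ${\sim}$ on the team side correspond clause-by-clause to $\circ$, $\lor$, $\neg$ on the modal side (as already observed in the completeness direction), so the inductive steps for these will be essentially bookkeeping: one unfolds the relevant semantic clause on each side and applies the induction hypothesis. Since $\circ$ is interpreted via $\cup$ (because $Rtyz$ iff $t = y \cup z$ in the frame $(\mathcal{P}(X), \cup)$), the split-disjunction case matches the team clause $t = t' \cup t''$ directly; conjunction $\land$ is handled by the usual Boolean reading; and ${\sim}/\neg$ is immediate from the recursive negation clauses.

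First I would set up the induction and dispatch the connective cases, which are routine given the clause-by-clause correspondence. The crux, and the only place the \emph{principality} of $V$ is used, is the atomic base case: I must show
\[
    (\mathcal{P}(X), \cup, V), t \Vdash p \quad \text{iff} \quad \{v_x \mid x \in t\} \vDash p.
\]
Unfolding the right-hand side via the team-semantic atomic clause, $\{v_x \mid x \in t\} \vDash p$ holds iff $v_x(p) = 1$ for every $x \in t$, i.e.\ iff $\{x\} \in V(p)$ for every $x \in t$. The left-hand side holds iff $t \in V(p)$. So the base case reduces to showing that $t \in V(p)$ iff $\{x\} \in V(p)$ for all $x \in t$.

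The hard part will be exactly this equivalence, and it is precisely where I expect to lean on the hypothesis that $V$ is \emph{principal}: $V(p) = \mathcal{P}(Y_p)$ for some $Y_p \subseteq X$. Given this, $t \in V(p)$ iff $t \subseteq Y_p$; and $\{x\} \in V(p)$ for all $x \in t$ iff $x \in Y_p$ for all $x \in t$, i.e.\ again iff $t \subseteq Y_p$. Thus both sides coincide, establishing the base case. (Note the empty team $\varnothing$ causes no trouble: $\varnothing \subseteq Y_p$ always, matching the vacuous universal quantification, and $\varnothing \in \mathcal{P}(Y_p)$.) Without principality this step would fail — an arbitrary $V(p) \subseteq \mathcal{P}(X)$ need not be downward closed, so membership of $t$ would not be recoverable from membership of its singletons — which is exactly the feature separating the decidable team semantics from the undecidable $\mathrm{Log}(\mathsf{Pow}_\cup)$. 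With the base case secured and the inductive connective cases following mechanically from the matching semantic clauses, the induction closes and yields soundness of the modal semantics for PTL.
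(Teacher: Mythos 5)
Your proof is correct, and its mathematical substance coincides with the paper's: the paper likewise isolates the atomic case, showing $t\Vdash p$ iff $\forall x\in t:\{x\}\Vdash p$ iff $\{v_x\mid x\in t\}\vDash p$, with the first biconditional resting on $V(p)=\mathcal{P}(Y)$ for some $Y\subseteq X$ --- exactly your $t\subseteq Y_p$ argument. Where you differ is in packaging: the paper then disposes of all connectives at once by observing that the map $t\mapsto \{v_x\mid x\in t\}$ is a p-morphism with respect to $\cup$ and invoking preservation of satisfaction, whereas you unfold that observation into an explicit structural induction. Your route is more self-contained; the paper's is shorter by citing standard invariance machinery. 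One caution: your description of the $\circ$/split-disjunction case as ``essentially bookkeeping'' undersells the one direction that needs an actual (if short) argument --- precisely the back condition of the paper's p-morphism. If $\{v_x\mid x\in t\}=s'\cup s''$ with $s'\vDash\varphi$ and $s''\vDash\psi$, the induction hypothesis does not apply to $s'$ and $s''$ as they stand; you must first realize them as images of subsets of $t$, e.g.\ $t'\mathrel{:=}\{x\in t\mid v_x\in s'\}$ and $t''\mathrel{:=}\{x\in t\mid v_x\in s''\}$, verify $t=t'\cup t''$ and $\{v_x\mid x\in t'\}=s'$ (using $s'\subseteq\{v_x\mid x\in t\}$; the map $x\mapsto v_x$ need not be injective, which is harmless here), and only then apply the induction hypothesis at $t'$ and $t''$. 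With that two-line repair the induction closes, and your identification of principality as the sole load-bearing hypothesis --- used exactly once, at the atoms --- is exactly the paper's point.
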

\begin{proof}
    Observe that 
    \[
        t\Vdash p \quad \text{iff} \quad \forall x\in t:\{x\}\Vdash p \quad \text{iff} \quad \forall x\in t:v_x(p)=1\quad \text{iff} \quad \{v_x\mid x\in t\}\vDash p,
    \]
    where the first biconditional follows from $V(p)=\mathcal{P}(Y)$ for some $Y\subseteq X$.
    Using this, it is straightforward to see that the map $t\mapsto \{v_x\mid x\in t\}$ constitutes a p-morphism (w.r.t. $\cup$), so the proposition follows.
\end{proof}
This yields soundness, thereby providing the promised modal semantics for PTL in terms of powerset frames with principal valuations. The decidability of PTL thus contrasts with the undecidability of $\mathrm{Log}(\mathsf{Pow}_\cup)$, where valuations need not be principal.

\end{document}